\newtheorem{thm}{Theorem}[section]
\numberwithin{equation}{section}
\newtheorem{lemma}[thm]{Lemma}
\newtheorem{prop}[thm]{Proposition}
\newtheorem{defin}[thm]{Definition}
\newtheorem{cor}[thm]{Corollary}
\newtheorem{question}[thm]{Question}
\newcommand{\T}{{\mathbb T}}
\newcommand{\N}{{\mathbb N}}
\newcommand{\R}{{\mathbb R}}
\newcommand{\Z}{{\mathbb Z}}
\newcommand{\C}{{\mathbb C}}
\title[Random waves on $\T^3$]{Random waves on $\T^3$: nodal area variance and lattice point correlations}
\thanks{The research leading to these results has received funding from the European Research Council under the European Union's Seventh Framework Programme (FP7/2007-2013), ERC grant agreement n$^{\text{o}}$ 335141}
\author{{Jacques Benatar \hspace{1.5cm}    Riccardo W. Maffucci}}
\address{ Department of Mathematics, King's College London, WC2R 2LS, UK.}
\email{jacques.benatar@kcl.ac.uk}
\address{ Department of Mathematics, King's College London, WC2R 2LS, UK.}
\email{riccardo.maffucci@kcl.ac.uk}
\begin{document}
%\titleformat{\section}
%  {\normalfont\scshape\centering\bf}{\thesection}{1em}{}
%\titleformat{\subsection}
%  {\normalfont\scshape\bf}{\thesubsection}{1em}{}
%\titleformat{\subsubsection}
%  {\small\scshape\bf}{\thesubsubsection}{1em}{}
\numberwithin{equation}{section}

\begin{abstract}
We consider the ensemble of random Gaussian Laplace eigenfunctions on $\mathbb{T}^3=\mathbb{R}^3/\mathbb{Z}^3$ (`$3d$ arithmetic random waves'), and study the distribution of their nodal surface area. The expected area is proportional to the square root of the eigenvalue, or `energy', of the eigenfunction. We show that the nodal area variance obeys an asymptotic law. The resulting asymptotic formula is closely related to the angular distribution and correlations of lattice points lying on spheres.
\end{abstract}
\maketitle
\noindent
{\bf Keywords:} nodal area, arithmetic random waves, Gaussian eigenfunctions, lattice point correlations.
\\
{\bf MSC(2010):} 11P21, 60G15.
\tableofcontents

\section{Introduction}
\subsection{Nodal volume of toral eigenfunctions}
Let $(\mathcal{M},g)$ be a $d$-dimensional compact Riemannian manifold and let $\Delta$ denote the Laplace-Beltrami operator on $\mathcal{M}$. We consider real-valued functions $F:\mathcal{M} \to\mathbb{R}$ satisfying the differential equation
\begin{equation}
\label{laplace}
(\Delta+E)F=0, \quad E>0.
\end{equation}
%where
%\begin{equation*}
%$
%\Delta=\frac{\partial^2}{\partial x_1^2}+\frac{\partial^2}{\partial x_2^2}+\frac{\partial^2}{\partial x_3^2}
%$
%\end{equation*}
%is the Laplace operator on $\mathbb{T}^3$. 
The nodal set of $F$ is the zero locus
\begin{equation}
\label{nodalsurf}
\{x\in\mathcal{M}: F(x)=0\}.
\end{equation}
It was proven by Cheng \cite{cheng1} that the nodal sets of Laplace eigenfunctions on $\mathcal{M}$  are smooth hypersurfaces, except for a subset of lower dimension. 
%In particular, except for a subset of lower dimension, the nodal set \eqref{nodalsurf} is a smooth surface,
We let
$$\mathcal{V}:=\text{Vol}(\{x\in\mathcal{M} : F(x)=0\})$$
denote the {\bf nodal volume} of $F$.

A fundamental conjecture of Yau \cite{yau982}, \cite{yau993} asserts that, for smooth $\mathcal{M}$, the ($(d-1)$- dimensional) nodal volume of a Laplace eigenfunction $F$ with eigenvalue $E$ obeys the sharp bounds 
\begin{equation}
\label{yau}
\sqrt{E}\ll_{\mathcal{M}} \mathcal{V} \ll_{\mathcal{M}}\sqrt{E}.
\end{equation}
This conjecture was established for manifolds $\mathcal{M}$ with a real analytic metric (see Donnelly and Fefferman \cite{donfef}, and Br\"uning and Gromes \cite{brunin}, \cite{brugro}), thus in particular it holds for the torus $\T^d=\R^d/\Z^d$. The lower bound in Yau's conjecture was proven for general smooth $\mathcal{M}$  by Logunov \cite{loguno}.

For $\mathcal{M}=\mathbb{T}^{d}$, the eigenspaces of the Laplacian are related to the theory of lattice points on $(d-1)$-dimensional spheres $\sqrt{m}\mathcal{S}^{d-1}$. Our main focus is the $3$-dimensional torus $\T^3$; in this setting we will call  \eqref{nodalsurf} a {\em nodal surface} and write
\begin{equation}
\label{nodalvolume}
\mathcal{A}:=\text{Vol}(\{x\in\T^3 : F(x)=0\})
\end{equation}
for the {\bf nodal area} of $F$.

The sequence of eigenvalues, or {\em energy levels}, of \eqref{laplace} is
\begin{equation*}
\{E_m=4\pi^2m\}_{m\in V^{(3)}},
\end{equation*}
where
\begin{equation*}
V^{(3)}:=\{m: \ 0<m=a^2+b^2+c^2,\ a,b,c\in\mathbb{Z}\}.
\end{equation*}
For $m\in V^{(3)}$, let
\begin{equation}
\label{lpset}
\mathcal{E}=\mathcal{E}(m,3):=\{\mu\in\mathbb{Z}^3 : \|\mu\|^2=m\}
\end{equation}
be the set of all lattice points on the sphere of radius $\sqrt{m}$. Their cardinality is the number of ways that $m$ may be written as a sum of three squares, and will be denoted by
\begin{equation*}
\mathcal{N}=\mathcal{N}_m:=|\mathcal{E}|
\end{equation*}
(one often writes $r_3(m)$).
%It is well-known \cite{harwri} that $r_3(m)\neq 0$ if and only if $m\neq 4^l(8k+7)$.
Given an eigenvalue $E=4\pi^2 m$, its eigenspace has dimension $\mathcal{N}$, and admits the $L^2$-orthonormal basis
\begin{equation}
\label{basis}
\{e(\mu\cdot x)\}_{\mu\in\mathcal{E}},
%\{e^{2\pi i\langle\mu,x\rangle}\}_{\mu\in\mathcal{E}}:
\end{equation}
where $e(z):=e^{2 \pi i z}$. All the corresponding eigenfunctions may be written as linear combinations of these exponentials.
%If the manifold is the torus, the eigenspaces of the Laplacian are related to the theory of lattice points on spheres. The nodal set on a three dimensional torus is a smooth surface: we want to study the volume (area) of this surface. Let  be a real-valued eigenfunction of the Laplacian on the three dimensional torus $\mathbb{T}^3=\mathbb{R}^3/\mathbb{Z}^3$, with eigenvalue $\lambda^2$:

\subsection{Arithmetic random waves}
\label{model}
Let us consider the {\em random Gaussian} toral Laplace eigenfunctions (`arithmetic random waves' \cite{orruwi}, \cite{rudwi2}, \cite{krkuwi})
\begin{equation}
\label{arw}
F(x)=\frac{1}{\sqrt{\mathcal{N}}}
\sum_{\mu\in\mathcal{E}}
a_{\mu}
e(\mu\cdot x)
\end{equation}
with eigenvalue $E=4\pi^2m$, where $a_{\mu}$ are complex standard Gaussian random variables (meaning that $\mathbb{E}(a_{\mu})=0$ and $\mathbb{E}(|a_{\mu}|^2)=1$), independent except for the relations $a_{-\mu}=\overline{a_{\mu}}$, which make $F(x)$ real-valued. The arithmetic random wave \eqref{arw} is thus a (centred) Gaussian random field, and the nodal area \eqref{nodalvolume} is a random variable associated to $F$.
%\\
%For every random field $G:U\subseteq\mathbb{R}^n\to\mathbb{R}$, we may define its {\em covariance function} $r_G:U\times U\to\mathbb{R}$,
%\begin{equation*}
%r_G(x,y):=\mathbb{E}[G(y)\cdot G(x+y)]
%\end{equation*}
%which is non-negative definite (see \cite[\S 5.1]{cralea}). In case $r_G$ depends on $x$ only, we say that $G$ is {\em stationary}.
\\
%The random wave
The Gaussian field $F$ is {\em stationary}, in the sense that its {\em covariance function} $r$ depends on $x$ only:
%The  of $F$ is defined as
%$r:\mathbb{T}^3\times\mathbb{T}^3\to\mathbb{R}$,
\begin{equation}
\label{r}
r(x):=
%r_F(x,y)=
\mathbb{E}[F(y)\cdot F(x+y)]
=
\frac{1}{\mathcal{N}}\sum_{\mu\in\mathcal{E}} e(\mu\cdot x).
\end{equation}
%We will henceforth write $r(x)$ by the well-accepted abuse of notation.
%The field $F$ as in \eqref{arw} is stationary, with covariance function 
%which is of arithmetic nature.
%\\
Every (centred) Gaussian random field is determined by its covariance function via Kolmogorov's Theorem (see e.g. \cite[\S 3.3]{cralea}): the arithmetic random waves are independent of the choice of particular orthonormal basis \eqref{basis}. The normalising factor $1/\sqrt{\mathcal{N}}$ in \eqref{arw}, which clearly has no impact on the nodal area, is chosen so that $F$ is unit-variance (i.e. $r(0)=1$).

\subsection{Prior work on this model}
%In particular, we seek asymptotics for the variance of the volume, in the high energy limit.
%With the setting we have introduced, we are interested in the random variable
%\begin{equation*}
%Vol(\{x\in\mathbb{T}^3 : F_m(x)=0\}),
%\qquad m\to\infty.
%\end{equation*}
Let us consider the general setting of arithmetic random waves in dimension $d\geq 1$.
\begin{defin}
\label{lpsetd}
For $d\geq 1$, we denote
\begin{equation}
\mathcal{E}(m,d):=\{\mu\in\mathbb{Z}^d : \|\mu\|^2=m\}
\end{equation}
the set of all lattice points on $\sqrt{m}\mathcal{S}^{d-1}$, and $\mathcal{N}_m^{(d)}$ their number.
\end{defin}
If the index $d$ is omitted it is understood that $d=3$. On $\mathbb{T}^d$, the arithmetic random waves have the expression $F:\mathbb{T}^d\to\mathbb{R}$,
\begin{equation}
\label{arwd}
F(x)=\frac{1}{\sqrt{\mathcal{N}_m^{(d)}}}
\sum_{\mu\in\mathcal{E}(m,d)}
a_{\mu}
e(\mu\cdot x),
\end{equation}
where $m$ is the sum of $d$ integer squares, and $a_{\mu}$ are complex standard Gaussian random variables, independent save for the relations $a_{-\mu}=\overline{a_{\mu}}$, which make $F(x)$ real-valued.

Rudnick and Wigman \cite{rudwi2} investigated the $d-1$-dimensional nodal volume of arithmetic random waves on $\mathbb{T}^d$. They computed the expected value to be, for $d\geq 1$, 
\begin{equation}
\label{rw2008}
\mathbb{E}[\mathcal{V}]=\mathcal{I}_d\sqrt{m},
\end{equation}
where
\begin{equation*} \mathcal{I}_d=\sqrt{\frac{4\pi}{d}}\cdot\frac{\Gamma(\frac{d+1}{2})}{\Gamma(\frac{d}{2})}
\end{equation*}
(see \cite[Proposition 4.1]{rudwi2}). Note that the order of magnitude of \eqref{rw2008} agrees with Yau's conjecture \eqref{yau}. They also gave the following bound for the variance: for $d\geq 2$,
\begin{equation}
\label{rwsqrt}
Var(\mathcal{V})\ll\frac{m}{\sqrt{\mathcal{N}}} \qquad \text{as } \mathcal{N}\to\infty
\end{equation}
(see \cite[Proposition 6.1]{rudwi2}).
%They also conjectured the correct upper bound for the variance to be
%\begin{equation*}
%Var(Vol)\ll\frac{m}{N}.
%\end{equation*}
As a consequence, the nodal volume concentrates around its mean (see \cite[Section 1.2]{krkuwi}). Rudnick and Wigman (\cite[Section 1]{rudwi2}) conjectured that the stronger bound
\begin{equation}
\label{rwone}
Var(\mathcal{V})\ll\frac{m}{\mathcal{N}}
\end{equation}
should hold.
\\
A deep result of Krishnapur, Kurlberg and Wigman \cite{krkuwi} is
%the leading order asymptotic
the precise asymptotic behaviour of the variance on $\mathbb{T}^2$ (here the volume is the length of the nodal lines). The energy levels for $d=2$ are the numbers expressible as a sum of two squares 
\begin{equation*}
V^{(2)}=\{m: \ 0<m=a^2+b^2, a,b\in\mathbb{Z}\}.
\end{equation*}
For any subsequence of energies $\{m_i\}_i\subset{V^{(2)}}$ such that the multiplicities $\mathcal{N}_{m_i}\to\infty$, it was shown that (cf. \cite[Theorem 1.1]{krkuwi})
\begin{equation}
\label{length}
Var(\mathcal{V})=c_{m_i}\frac{m_i}{\mathcal{N}_{m_i}^2}(1+o(1)),
\end{equation}
where the positive real numbers $c_{m_i}$ depend on the angular distribution of $\mathcal{E}(m_i,2)$ - the asymptotics for the variance are {\em non-universal} (see \cite[Section 1.2]{krkuwi}).
\\
Also remarkably, the order of magnitude of \eqref{length} is much smaller than expected \eqref{rwone}, as the terms of order $\frac{m}{\mathcal{N}}$ in the asymptotic expression for the nodal length variance cancel perfectly; this effect was called {\em arithmetic Berry cancellation} in \cite{krkuwi}, after ``Berry's cancellation phenomenon" \cite{berry2}.

%The approach of \cite{krkuwi} will be used. This will involve the associated random field, which is stationary and centred Gaussian; a version of the Kac-Rice formula; study of the moments of the covariance function; local statistics of lattice points on spheres.

\subsection{Statement of main results}
The variance of the nodal area has the following precise asymptotics.
%leading order
\begin{thm}
\label{thethm}
As $m\to\infty$, $m\not\equiv 0,4,7 \pmod 8$, we have
\begin{equation*}
Var(\mathcal{A})=\frac{m}{\mathcal{N}^2}\cdot\left[\frac{32}{375}+O\left(\frac{1}{\mathcal{N}^{1/14-o(1)}}\right)\right].
\end{equation*}
\end{thm}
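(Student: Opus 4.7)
My plan is to follow the Wiener chaos strategy used for the nodal length in \cite{krkuwi}, adapted from $\T^2$ to $\T^3$.

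\textbf{Kac--Rice and chaos decomposition.} First I would use the Kac--Rice formula (via a regularisation of the Dirac mass) to write
$$\mathcal{A} \;=\; \int_{\T^3} \|\nabla F(x)\| \, \delta(F(x)) \, dx,$$
and expand the integrand into Hermite polynomials in the Gaussian vector $(F(x), \nabla F(x))$. Since the Gaussian Hilbert space generated by $F$ is the one spanned by the coefficients $a_\mu$, this yields the $L^2(\Omega)$-orthogonal decomposition
$$\mathcal{A} - \E[\mathcal{A}] \;=\; \sum_{q \ge 1} \mathcal{A}[q], \qquad Var(\mathcal{A}) \;=\; \sum_{q \ge 1} Var(\mathcal{A}[q]),$$
where $\mathcal{A}[q]$ is the projection of $\mathcal{A}$ onto the $q$-th Wiener chaos. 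Odd chaoses vanish by the symmetry $F \leftrightarrow -F$, so the sum runs over even $q \ge 2$.

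\textbf{Cancellation of the second chaos.} Next I would compute $\mathcal{A}[2]$ explicitly using the Hermite coefficients of $\|v\|$ for a standard Gaussian vector $v \in \R^3$ and of $\delta$. Passing to the spectral side via the Fourier expansion of $F$ reduces $\mathcal{A}[2]$ to a quadratic form in the $a_\mu$ whose coefficients are sums of the quantities $\mu_i \mu_j / m$ against the Hermite weights. I expect the arithmetic Berry cancellation to persist on $\T^3$: the contribution of $H_2(F)$ cancels against the trace of the ``gradient part'', because $\frac{1}{\mathcal{N}} \sum_{\mu \in \mathcal{E}} \mu_i \mu_j / m = \frac{1}{3} \delta_{ij}$ for any $m$ with lattice-point symmetries of $\mathcal{E}$.

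\textbf{Leading order from the fourth chaos.} The bulk of the variance then comes from $\mathcal{A}[4]$. Spectrally,
$$Var(\mathcal{A}[4]) \;=\; \frac{1}{\mathcal{N}^{\,4}} \sum_{\substack{\mu_1, \mu_2, \mu_3, \mu_4 \in \mathcal{E} \\ \mu_1 + \mu_2 = \mu_3 + \mu_4}} G(\mu_1, \mu_2, \mu_3, \mu_4),$$
for an explicit polynomial weight $G$ built from the Hermite coefficients (a product of factors $\mu_i\cdot\mu_j/m$). I would split the sum into a \emph{diagonal} part $\{\mu_1, \mu_2\} = \{\mu_3, \mu_4\}$ and a \emph{non-diagonal} one. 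The diagonal contribution simplifies to $\frac{m}{\mathcal{N}^2}$ times an empirical average over pairs $(\mu_1, \mu_2)/\sqrt m \in \mathcal{S}^2 \times \mathcal{S}^2$ of a fixed polynomial. By Duke's equidistribution theorem for lattice points on $\sqrt m \mathcal{S}^2$ (which is why we impose $m \not\equiv 0,4,7 \pmod 8$, so that $\mathcal{N}_m \to \infty$ through primitive representations), this empirical average converges to the integral against the uniform measure on $\mathcal{S}^2 \times \mathcal{S}^2$, which a direct computation using $\int_{\mathcal{S}^2} u_i u_j \, d\sigma = \frac{1}{3}\delta_{ij}$ and its fourth-moment analogue should evaluate to $32/375$.

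\textbf{Error terms and main obstacle.} The main difficulty is controlling (i) the non-diagonal part of $Var(\mathcal{A}[4])$, which reduces to counting non-degenerate $4$-tuples of lattice points on $\sqrt m \mathcal{S}^2$ with $\mu_1 + \mu_2 = \mu_3 + \mu_4$, and (ii) the contribution of $\mathcal{A}[q]$ for $q \ge 6$. For (i) I would appeal to Bourgain--Rudnick-type bounds on spectral $4$-correlations on $3$-dimensional spheres, combined with an effective rate for Duke's theorem (which should yield the savings of the form $\mathcal{N}^{-1/14 + o(1)}$ quoted in the statement). For (ii), Cauchy--Schwarz together with the crude Rudnick--Wigman bound \eqref{rwsqrt} or a more careful chaos-by-chaos estimate should absorb the higher chaoses into the same error. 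The heart of the proof is therefore arithmetic: quantitative equidistribution and correlation bounds for $\mathcal{E}(m,3)$.
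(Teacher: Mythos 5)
Your route is genuinely different from the paper's. The paper does not project onto Wiener chaoses at all: it proves Proposition~\ref{theprop} directly from the Kac--Rice formula, via a Berry-style Taylor expansion of the scaled two-point correlation function $K_2$ as a function of the covariance matrix $\Omega$ about $I_6$ (Appendix~\ref{Berry's}), a singular-set estimate (Proposition~\ref{singset}), and a term-by-term integration over $\T^3$ (Lemmas~\ref{lemma4.6} and~\ref{lemma5.4}); Theorem~\ref{thethm} then follows by inserting the correlation bounds of Theorems~\ref{main4} and~\ref{main6}. Your chaos decomposition is the alternative route carried out in the companion paper \cite{BCMW}, which the authors cite. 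Your structural points — odd chaoses vanish, second chaos cancels by the arithmetic Berry mechanism, the fourth chaos gives $32/375$ via Duke equidistribution with rate $m^{-1/28+o(1)}=\mathcal{N}^{-1/14+o(1)}$ — are all correct and line up with the paper's $B_2=\tfrac13$, $B_4=\tfrac15$ computation in Lemma~\ref{innerk}.

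There is, however, a real gap in how you propose to close the error terms, and it sits exactly where the arithmetic weight of the whole problem lies. First, the non-diagonal fourth-chaos contribution needs a power-saving bound for the non-degenerate $4$-correlations: the trivial bound $|\mathcal{X}_m^{(3)}(4)| \ll \mathcal{N}^{2+o(1)}$ (via Jarn\'ik, \eqref{taurep}) gives only $|\mathcal{X}(4)|/\mathcal{N}^2 \ll \mathcal{N}^{o(1)}$, which is not $o(1)$, let alone $\mathcal{N}^{-1/14}$. There is no off-the-shelf ``Bourgain--Rudnick-type'' bound here in dimension $3$; the needed estimate is Theorem~\ref{main4}, $|\mathcal{X}_m(4)|\ll\mathcal{N}^{7/4+o(1)}$, which is a new result proved in Section~\ref{foursection} via Pall's exact formula and a divisor/dyadic argument. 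Second, your option of closing (ii) with the crude Rudnick--Wigman bound $Var(\mathcal{V})\ll m/\sqrt{\mathcal{N}}$ cannot work: a bound on the {\em total} variance, which is worse than the conjectured main term $m/\mathcal{N}^2$ by a large power of $\mathcal{N}$, gives no information about the tail $\sum_{q\ge 6}Var(\mathcal{A}[q])$. What actually controls this tail is the sixth-moment bound $|\mathcal{C}_m^{(3)}(6)| \ll \mathcal{N}^{4-\delta}$ for some $\delta>0$; the trivial Jarn\'ik bound gives only $\mathcal{N}^{4+o(1)}$, and the improvement $\mathcal{N}^{11/3+o(1)}$ (Theorem~\ref{main6}) comes from the incidence theorem of Fox--Pach--Sheffer--Suk--Zahl \cite{Zar}, playing in $3$D the role Szemer\'edi--Trotter plays for \cite{bombou} in $2$D. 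Both of these correlation bounds would have to be proved (or cited) before your chaos argument closes; they are not cosmetic inputs but the analytic heart of the paper.
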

The $3$-dimensional torus exhibits arithmetic Berry cancellation like the $2$-dimensional torus; the variance has the same order of magnitude as \eqref{length}. See Section \ref{aBc} for more details.
\\
We also remark that, unlike the $2$-dimensional case, the leading order term does not fluctuate: this is because lattice points on spheres are {\em
equidistributed} (see Section \ref{spheres}).
\\
We impose the natural condition $m\not\equiv 0,4,7 \pmod 8$ (cf. \cite[Section 1.3]{ruwiye} and \cite[Section 1.2]{maff3d}) which implies $\mathcal{N}\to\infty$ (see Section \ref{prelim}). Indeed, if $m\equiv 7 \pmod 8$, the set of lattice points $\mathcal{E}(m,3)$ \eqref{lpset} is empty and
\begin{equation*}
\mathcal{E}(4m,3)=\{2\mu : \mu\in\mathcal{E}(m,3)\}
\end{equation*}
(see e.g. \cite[\S 20]{harwri}), hence it suffices to consider energies $m \not \equiv 0 \bmod 4$.

%We first define the necessary notation for the statement.
%We introduce an arithmetic problem that arises naturally in the computation of Var($\mathcal{A}$).
\begin{defin}
\label{correl}
%Let $\mathcal{E}(m,d)$, $\mathcal{N}_m^{(d)}$ be as in Definition \ref{lpsetd}.
For $\ell\geq 2$, the {\em set of $d$-dimensional $\ell$-th lattice point correlations}, or $\mathbf{\ell}$-{\bf correlations} for short, is
\begin{equation*}
\mathcal{C}_m^{(d)}(\ell):=\Big\{(\mu_1,\dots,\mu_\ell)\in\mathcal{E}(m,d)^{\ell} :
\sum_{i=1}^{\ell}\mu_i=0 %\mu^{(1)}+\dots+\mu^{(k)}=0
\Big\}.
\end{equation*}
The set of {\bf non-degenerate} $\ell$-correlations is
\begin{equation*}
\mathcal{X}_m^{(d)}(\ell):=\Big\{(\mu_1,\dots,\mu_\ell)\in\mathcal{C}_m^{(d)}(\ell) :
%\text{ and no proper subsum vanishes}
\forall\mathcal{H}\subsetneq\{1,\dots,\ell\}, \sum_{i\in\mathcal{H}}\mu_i\neq 0\Big\}.
\end{equation*}
\end{defin}

We note that, for even $\ell$, the set of $\ell$-correlations is related to the $\ell$-th moment of the covariance function \eqref{r} as follows:
\begin{equation*}
\mathcal{R}(\ell)=\frac{|\mathcal{C}_m^{(d)}(\ell)|}{\mathcal{N}^\ell},
\end{equation*}
where we define
\begin{equation}
\label{rk}
\mathcal{R}(\ell)=\mathcal{R}_m^{(d)}(\ell):=\int_{\mathbb{T}^d}|r^\ell(x)|dx.
\end{equation}
To prove Theorem \ref{thethm}, we shall require the following arithmetic formula.
%leading order
\begin{prop}[Arithmetic formula]
\label{theprop}
As $m\to\infty$, $m\not\equiv 0,4,7 \pmod 8$, we have
\begin{equation*}
Var(\mathcal{A})=\frac{m}{\mathcal{N}^2}\cdot\left[\frac{32}{375}+O\left(\frac{1}{\mathcal{N}^{1/14-o(1)}}+\frac{|\mathcal{X}(4)|}{\mathcal{N}^2}+\frac{|\mathcal{C}(6)|}{\mathcal{N}^4}\right)\right].
\end{equation*}
\end{prop}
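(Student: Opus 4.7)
The plan is to attack $\mathrm{Var}(\mathcal{A})$ via the Wiener chaos decomposition, following the methodology of Krishnapur--Kurlberg--Wigman in the $2$-dimensional setting. First I would replace $\mathcal{A}$ by its standard Kac--Rice $\varepsilon$-approximation
\[
\mathcal{A}_\varepsilon=\frac{1}{2\varepsilon}\int_{\T^3} \mathbf{1}_{[-\varepsilon,\varepsilon]}(F(x))\,\|\nabla F(x)\|\,dx,
\]
and then expand the integrand in Hermite polynomials in the standard Gaussian vector $(F(x),\nabla F(x)/\sqrt{E/3})$. Passing $\varepsilon\to 0$ and using the isometry of Wiener chaos yields $\mathcal{A}=\sum_{q\ge 0}\mathcal{A}[q]$ with $\mathrm{Var}(\mathcal{A})=\sum_{q\ge 1}\mathrm{Var}(\mathcal{A}[q])$. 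Parity of the integrand kills all odd chaoses, so only even $q$ survive.

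The core of the argument is the analysis of the low chaoses. For the $2$-nd chaos I expect the explicit computation (as in \cite{krkuwi}) to reveal that $\mathcal{A}[2]$ is a \emph{deterministic} linear combination of the basic quadratic random variables, so that $\mathrm{Var}(\mathcal{A}[2])=0$; this is the arithmetic Berry cancellation on $\T^3$ announced in the introduction. For the $4$-th chaos, expanding the relevant Hermite projections and expressing each summand via \eqref{r} and its derivatives, an application of Parseval reduces $\mathrm{Var}(\mathcal{A}[4])$ to a weighted sum over $\mathcal{C}(4)$ of rational functions of the components of $(\mu_1,\mu_2,\mu_3,\mu_4)\in\mathcal{E}^4$. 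Splitting $\mathcal{C}(4)$ into the three ``diagonal'' pairings (contributing $\approx 3\mathcal{N}^2$ tuples) and the non-degenerate part $\mathcal{X}(4)$, one evaluates the diagonal contribution using the fact that lattice points on spheres equidistribute in the sense of Section \ref{spheres}, giving the main term $\frac{32}{375}\cdot m/\mathcal{N}^2$ together with an error of order $(m/\mathcal{N}^2)\cdot|\mathcal{X}(4)|/\mathcal{N}^2$.

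For the $6$-th chaos, the same Parseval-style reduction expresses $\mathrm{Var}(\mathcal{A}[6])$ as a sum over $\mathcal{C}(6)$ of weights uniformly bounded by $O(1)$, giving $\mathrm{Var}(\mathcal{A}[6])\ll (m/\mathcal{N}^2)\cdot|\mathcal{C}(6)|/\mathcal{N}^4$, which produces the third error term in the statement. For the tail $\sum_{q\ge 8}\mathrm{Var}(\mathcal{A}[q])$, one uses a Hermite expansion bound of the form
\[
\sum_{q\ge 8}\mathrm{Var}(\mathcal{A}[q])\ll m\cdot\mathcal{R}(8),
\]
after absorbing derivatives of $r$ via the Cauchy--Schwarz inequality. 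The arithmetic input then takes the shape $\mathcal{R}(8)=|\mathcal{C}(8)|/\mathcal{N}^8$, and one finally invokes a power-saving Diophantine bound on $|\mathcal{C}(8)|$, yielding the $1/\mathcal{N}^{1/14-o(1)}$ term once the factor $m/\mathcal{N}^2$ is factored out.

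The main obstacle is twofold. First, in the $4$-th chaos computation the precise identification of the constant $32/375$ requires one to keep track of several combinatorial and analytic contributions simultaneously (Hermite coefficients, the matrix $\mathbb{E}[\nabla F\otimes\nabla F]=(E/3)\,\mathrm{I}_3$, and the equidistribution averages of products like $\mu_i\mu_j/m$), and it is important that the naive $m/\mathcal{N}$ terms cancel out so that only $m/\mathcal{N}^2$ survives. Second, and most delicate, is the tail estimate: controlling $|\mathcal{C}(8)|$ by $\mathcal{N}^{8-1/14+o(1)}$ demands an off-diagonal estimate for eight-fold lattice point correlations on $\sqrt{m}\mathcal{S}^2$, which is the place where the numerical exponent $1/14$ enters and where genuinely new arithmetic input (beyond what suffices for $\T^2$) is required.
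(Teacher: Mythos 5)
Your route via the Wiener chaos decomposition (as in the companion paper \cite{BCMW}) is genuinely different from the one taken here: the paper stays entirely within the Kac--Rice framework, first reducing $Var(\mathcal{A})$ to $\frac{E}{3}\int_{\T^3}(K_2-4/\pi^2)\,dx$ (Proposition \ref{precisevar}), then Taylor-expanding $K_2$ around $\Omega=I_6$ via Berry's method (Lemma \ref{lemma5.1}, Appendix \ref{Berry's}) on the complement of a small singular set $S$, and finally integrating the resulting polynomial in $r, D, H$ term-by-term (Lemmas \ref{lemma4.6} and \ref{lemma5.4}). The chaos route is viable and structurally parallel -- the $2$nd chaos is deterministic (arithmetic Berry cancellation), the $4$th chaos supplies the leading constant, and the $6$th and higher chaoses lie in the error -- but your account has a concrete gap in where the $\mathcal{N}^{-1/14+o(1)}$ term comes from.

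You attribute $\mathcal{N}^{-1/14+o(1)}$ to a tail bound $\sum_{q\ge 8}Var(\mathcal{A}[q])\ll m\,\mathcal{R}(8)$ and then a Diophantine estimate on $|\mathcal{C}(8)|$. That bookkeeping does not close: Corollary \ref{maincorro} already gives $|\mathcal{C}(8)|\ll\mathcal{N}^{17/3+o(1)}$, so $m\,\mathcal{R}(8)\ll (m/\mathcal{N}^2)\cdot\mathcal{N}^{-1/3+o(1)}$, which is \emph{smaller} than the claimed error and hence cannot be its source. In the paper's argument the $\mathcal{N}^{-1/14+o(1)}$ arises not from any long-correlation count but from the equidistribution of lattice points on $\mathcal{S}^2$ (Duke's theorem, entering via Lemma \ref{equidlemma}): the moments $B_k$ of $\mu_1\cdot\mu_2/m$ are only computed with error $O(m^{-1/28+o(1)})$ (Lemma \ref{innerk}), and since $\mathcal{N}\asymp m^{1/2+o(1)}$ this becomes $\mathcal{N}^{-1/14+o(1)}$ when expressed in $\mathcal{N}$. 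This error enters precisely the evaluation of the leading constant $32/375$ (Lemma \ref{lemma4.6}, parts 5--6, through $tr(H^4)$ and $tr(H^2)^2$ via $B_4$). In your framework it would appear in the \emph{main-term} computation for the $4$th chaos, where you average $\cos^4(\varphi_{\mu_1,\mu_2})$ over pairs in $\mathcal{E}$; you cannot get a clean $32/375$ without carrying a Duke-type error along, and your current description omits it. Once you correct that attribution -- putting the $\mathcal{N}^{-1/14}$ alongside the $4$th chaos evaluation rather than the chaos tail -- the plan lines up with the expected structure of the argument.
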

%Both the main term and error term in Proposition \ref{theprop} are of arithmetic nature, as in the $2$-dimensional case \cite[(16)]{krkuwi}: the leading term and the first of the three error terms depend on the fourth moment of cosine of the angle between two lattice points (see Section \ref{pflemma4.6} for full discussion), while the remaining two error terms depend on the lattice point correlations of Definition \ref{correl}.
We are naturally led to the following arithmetic problem:
\\
\begin{question} 
How big are the sets $\mathcal{C}_m^{(d)}(\ell)$ and $\mathcal{X}_m^{(d)}(\ell)$ ?
\end{question}
Firstly, it is easy to see that, for every $d$,
\begin{equation}
\label{C2}
|\mathcal{C}_m^{(d)}(2)|={\mathcal{N}_m^{(d)}}.
\end{equation}

The case $d=2$ of this problem was studied in detail by Bombieri and Bourgain \cite{bombou}. We highlight two implications of their results since they are relevant to our own investigations. First, one has the unconditional bound 
\begin{equation*}
|\mathcal{C}_m^{(2)}(6)|=O(\mathcal{N}^{7/2}) \qquad \text{as } \mathcal{N}\to\infty,
\end{equation*}
which is proven via the Szemer\'edi-Trotter Theorem (see \cite[Section 2]{bombou}). The second result concerns the even length correlations, where the number of tuples with pairwise vanishing component vectors is of the order $\mathcal{N}^{ \ell/2}$: for a density $1$ sequence of energy levels $\{m\}$, it follows from \cite[Theorem 17]{bombou} (see also \cite[Lemma 4]{B}), that these tuples make up the majority of the set $\mathcal{C}_m^{(2)}(\ell)$.

Our next two theorems deal with the $3$-dimensional setting. We provide an estimate for the number of correlations and show that, for even $\ell \geq 8$,  the non-degenerate tuples dominate those that cancel pairwise.  

\begin{thm}\label{main4}
Letting $m \rightarrow \infty$, one has the estimate
\begin{equation}\label{fourcorr}
|\mathcal{X}_m^{(3)}(4)|\ll \mathcal{N}^{7/4+ o(1)}. 
\end{equation}
\end{thm}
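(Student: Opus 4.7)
For each $v\in\mathbb{Z}^3$ set
$$r(v):=\#\{(\mu,\mu')\in\mathcal{E}^2:\mu+\mu'=v\}.$$
The plan is first to rewrite $|\mathcal{X}_m^{(3)}(4)|$ as a ``non-trivial'' additive energy of $\mathcal{E}$. Fix an ordered pair $(\mu_1,\mu_2)$ with $\mu_1+\mu_2=v\neq 0$; given the zero-sum constraint, the six non-degeneracy conditions $\mu_i+\mu_j\neq 0$ collapse to $\mu_3\notin\{-\mu_1,-\mu_2\}$, with $\mu_4=-v-\mu_3$ then determined. Since the symmetry $\mathcal{E}=-\mathcal{E}$ yields $r(-v)=r(v)$, summing over ordered pairs produces
$$|\mathcal{X}_m^{(3)}(4)|=\sum_{v\neq 0}r(v)\bigl(r(v)-2\bigr)+O(\mathcal{N}^{1+o(1)})=\sum_{v\neq 0}r(v)^2-2\mathcal{N}^2+O(\mathcal{N}^{1+o(1)}),$$
so the statement is equivalent to the non-trivial additive energy bound $\sum_{v\neq 0}r(v)^2-2\mathcal{N}^2\ll\mathcal{N}^{7/4+o(1)}$.

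Next I would interpret $r(v)$ geometrically. The relations $\mu+\mu'=v$ with $\|\mu\|^2=\|\mu'\|^2=m$ force $v\cdot\mu=\|v\|^2/2$, so $\mu$ sits in the bisecting plane $P_v:=\{x:v\cdot x=\|v\|^2/2\}$, and its partner $v-\mu$ lies there too. Hence $r(v)=|\mathcal{E}\cap C_v|$, where $C_v:=\sqrt{m}\,\mathcal{S}^2\cap P_v$ is a planar circle of radius $\sqrt{m-\|v\|^2/4}$ inside the rank-two sub-lattice $P_v\cap\mathbb{Z}^3$ (of determinant $\|v\|$ when $v$ is primitive). The classical divisor bound for lattice points on a circle then gives the pointwise estimate $r(v)\ll_\varepsilon m^\varepsilon=\mathcal{N}^{o(1)}$.

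The crux of the argument is an upper bound for the number $M_t:=\#\{v\neq 0:r(v)\ge t\}$ of $t$-rich circles on $\sqrt{m}\,\mathcal{S}^2$. My plan is to combine a Szemer\'edi--Trotter-type incidence estimate between the points $\mathcal{E}$ and the family of planar circles $\{C_v\}$ (which become genuine planar circles under stereographic projection from a fixed point of $\mathcal{S}^2$), with arithmetic rigidity specific to rich lattice circles: each $C_v$ is pinned down by its pole $v$, and a rich circle forces $v$ to be both short in $\mathbb{Z}^3$ and to have controlled gcd, confining rich poles to a thin set. Feeding an estimate of the shape $M_t\ll\mathcal{N}^{7/4+o(1)}$ (uniform in $t\ge 3$) into a dyadic decomposition over the values of $r(v)\in[1,\mathcal{N}^{o(1)}]$ would then yield $\sum_{v\neq 0}r(v)(r(v)-2)\ll\mathcal{N}^{7/4+o(1)}$, completing the proof via the reduction of the first paragraph.

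The principal obstacle is pinning down the exponent $7/4$. A bare Szemer\'edi--Trotter application only gives $M_t\ll\mathcal{N}^2/t^3+\mathcal{N}/t$, which plugged into the dyadic sum merely reproduces the trivial bound $\sum r(v)^2\ll\mathcal{N}^{2+o(1)}$ and therefore fails to improve on the main term $2\mathcal{N}^2$. The genuine saving has to come from arithmetic input specific to lattice circles on the sphere --- e.g.\ from the sub-lattice structure of $P_v\cap\mathbb{Z}^3$, the equidistribution of $\mathcal{E}$ on $\sqrt{m}\,\mathcal{S}^2$, or restrictions on the admissible poles $v$ of rich circles. Quantifying this input uniformly in $t$, while carefully handling the potentially many exceptional $v$ of very small length or small gcd, is where the technical bulk of the proof will lie.
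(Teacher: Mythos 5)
Your reduction of $|\mathcal{X}_m^{(3)}(4)|$ to a ``non-trivial additive energy'' bound is correct (with the $O(\mathcal{N}^{1+o(1)})$ correction absorbing the diagonal pairs and the $v$ with $r(v)=1$), and your geometric reading of $r(v)$ as the number of lattice points on a planar circle on $\sqrt{m}\,\mathcal{S}^2$ matches the paper's reformulation in terms of the sets $\tilde{\mathcal{X}}(\tau)$ and the rich-circle set $\mathcal{T}$. So the structural skeleton of your argument is the same as the paper's. But there is a genuine gap, and you have named it yourself: you reduce the theorem to ``$M_t\ll\mathcal{N}^{7/4+o(1)}$ uniformly for $t\ge 3$'' and then observe that Szemer\'edi--Trotter alone only recovers the trivial $\mathcal{N}^{2}$, so the proof must rest on arithmetic input specific to lattice circles; that arithmetic input is the entire content of the paper's proof and does not appear in your proposal.

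Concretely, the paper obtains the $7/4$ as follows. For a $v=\tau$ in the rich set, pick two non-antipodal lattice points $\underline{C},\underline{D}$ on the circle and write them in a $\Z$-basis $\underline{A},\underline{B}$ of the sublattice $P\cap\Z^3$, normalized so that $\underline{A}\times\underline{B}=\tau/s$ with $s=\gcd(\tau)$. Setting $r=k_1l_2-k_2l_1$ and $n=4m-\|\tau\|^2$, the cross-product identity $\|\underline{C}\times\underline{D}\|^2=n^2-(\underline{C}\cdot\underline{D})^2$ rearranges to the factorization
\begin{equation*}
\bigl(2s^2n+r^2-2s^2(\underline{C}\cdot\underline{D})\bigr)\bigl(2s^2n+r^2+2s^2(\underline{C}\cdot\underline{D})\bigr)=16mr^2s^2+r^4,
\end{equation*}
so that for each choice of a divisor $d$ of the right-hand side the value of $n$, hence of $\|\tau\|^2$, is determined. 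This is then fed into three competing bounds for the number of $\tau\in\mathcal{T}$ in a dyadic box $r\asymp R$, $s\asymp S$: one from the divisibility constraint $\|\tau\|^2\equiv 0\bmod s^2$ combined with Pall's exact formula for $|I_m(\cdot)|$ (the paper's Theorem 2.3 and Lemma 2.4(ii)); one using in addition the size constraint $\|\tau\|^2\le 16s^2m^2/r^2$; and one purely from the $O(\mathcal{N}^{o(1)}RS)$ admissible $(r,s,d)$ triples combined with the count of $\tau$ with a prescribed number of distinct lengths (Lemma 2.4(i)). Optimizing the minimum of these three over dyadic $R,S$ yields $|\mathcal{T}|\ll\mathcal{N}^{7/4+o(1)}$. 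None of the factorization identity, the use of Pall's formula, or the dyadic trade-off appears in your proposal, and it is precisely this machinery --- not an incidence theorem --- that produces the exponent. Without it, your argument stalls exactly where you say it does.
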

\begin{thm}\label{main6}
Letting $m \rightarrow \infty$, one has the estimate
\begin{equation}\label{sixcorr}
|\mathcal{C}_m^{(3)}(6)|\ll \mathcal{N}^{11/3+ o(1)}. 
\end{equation}
\end{thm}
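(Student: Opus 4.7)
\medskip
\noindent\textbf{Proof plan.} The plan is to express $|\mathcal{C}_m^{(3)}(6)|$ as a second moment of a convolution, split the counting into a degenerate part handled via Theorem~\ref{main4} and a non-degenerate part, and extract the latter via a dyadic level-set analysis. Setting $T_3(v):=|\{(\mu_1,\mu_2,\mu_3)\in\mathcal{E}^3:\mu_1+\mu_2+\mu_3=v\}|$ and $g=\mathbf{1}_{\mathcal{E}}$, Plancherel gives
\[
|\mathcal{C}_m^{(3)}(6)|=\sum_{v\in\Z^3}T_3(v)^2=\int_{\T^3}|\hat g(\xi)|^6\,d\xi,\qquad \hat g(\xi)=\sum_{\mu\in\mathcal{E}}e(\mu\cdot\xi).
\]
First I would partition $\mathcal{C}(6)$ by which proper sub-sums vanish: the fully paired configurations $\mu_i=-\mu_{\pi(i)}$ contribute exactly $15\mathcal{N}^3$, while every other degenerate pattern reduces to a count involving $|\mathcal{X}(k)|$ for some $k\le 4$. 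The largest secondary term is $\mathcal{N}\cdot|\mathcal{X}(4)|\ll\mathcal{N}^{11/4+o(1)}$ by Theorem~\ref{main4}, comfortably below the target $\mathcal{N}^{11/3+o(1)}$.

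For the non-degenerate part I would first record an easy pointwise bound. For $w\neq 0$ the pair-count $T_2(w):=|\{(\mu,\mu')\in\mathcal{E}^2:\mu+\mu'=w\}|$ equals the number of lattice points on the circle $\sqrt m\mathcal{S}^2\cap\{x:x\cdot w=|w|^2/2\}$, which by a divisor-type bound is $O_\epsilon(m^\epsilon)$. In the admissible range one has $m\ll\mathcal{N}^{2+o(1)}$ (Duke/Siegel), so $T_2(w)=\mathcal{N}^{o(1)}$ and consequently $T_3(v)=\sum_{\mu\in\mathcal{E}}T_2(v-\mu)\ll\mathcal{N}^{1+o(1)}$ for every $v$. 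Together with $\sum_v T_3(v)=\mathcal{N}^3$, this only yields the weaker estimate $|\mathcal{C}_m^{(3)}(6)|\ll\mathcal{N}^{4+o(1)}$, falling short of the claim by a factor of $\mathcal{N}^{1/3}$.

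To recover that missing factor I would perform a dyadic decomposition over the level sets $B_K:=\{v:T_3(v)\ge K\}$. In addition to the trivial Markov bound $|B_K|\le \mathcal{N}^3/K$, I aim to prove an incidence-type inequality of the shape $|B_K|\ll\mathcal{N}^{a+o(1)}/K^{b}$ with $b>1$, derived from Theorem~\ref{main4}: a $v$ with $T_3(v)\ge K$ admits many decompositions $v=\mu+w$ with $\mu\in\mathcal{E}$ and $w\in\mathcal{E}+\mathcal{E}$, and re-pairing produces a proportionally large number of 4-correlations controlled by $|\mathcal{X}(4)|$. Choosing the cutoff $K_0\asymp\mathcal{N}^{2/3}$ equalises the two regimes, and the resulting dyadic sum is $\ll\mathcal{N}^{11/3+o(1)}$.

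The substantive obstacle will be precisely this level-set inequality. It plays the role that Szemer\'edi--Trotter plays in the Bombieri--Bourgain treatment of $|\mathcal{C}_m^{(2)}(6)|$, but here the lines are replaced by the $2$-parameter family of rational circles cut out of $\sqrt m\mathcal{S}^2$ by affine hyperplanes, and one must work with incidences between $\mathcal{E}$ and this circle family. Extracting the exact exponent $11/3=3+2/3$ requires a careful balance between the $|\mathcal{X}(4)|$ input from Theorem~\ref{main4} and the divisor-type control on $T_2$ described above.
\medskip
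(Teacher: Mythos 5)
Your high-level decomposition is essentially right: writing $|\mathcal{C}_m^{(3)}(6)|=\sum_v T_3(v)^2$ and dyadically splitting over the level sets of $T_3$ (or, equivalently up to $\mathcal{N}^{o(1)}$ factors coming from Jarn\'ik's bound \eqref{taurep}, over the levels $D$ of $|S(\underline{A},\sqrt m)\cap(\mathcal{E}+\mathcal{E})|$, which is what the paper actually does) is exactly the strategy used, and the cutoff $K_0\asymp\mathcal{N}^{2/3}$ balancing Markov against a second inequality is the right numerology: one needs $|B_K|\ll\mathcal{N}^{4+o(1)}/K^{5/2}$ alongside $|B_K|\ll\mathcal{N}^{3}/K$ to land on the exponent $11/3$.

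The genuine gap is in the source of that second level-set inequality. You propose to derive it from Theorem~\ref{main4}, i.e.\ from the bound $|\mathcal{X}_m(4)|\ll\mathcal{N}^{7/4+o(1)}$, by arguing that a $v$ with $T_3(v)\geq K$ forces many $4$-correlations via ``re-pairing.'' This does not work: pairing two triples $(\mu_1,\mu_2,\mu_3)$ and $(\nu_1,\nu_2,\nu_3)$ with common sum $v$ produces the relation $\mu_1+\mu_2+\mu_3-\nu_1-\nu_2-\nu_3=0$, which is a $6$-correlation, not a $4$-correlation, and there is no visible way to reduce this to length~four without imposing an extra coincidence (e.g.\ $\mu_3=\nu_3$) that loses a full power of $\mathcal{N}$. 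More to the point, Theorem~\ref{main4} is an average statement about the total number of non-degenerate quadruples and carries no information about the \emph{concentration} of $T_3$; a bound of the shape $|B_K|\ll\mathcal{N}^{a}/K^{b}$ with $b>1$ is a genuinely different kind of inequality.

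The paper obtains this missing inequality from an independent and fairly heavy tool: the point--variety incidence theorem of Fox, Pach, Sheffer, Suk and Zahl (stated here as Theorem~\ref{genipv} and proved quantitatively in Appendix~\ref{incidencesection}). The geometric setup is also slightly different from the one you describe: rather than circles cut out of the fixed sphere $\sqrt m\,\mathcal{S}^2$ by hyperplanes, the incidences are between the point set $\mathcal{P}=\mathcal{E}+\mathcal{E}\subset\R^3$ and the family of \emph{spheres} $S(\underline{A},\sqrt m)$ with $\underline{A}\in\mathcal{E}+\mathcal{E}+\mathcal{E}$; a point $\tau\in\mathcal{P}$ lies on $S(\underline{A},\sqrt m)$ precisely when $\underline{A}-\tau\in\mathcal{E}$. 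Jarn\'ik's bound \eqref{taurep} ensures the incidence graph is $K_{2,\mathcal{N}^{\varepsilon}}$-free, which is what lets Theorem~\ref{genipv} produce the estimate $I(\mathcal{P},\mathcal{S}(D))\ll|\mathcal{S}(D)|^{3/5}\mathcal{N}^{8/5+\varepsilon}+\mathcal{N}^2$, hence $D^{5/2}|\mathcal{S}(D)|\ll\mathcal{N}^4$. This incidence theorem is the analogue of Szemer\'edi--Trotter that you correctly anticipate must be needed, but it is not derivable from Theorem~\ref{main4}; it is a separate input, and without it your argument has no way to beat the trivial $\mathcal{N}^{4+o(1)}$ bound.
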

\begin{cor}[Long correlations]\label{maincorro}
For any even length $ \ell \geq 8 $ one has the bounds
\begin{align}\label{ellcorr}
\mathcal{N}^{ \ell -3-o(1)} \ll |\mathcal{X}_m(\ell )|\ll \mathcal{N}^{ \ell -7/3+ o(1)  }. 
\end{align}
as $m \rightarrow \infty$, $m\not\equiv 0,4,7 \pmod 8$. The upper bound holds for all $\ell \geq 6$.

\end{cor}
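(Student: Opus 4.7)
The plan is to handle the upper and lower bounds separately: the upper bound follows directly from Theorem \ref{main6}, while the lower bound combines a Cauchy--Schwarz lower bound on $|\mathcal{C}(\ell)|$ with an inclusion--exclusion argument that removes degenerate configurations.

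For the upper bound (valid for all $\ell\geq 6$), start from $|\mathcal{X}(\ell)|\leq |\mathcal{C}(\ell)|$ and rewrite the latter as $|\mathcal{C}(\ell)|=\int_{\mathbb{T}^3}\Phi(x)^\ell\,dx$, where $\Phi(x):=\sum_{\mu\in\mathcal{E}}e(\mu\cdot x)=\mathcal{N}\,r(x)$. Since $\mathcal{E}$ is symmetric, $\Phi$ is real with $\|\Phi\|_\infty=\Phi(0)=\mathcal{N}$, so the pointwise inequality $\Phi^\ell\leq \mathcal{N}^{\ell-6}\,\Phi^6$ (using $\Phi^6\geq 0$) integrates to
\begin{equation*}
|\mathcal{C}(\ell)|\leq \mathcal{N}^{\ell-6}\,|\mathcal{C}(6)|\ll \mathcal{N}^{\ell-7/3+o(1)}
\end{equation*}
by Theorem \ref{main6}. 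Observe that this same argument gives $|\mathcal{C}(k)|\ll\mathcal{N}^{k-7/3+o(1)}$ for every $k\geq 6$, a fact used below.

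For the lower bound ($\ell\geq 8$ even), apply a union bound over proper subsets $S\subsetneq\{1,\dots,\ell\}$ with $\sum_{i\in S}\mu_i=0$. Using $|\mathcal{C}(1)|=0$, this yields
\begin{equation*}
|\mathcal{X}(\ell)|\;\geq\; |\mathcal{C}(\ell)|\;-\;\sum_{k=2}^{\ell-2}\binom{\ell}{k}|\mathcal{C}(k)|\,|\mathcal{C}(\ell-k)|.
\end{equation*}
To bound the main term from below, write $|\mathcal{C}(2k)|=\sum_v f_k(v)^2$ with $f_k(v):=\#\{(\mu_1,\dots,\mu_k)\in\mathcal{E}^k:\sum_i\mu_i=v\}$; since $\|f_k\|_1=\mathcal{N}^k$ and $\mathrm{supp}(f_k)$ is contained in the ball of radius $k\sqrt{m}$ (of cardinality $\ll m^{3/2}$), Cauchy--Schwarz gives $|\mathcal{C}(2k)|\gg \mathcal{N}^{2k}/m^{3/2}$. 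Siegel's bound $\mathcal{N}=r_3(m)\gg m^{1/2-o(1)}$, valid because $m\not\equiv 0,4,7\pmod 8$, then converts this into $|\mathcal{C}(\ell)|\gg \mathcal{N}^{\ell-3-o(1)}$.

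It remains to verify that every degenerate summand is $o(\mathcal{N}^{\ell-3})$. Combining $|\mathcal{C}(2)|=\mathcal{N}$, $|\mathcal{C}(4)|\ll\mathcal{N}^{2}$ (from Theorem \ref{main4} plus the pair--cancellation count), the just-proved $|\mathcal{C}(k)|\ll\mathcal{N}^{k-7/3+o(1)}$ for $k\geq 6$, and the Cauchy--Schwarz interpolation $|\mathcal{C}(k)|\leq |\mathcal{C}(k-1)|^{1/2}|\mathcal{C}(k+1)|^{1/2}$ for odd $k$, a short case check shows that the dominant error term is $|\mathcal{C}(2)|\,|\mathcal{C}(\ell-2)|\ll\mathcal{N}^{\ell-10/3+o(1)}$, beaten by the main term by a factor $\mathcal{N}^{-1/3+o(1)}$. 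The principal delicate point of the argument is the lower bound on $|\mathcal{C}(\ell)|$, whose exponent $\ell-3-o(1)$ rests on the near-square-root estimate $\mathcal{N}\gg m^{1/2-o(1)}$ furnished by Siegel's (ineffective) theorem; this is exactly what forces the small $o(1)$ loss in the exponent of the final bound.
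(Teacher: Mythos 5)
Your proof is correct, and it establishes the result via a route that differs from the paper's in the key lower-bound step. The upper bound is essentially identical: in the paper's notation $f(\underline{\alpha})=\sum_{\mu\in\mathcal{E}}e(\mu\cdot\underline{\alpha})$ and $|\mathcal{C}(\ell)|=\int f^\ell$, and both you and the paper pull out $\mathcal{N}^{\ell-6}$ and invoke Theorem~\ref{main6}. The lower bound is where you diverge. The paper works on the "physical side": it uses $f(\underline{0})=\mathcal{N}$ together with the gradient bound $\|\nabla f\|_\infty\ll \mathcal{N}^{2+\varepsilon}$ to produce a ball $A$ of measure $\gg\mathcal{N}^{-3-\varepsilon}$ on which $|f|\geq\mathcal{N}/2$, whence $|\mathcal{C}(\ell)|\geq\int_A f^\ell\gg\mathcal{N}^{\ell-3-\varepsilon}$. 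You instead work on the "counting side": writing $|\mathcal{C}(2k)|=\|f_k\|_2^2$ for the representation function $f_k$, the Cauchy--Schwarz inequality $\|f_k\|_1^2\leq|\mathrm{supp}(f_k)|\cdot\|f_k\|_2^2$ together with $\|f_k\|_1=\mathcal{N}^k$ and $|\mathrm{supp}(f_k)|\ll m^{3/2}$ gives the same exponent. Both arguments charge the $o(1)$ to Siegel's ineffective bound $\mathcal{N}\gg m^{1/2-o(1)}$, as you correctly emphasize. Your removal of degenerate configurations is also organized a bit differently: you apply a union bound over single proper subsets $S\subsetneq\{1,\dots,\ell\}$ with $\sum_{i\in S}\mu_i=0$, giving $\sum_{k=2}^{\ell-2}\binom{\ell}{k}|\mathcal{C}(k)||\mathcal{C}(\ell-k)|$, while the paper decomposes $\mathcal{D}(\ell)$ by the full partition into vanishing blocks and treats the pairwise-symmetric tuples $\mathcal{D}'(\ell)$ separately. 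Your version over-counts but is simpler, and it yields the same dominant error $\mathcal{N}^{\ell-10/3+o(1)}$ from the $k=2$ term, which is what matters. One minor observation: your interpolation $|\mathcal{C}(3)|\leq|\mathcal{C}(2)|^{1/2}|\mathcal{C}(4)|^{1/2}\ll\mathcal{N}^{3/2}$ is weaker than the $|\mathcal{C}(3)|\ll\mathcal{N}^{1+o(1)}$ available via Jarn\'ik's bound \eqref{taurep}, but this looseness is harmless since the $k=3$ error is still subdominant to $k=2$.
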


\begin{proof}[Proof of Theorem \ref{thethm}]
Insert the bounds of Theorems \ref{main4} and \ref{main6} into Proposition \ref{theprop}.
\end{proof}
{\bf Remark.} We believe Theorems \ref{main4} and \ref{main6} to be of independent interest. In addition to their application in the proof of Theorem \ref{thethm}, they allow for the study of finer aspects of $\mathcal{A}$. In the companion paper \cite{BCMW}, it is shown by way of Theorems \ref{main4} and \ref{main6}, that in the Wiener chaos expansion of $\mathcal{A}$, only the fourth order chaos component is asymptotically significant: its distribution is asymptotic to the distribution of $\mathcal{A}$.

The lower bound in Corollary \ref{maincorro} indicates that the value distribution of $r(x)$, when averaged over the whole torus, is not Gaussian. For example the eighth moment of $r(x)$ blows up relative to the variance.

%\begin{prop}
%For a density one sequence of energies $\{E_i\}_i$,
%We have
%\begin{equation*}
%\text{Var}(Vol)=\frac{E}{N^2}\cdot\frac{4}{3\pi^2}\cdot\frac{-1+ 9\cdot{_3B_4(\nu_m)}}{50}+\frac{E}{N^2}\cdot O\left(\frac{|\mathcal{X}(4)|}{N^2}+\frac{|\mathcal{C}(6)|}{N^4}\right)
%\end{equation*}
%where
%\begin{equation*}
%{_3B_4}(\nu_m)=\frac{1}{m^4N^2}\sum_{\mu,\mu'}\langle\mu,\mu'\rangle^4=\frac{1}{N^2}\sum_{\mu,\mu'}(\cos(\varphi_{\mu,\mu'}))^4.
%\end{equation*}
%\end{prop}

\subsection{Outline of the paper}
In the rest of this work, we prove Theorems \ref{main4} and \ref{main6}, and Proposition \ref{theprop}. The proof of Proposition \ref{theprop} begins in Section \ref{seckr} and is concluded in Section \ref{secnsing}, after the necessary preparatory results have been stated. The proof follows the method employed in \cite{krkuwi} for the $2$-dimensional case. The arithmetic random wave $F$ (as in \eqref{arw}) is a Gaussian random field: the variance of the nodal area may be evaluated via the {\em Kac-Rice formulas}, which are discussed in Section \ref{seckr}. To this purpose, it is necessary to understand the (scaled) {\em two-point correlation function} $K_2$ of $F$ (defined in \eqref{K2tdef} and \eqref{K2def}). In Section \ref{seckr}, we express $K_2$ in terms of the conditional Gaussian expectation of the $6\times 6$ vector $(\nabla F(0), \nabla F(x))$ conditioned on $F(0)=0, F(x)=0$; the resulting (scaled) covariance matrix, $\Omega$, depends on the covariance function \eqref{r} and its (first and second order) derivatives.

Next, in Section \ref{secnsing}, we define a small set $S\subset\mathbb{T}^3$ (the {\em singular set}, cf. Definition \ref{S}), where it is possible to bound the contribution of $K_2$ to the variance. We then establish asymptotics for $K_2$ valid outside the set $S$: this computation involves the Taylor expansion of $K_2$ as a $6$-variate function of the matrix $\Omega$ around the identity matrix $I_6$; in fact, we will show that, on $\mathbb{T}^3\setminus S$, $\Omega$ is a small perturbation of $I_6$. The Taylor expansion is carried out in Section \ref{Berry's}, using Berry's method \cite{berry2}. In Section \ref{technical} we perform the technical computations needed to evaluate the leading constant of the nodal area variance; the necessary background on spherical lattice points is covered in Section \ref{pflemma4.6}.

Let us highlight similarities and differences with the $2$-dimensional setting \cite{krkuwi}. Both the leading term and error term in Proposition \ref{theprop} are of arithmetic nature, as in \cite{krkuwi}: the leading term depends on the angular distribution of lattice points on spheres, while the error term depends on the lattice point correlations of Definition \ref{correl}. However, there are marked differences between the $2$- and $3$-dimensional settings; first, as noted above, the nodal area variance obeys an asymptotic law, whereas the nodal {\em length} variance depends on arithmetic properties of the energy.
\\
Second, for the admissibility of the error term, we require a bound for $|\mathcal{X}_m^{(3)}(4)|$ whereas, in the $2$-dimensional setting, 
\begin{equation*}
|\mathcal{X}_m^{(2)}(4)|=0 \qquad \text{for all } m\in{V^{(2)}},
\end{equation*}
which may be seen by noting that two circles intersect in at most two points (Zygmund's trick \cite{zyg}).
%the set ${_2}\mathcal{X}(4)$ is empty
The bound for the length four correlations of Theorem \ref{main4} will be established in Section \ref{foursection}.

One must also bound the total number of length six correlations $\mathcal{C}_m(6)$. The proof of Theorem \ref{main6} will be established in Section \ref{sixsection} via a theorem due to Fox, Pach, Sheffer, Suk and Zahl \cite{Zar}. Their result allows one to bound the number of incidences between points and spheres in $\R^3$, thereby playing the role of the Szemer\'edi-Trotter Theorem employed in dimension $2$.

\vspace{0.3cm}
{\bf Notation.} For functions $f$ and $g$ we will use Landau's asymptotic notation $f=O(g)$  or equivalently $f \ll g$ to denote the inequality $f \leq C g$ for some constant $C$. We may add a subscript e.g. $f\ll_t g$ to emphasize the fact that $C$ depends on the parameter $t$. The statement $f\asymp g$ means $g\ll f\ll g$.% Given $n \in \N$, we will write $rad(n)$ for its  largest squarefree divisor.
\\
The letter $\mu$ will be reserved for elements of $\mathcal{E}_m$ while $\tau$ will denote a member of $\mathcal{E}_m+\mathcal{E}_m$. Generic (deterministic) points $\underline{x} \in \R^3$ will be underlined and we will write $\underline{u} \cdot \underline{x} $ for the Euclidean inner product of two vectors $\underline{u}$ and $\underline{x}$. Finally, we will denote by $S(\underline{x},R) \subset \R^3$ the sphere of radius $R$ centered at $\underline{x}$.

\subsection*{Acknowledgements}
R.M. was funded by a Graduate Teaching Scholarship, Department of Mathematics, King's College London, and worked on this project as part of his PhD thesis, under the supervision of Igor Wigman. The authors wish to thank Igor Wigman for suggesting this collaboration, for insightful remarks and corrections, and for his availability. The authors wish to thank Ze{\'e}v Rudnick and Joshua Zahl for helpful communications and Shagnik Das for pointing out the explicit bound in Theorem \ref{KST}.

\section{Lattice points on spheres and correlations}
\label{pflemma4.6}
\subsection{Preliminary results about the lattice point set}
\label{prelim}
Recall the notation for the lattice point set $\mathcal{E}_m$ and its cardinality $\mathcal{N}_m$. As mentioned in the introduction,
%there is at least one lattice point 
$\mathcal{E}_m$ is non-empty if and only if $m$ is not of the form $4^l(8k+7)$. We work with the assumption $m\not\equiv 0,4,7 \pmod 8$, which is equivalent to the existence of lattice points $(\mu^{(1)},\mu^{(2)},\mu^{(3)}) \in \mathcal{E}_m$ with $\mu^{(1)},\mu^{(2)},\mu^{(3)}$ coprime. In this case, the quantities $\mathcal{N}_m$ and $m$ are related by the estimates
\begin{equation}
\label{totnumlp}
m^{1/2 -o(1)} \ll \mathcal{N}_m\ll m^{1/2 + o(1)}
\end{equation}
(see e.g. \cite[Section 1]{bosaru} or \cite[Section 4]{ruwiye}).

We shall require a bound for the number of lattice points on circles centred in $\Z^3$,
$$\kappa_3(m):= \max_{P} |\mathcal{E}_m \cap P|,$$
where the maximum is taken over all planes $P$. It was shown by V. Jarn\'ik \cite{jarnik} that
\begin{align}\label{taurep}
\kappa_3(m) \ll m^{o(1)}.
\end{align}
%For a fixed $z\in\mathcal{S}^2$, the summation
%\begin{equation}
%\label{sumcos4}
%\frac{1}{N}\sum_{\mu'}(\cos(\varphi_{z,\mu'}))^l
%\end{equation}
%has a unique limit; this is due to the lattice points on spheres being equidistributed in the following sense.

Recall Definition \ref{correl} of the set of lattice point $\ell$-correlations $\mathcal{C}=\mathcal{C}_m^{(d)}(\ell)$ and the subset of non-degenerate correlations $\mathcal{X}=\mathcal{X}_m^{(d)}(\ell)$.
In what follows, we may omit the index $d$ when $d=3$, as we will be mostly concerned with the $3$-dimensional setting; we may also suppress the dependency on $m$.
\begin{defin}
\label{correl2}
Denote by $\mathcal{D}=\mathcal{D}_m^{(d)}(\ell)$ the set of {\bf degenerate} correlations so that
\begin{equation*}
\mathcal{C}=\mathcal{D}\dot{\cup}\mathcal{X}.
\end{equation*}
Let $\ell$ be an even positive integer. We will call {\bf symmetric} correlations $\mathcal{D}'$ those that cancel out in pairs. Further, denote by $\mathcal{D}''$ the set of {\bf diagonal} correlations of the form
\begin{equation*}
\{\pm\mu,\dots,\pm\mu\}
\end{equation*}
(with exactly $\ell/2$ plus signs). Note that
\begin{equation*}
\mathcal{D}''\subseteq \mathcal{D}'\subseteq\mathcal{D}.
\end{equation*}
\end{defin}
%Moreover, for every $d$, the non-degenerate $4$-correlations are exactly those that do not cancel out in pairs%(``off-diagonal" correlations):
%\begin{equation*}
%\mathcal{X}_m^{(d)}(4)=\{(\mu_1,\mu_2,\mu_3,\mu_4)\in{\mathcal{C}_m^{(d)}(4)} : \mu_1\neq-\mu_j, j=2,3,4\}.
%\end{equation*}

Let us analyse in detail the set $\mathcal{C}(4)=\mathcal{C}_m^{(3)}(4)$, as several summations range over this set in what follows. Let $d=3$ and $\ell=4$ in Definitions \ref{correl} and \ref{correl2}. Then $\mathcal{D}(4)=\mathcal{D}'(4)$ is the set of quadruples $(\mu_1,\mu_2,\mu_3,\mu_4)$ that cancel out in pairs,
\begin{equation*}
\mu_1=-\mu_2 \text{ and } \mu_3=-\mu_4,
\end{equation*}
and permutations of the indices (i.e., each degenerate correlation is necessarily symmetric when $\ell=4$). The diagonal correlations $\mathcal{D}''\subset\mathcal{D}$ satisfy
\begin{equation*}
\mu_{1}=\mu_{2}=-\mu_{3}=-\mu_{4}
\end{equation*}
for some permutation of the indices.
\\
With $\mathcal{X}(4)$ denoting as usual the set of non-degenerate correlations, a summation over $\mathcal{C}(4)$ may thus be treated by separating it as follows:
\begin{equation}
\label{splitsums}
\sum_{\mathcal{C}(4)}
=
\sum_{\substack{\mu_1=-\mu_2\\\mu_3=-\mu_4}}
+
\sum_{\substack{\mu_1=-\mu_3\\\mu_2=-\mu_4}}
+
\sum_{\substack{\mu_1=-\mu_4\\\mu_2=-\mu_3}}
+
O\left(\sum_{\mathcal{D''}}
+
\sum_{\mathcal{X}(4)}\right)
.
\end{equation}

The proof of Theorem \ref{main4} will rely on a classical estimate regarding the size of the set 
$$I_m(r):=\left\{(\mu_1, \mu_2) \in \mathcal{E}_m^2  \left| \right. \   \mu_1 \cdot \mu_2=r  \right\}=\left\{(\mu_1, \mu_2) \in \mathcal{E}_m^2  \left| \right. \  \| \mu_1 + \mu_2\|^2=2(m+r)  \right\}.$$
In fact, there is an exact formula for $|I_m(r)|$ (see \cite[Section 7]{Pall2}) from which one can deduce the following bound.
\begin{thm}\cite{Pall2}\label{bintern1}
For $|r|<m$ one has that
$$|I_m(r)| \ll \gcd(r,m)^{1/2} m^{o(1)}.$$ 
\end{thm}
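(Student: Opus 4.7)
The plan is to identify $|I_m(r)|$ with the classical count of integral representations of a binary quadratic form by a ternary form. Pairs $(\mu_1,\mu_2)\in I_m(r)$ are in bijection with $3\times 2$ integer matrices $A = [\mu_1|\mu_2]$ satisfying $A^TA = G$, where
$$G := \begin{pmatrix} m & r \\ r & m \end{pmatrix}, \qquad \det G = m^2-r^2.$$
Thus $|I_m(r)| = R(G)$, the number of integral representations of $G$ by the form $x_1^2+x_2^2+x_3^2$; the estimate we seek is essentially the content of Pall's explicit formula for $R(G)$ in \cite{Pall2}.

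The first step would be to organise the count around the cross product $v := \mu_1\times\mu_2\in\Z^3$, which satisfies $\|v\|^2 = \det G = m^2-r^2$ and is perpendicular to both $\mu_1$ and $\mu_2$. Writing $v=gv'$ with $v'\in\Z^3$ primitive and $g=\gcd(v_1,v_2,v_3)$, the lattice $\Lambda_{v'}:=(v')^\perp\cap\Z^3$ has discriminant $\|v'\|^2$, and the identity $\det G = [\Lambda_{v'}:\langle\mu_1,\mu_2\rangle]^2\,\|v'\|^2$ forces $[\Lambda_{v'}:\langle\mu_1,\mu_2\rangle]=g$. One would then stratify
$$|I_m(r)| \;=\; \sum_{v'\text{ primitive}}\;\sum_{\substack{g\geq 1\\ g^2\|v'\|^2 \,=\, m^2-r^2}} N(v',g),$$
where $N(v',g)$ counts ordered bases $(\mu_1,\mu_2)$ of index-$g$ sublattices of $\Lambda_{v'}$ with Gram matrix $G$. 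Standard estimates in binary form theory, together with the divisor bound $d(n)\ll n^{o(1)}$, give $N(v',g)\ll m^{o(1)}$.

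The decisive step is controlling the outer sum, since the naive estimate $r_3^{\mathrm{prim}}((m^2-r^2)/g^2)$ for primitive admissible $v'$ only yields the weak bound $|I_m(r)|\ll m^{1+o(1)}$. The refinement comes from observing that the equation $A^TA = G$ imposes congruence conditions on $v'$ beyond the norm condition: the content of $G$ equals $\gcd(m,r)$, and this content constrains both the admissible values of $g$ and the $p$-adic representation densities $\beta_p(G)$ appearing in the Minkowski--Siegel local product for $R(G)$. Pall's local computation shows that the contribution from primes dividing $\gcd(m,r)$ accumulates to $\gcd(r,m)^{1/2+o(1)}$, while at primes coprime to $2\gcd(m,r)$ the local densities are essentially trivial. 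The principal obstacle is precisely this $p$-adic analysis at primes dividing $2\gcd(r,m)$, where $G$ becomes imprimitive and the local representation problem is most delicate; once the local factors are evaluated and assembled, together with Jarn\'ik's bound \eqref{taurep} where the inner counts require it, the stated estimate $|I_m(r)|\ll\gcd(r,m)^{1/2}\,m^{o(1)}$ follows.
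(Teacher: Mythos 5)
Your identification of $|I_m(r)|$ with the representation number $R(G)$ of the binary Gram matrix $G=\begin{pmatrix} m & r\\ r & m\end{pmatrix}$ by the ternary form $x_1^2+x_2^2+x_3^2$, and the invocation of Pall's exact formula for $R(G)$, is precisely the route the paper takes: the paper states the theorem with only the citation to \cite[Section 7]{Pall2}, remarking that the bound follows from the exact formula there. In that sense you have found the right framework and the right reference.

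However, the intermediate structure you propose contains a genuine error. You stratify by $v=\mu_1\times\mu_2=gv'$ (with $v'$ primitive) and then claim that ``the content of $G$ equals $\gcd(m,r)$, and this content constrains \ldots the admissible values of $g$.'' This is false: $g=\gcd(v_1,v_2,v_3)$ is a divisor of $\det G=m^2-r^2$, but it is \emph{not} constrained by $\gcd(m,r)$. For instance, take $m=5$, $r=4$ (so $\gcd(m,r)=1$), $\mu_1=(2,1,0)$, $\mu_2=(1,2,0)$; then $\mu_1\times\mu_2=(0,0,3)$, so $g=3$. Because of this, the cross-product stratification together with $N(v',g)\ll m^{o(1)}$ only yields the trivial estimate $\sum_g r_3^{\mathrm{prim}}\bigl((m^2-r^2)/g^2\bigr)m^{o(1)}\ll m^{1+o(1)}$, and nothing in the stratification itself recovers the factor $\gcd(r,m)^{1/2}$. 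Your proposal then abandons this stratification and pivots to the Minkowski--Siegel local product for $R(G)$, asserting that ``Pall's local computation'' gives the contribution $\gcd(r,m)^{1/2+o(1)}$ from primes dividing $\gcd(r,m)$. That assertion \emph{is} the content of the theorem; at that point you are deferring to \cite{Pall2} exactly as the paper does, so the stratification step neither adds anything nor connects to the local-density argument. The deduction actually required is to read off, from the exact formula in \cite[Section 7]{Pall2} (which expresses $R(G)$ in terms of Hurwitz class numbers and divisor sums keyed to the content of $G$), that the class-number contributions accumulate to $\gcd(r,m)^{1/2+o(1)}$ while the remaining arithmetic factors are $m^{o(1)}$; your sketch does not carry this out, and the claimed constraint on $g$ in the middle step is incorrect.
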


Before proceeding to the next lemma we introduce some notation. Given $a \in \N$ write $a_m:=\gcd(a,m)$, yielding the corresponding decomposition $a=a_m a'$. For any interval $J \subset (0,4m)$ we may now introduce the collection $$\mathscr{J}_m(J,a)=\left\{\tau \in \mathcal{E}_m+\mathcal{E}_m |\ \|\tau \|^2 \in J,\  \|\tau \|^2 \equiv 0 \bmod a  \right\}.$$
\begin{lemma}\label{tausizeconversion}
(i) For any $\mathcal{B} \subset \mathcal{E}_m+\mathcal{E}_m$ satisfying the bound $\left|\left\{ ||\tau||^2 \ | \tau \in \mathcal{B} \right\} \right| \leq T$ one has that $|\mathcal{B}| \ll \mathcal{N}^{1 +o(1)} T^{1/2}$.\\
(ii) Given any natural number $a=a_m a'$ and any interval $J \subset (0,4m)$ we have the estimate
$$|\mathscr{J}_m(J,a)| \leq \mathcal{N}^{o(1)}\left(\frac{|J|}{(a_m)^{1/2} a'} + \frac{\mathcal{N}}{(a')^{1/2}}\right).$$
\begin{proof}
(i) Given any $d|m$ the number of $\tau \in \mathcal{B}$ for which $\gcd( \ (||\tau||^2-2m)/2, m)=d$ is at most
\begin{align}\label{estbintern}
\sum'_{| l| <   m/d  }\left|  \left\{ \tau \in \mathcal{B} \left| \  \frac{||\tau||^2-2m}{2}=ld  \right. \right\} \right| \leq
 \max_{\substack{\mathcal{L} \subset (-m/d,m/d) \cap \Z  \\  |\mathcal{L}| \leq T  }    }   \ \sum'_{  l \in \mathcal{L} }  \left|I_m \left(ld \right) \right|,
\end{align}
where the superscript $'$ indicates a summation over integers $l$ for which $\gcd(ld,m)=d$.\\
We first consider divisors in the range $d \geq m/T$. Applying Theorem \ref{bintern1} we gather that the RHS of (\ref{estbintern}) is no greater than
$$ \sum_{l \leq m/d} d^{1/2}m^{o(1)} \ll  \frac{m^{1+{o(1)} }   }{d^{1/2}} \ll \mathcal{N}^{1 +o(1)} T^{1/2}.$$
On the other hand, when $d < m/T$ the  RHS of (\ref{estbintern}) is $O( d^{1/2} T ) = O(\mathcal{N}^{1 +o(1)} T^{1/2})$. Adding the contribution of each divisor $d$ we get the desired estimate.\\
(ii) We repeat the argument given in part (i) and consider for each divisor $d|m$ the vectors $\tau \in \mathscr{J}_m(J,a)$ for which $\gcd( \ (||\tau||^2-2m)/2, m)=d$. In particular we must have that $a_m |d$ and it is not hard to show that $||\tau||^2 \equiv 0 \bmod d a'$ which implies the bound $d \leq 4m/a'$. Setting $J':= \frac{1}{2}J -m $, the inequality (\ref{estbintern}) becomes
\begin{align*}
\sum'_{| l| <   m/d  }\left|  \left\{ \tau \in \mathscr{J}_m(J,a)\left| \  \frac{||\tau||^2-2m}{2}=ld  \right. \right\} \right| & \leq
   \ \sum'_{\substack{l \in J'/d \\  l \equiv -m/d \bmod a'  }  }  \left|I_m \left(ld \right) \right| \\
   & \leq \mathcal{N}^{o(1)}\cdot d^{1/2} \left( \frac{|J'|}{d a'} +1\right)\\
   & \leq \mathcal{N}^{o(1)}\cdot \left( \frac{|J'|}{(a_m)^{1/2} a'} + \left(\frac{m}{a'}\right)^{1/2} \right).
\end{align*}
Noting that $|J'| < |J| $ we add the contribution of each $d$ to conclude the lemma.

\end{proof}
\end{lemma}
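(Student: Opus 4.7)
\emph{Proof plan.} The approach is to reduce both bounds to a pair-counting problem on $\mathcal{E}_m$. The essential identity is
\begin{equation*}
\|\tau\|^2 = \|\mu_1 + \mu_2\|^2 = 2m + 2\,\mu_1\cdot\mu_2
\end{equation*}
for $\tau = \mu_1 + \mu_2 \in \mathcal{E}_m + \mathcal{E}_m$: thus specifying $\|\tau\|^2$ fixes the inner product $r = (\|\tau\|^2 - 2m)/2$, and the number of $\tau$ with a given squared norm is at most $|I_m(r)|$. Pall's bound (Theorem \ref{bintern1}) then provides the key arithmetic input $|I_m(r)| \ll \gcd(r,m)^{1/2} m^{o(1)}$.

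\emph{Part (i).} I would partition the admissible inner products $r$ according to the divisor $d = \gcd(r,m)$, writing $r = \ell d$ with $\gcd(\ell, m/d) = 1$. Theorem \ref{bintern1} furnishes the uniform bound $|I_m(\ell d)| \ll d^{1/2} m^{o(1)}$. For each $d \mid m$, the count of relevant values of $\ell$ is bounded in two ways: trivially by $T$ (by the hypothesis on $\mathcal{B}$), and by the length $2m/d$ of the allowed window $\ell \in (-m/d, m/d)$. Splitting the divisor sum at the threshold $d \asymp m/T$, using the first estimate when $d$ is small and the second when $d$ is large, and recalling that $m$ has $m^{o(1)}$ divisors, one arrives at a total of order $m^{1/2 + o(1)} T^{1/2}$. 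The bound \eqref{totnumlp} then converts this into the claimed $\mathcal{N}^{1+o(1)} T^{1/2}$.

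\emph{Part (ii).} The same framework accommodates the additional constraints $\|\tau\|^2 \in J$ and $a \mid \|\tau\|^2$. The divisibility $a \mid 2(m+\ell d)$ together with $a_m \mid m$ forces $a_m \mid d$, while decoupling the $m$-part of $a$ from its coprime complement pins $\ell$ to a single residue class modulo $a'$. Hence for each admissible divisor (necessarily $a_m \leq d \leq 4m/a'$) the count of admissible $\ell$ in the rescaled window $J' = \tfrac{1}{2}J - m$ divided by $d$ is at most $|J|/(d a') + 1$. Combined with Theorem \ref{bintern1}, the per-divisor contribution is $\ll m^{o(1)}\bigl(|J|/(d^{1/2} a') + d^{1/2}\bigr)$. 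Summing over the $m^{o(1)}$ admissible $d$: the first term is maximised at $d = a_m$, producing $|J|/(a_m^{1/2} a')$, while the second is maximised at $d \asymp m/a'$, yielding $m^{1/2}/(a')^{1/2} \asymp \mathcal{N}/(a')^{1/2}$ via \eqref{totnumlp}.

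The main subtlety — and the step I would verify most carefully — is the derivation in part (ii) of the twin constraints $a_m \mid d$ and $\ell \equiv -m/d \pmod{a'}$ from the single congruence $a \mid \|\tau\|^2$, after the correct splitting $a = a_m a'$. Once these are in place, the rest is a routine two-regime balancing of the bounds on $\ell$ per divisor, exactly paralleling the argument of part (i).
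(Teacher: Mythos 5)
Your proof follows the paper's argument essentially step for step: the same reduction via $\|\tau\|^2 = 2m + 2\mu_1\cdot\mu_2$ to counting pairs $I_m(r)$, the same stratification by $d=\gcd(r,m)$ with Pall's bound $|I_m(\ell d)|\ll d^{1/2}m^{o(1)}$, the same two-regime split at $d\asymp m/T$ in part (i), and in part (ii) the same deductions $a_m\mid d$, $\ell$ confined to one residue class mod $a'$, and $d\leq 4m/a'$, followed by extremising the two terms at $d=a_m$ and $d\asymp m/a'$. This is correct and matches the paper's proof.
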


\subsection{Equidistribution of lattice points on spheres}
\label{spheres}
Linnik conjectured (and proved under GRH) that the projected lattice points $\mathcal{E}_m/\sqrt{m} \subset \mathcal{S}^2$ become equidistributed as $m\to\infty$, $m\not\equiv 0,4,7 \pmod 8$. This result was proven unconditionally by Duke \cite{duke88}, \cite{dukesp}, and by Golubeva and Fomenko \cite{golfom}. As a consequence, one may approximate a summation over the lattice point set by an integral over the unit sphere:
%We state a quantitative version of the result, that also estimates the error term when approximating the summation \eqref{sumcos4} by the integral
%\begin{equation*}
%\int_{z\in\mathcal{S}^2}(\cos(\varphi_{\mu,z}))^l\frac{dz}{4\pi}.
%\end{equation*}
\begin{lemma}[{\cite[Lemma 8]{pascbo}}]
\label{equidlemma}
Let $g(z)$ be a smooth function on $\mathcal{S}^2$. For $m\to\infty$, $m\not\equiv 0,4,7 \pmod 8$, we have
\begin{equation*}
\frac{1}{\mathcal{N}}\sum_{\mu\in\mathcal{E}}g\left(\frac{\mu}{|\mu|}\right)
=
\int_{z\in\mathcal{S}^2}g(z)\frac{dz}{4\pi}
+
O_g\left(\frac{1}{m^{1/28-o(1)}}\right).
\end{equation*}
\end{lemma}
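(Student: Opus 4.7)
The plan is to prove equidistribution by the classical Weyl criterion applied to a spherical-harmonic expansion, the quantitative rate coming from Duke's bound for Weyl sums over lattice points on spheres.

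First, I would decompose $g$ into spherical harmonics,
\begin{equation*}
g(z)=\sum_{n\geq 0}\sum_{|k|\leq n} c_{n,k}\,Y_n^k(z),
\end{equation*}
where $\{Y_n^k\}$ is an $L^2$-orthonormal basis of spherical harmonics of degree $n$. Since $g$ is smooth on $\mathcal{S}^2$, repeated integration by parts (i.e.\ applying powers of the Laplace-Beltrami operator on $\mathcal{S}^2$) shows that the coefficients decay super-polynomially: for every $A>0$,
\begin{equation*}
|c_{n,k}|\ll_{g,A}(1+n)^{-A}.
\end{equation*}
The $n=0$ term is the constant $c_{0,0}Y_0^0=\int_{\mathcal{S}^2}g(z)\,dz/(4\pi)$, which provides the main term on the right-hand side of the claim.

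Next I would apply to each term $n\geq 1$ the deep arithmetic input of Duke \cite{duke88,dukesp} (together with Golubeva–Fomenko \cite{golfom}). Their theorem, obtained via the Shimura correspondence and subconvex bounds for half-integral weight $L$-functions, gives a power-saving Weyl bound of the form
\begin{equation*}
\Bigl|\sum_{\mu\in\mathcal{E}}Y_n^k\bigl(\mu/|\mu|\bigr)\Bigr|\ll_{n,k}\, m^{1/2-\delta+o(1)},
\end{equation*}
valid for $m\not\equiv 0,4,7\pmod 8$, with $\delta=1/28$. Combined with $\mathcal{N}_m\gg m^{1/2-o(1)}$ from \eqref{totnumlp}, this yields
\begin{equation*}
\frac{1}{\mathcal{N}}\,\Bigl|\sum_{\mu\in\mathcal{E}}Y_n^k\bigl(\mu/|\mu|\bigr)\Bigr|\ll_{n,k}\, \frac{1}{m^{1/28-o(1)}}.
\end{equation*}

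Finally I would assemble the two ingredients. Truncating the spherical-harmonic expansion at some level $N=N(m)$, I would apply Duke's bound to the contribution from $1\leq n\leq N$ (there are $O(N^2)$ harmonics, each contributing at most $O_g(m^{-1/28+o(1)})$ after absorbing the implicit $(n,k)$-dependence into the smoothness decay of $c_{n,k}$) and use the trivial bound $\|Y_n^k\|_\infty\ll n^{1/2}$ together with the rapid decay of $c_{n,k}$ for $n>N$. Choosing $N$ to be a sufficiently small power of $m$ (say $N=m^\varepsilon$) makes the tail negligible compared to $m^{-1/28+o(1)}$, proving the stated estimate. The one genuinely hard step is the bound for individual Weyl sums, which is not elementary but is invoked as a black box here; the remainder is standard Fourier analysis on $\mathcal{S}^2$.
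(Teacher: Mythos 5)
The paper does not prove this lemma; it cites it as \cite[Lemma 8]{pascbo}, so there is no internal proof to compare against. Your reconstruction follows the standard route to quantitative equidistribution on $\mathcal{S}^2$: spherical-harmonic expansion of $g$, with the constant harmonic producing the main term, Duke/Iwaniec-style Weyl-sum bounds for each nonconstant harmonic, and super-polynomial decay of the coefficients $c_{n,k}$ (from smoothness) to sum up the contributions. The exponent $\delta=1/28$ is indeed the saving coming from Duke's subconvexity bound for half-integral weight forms, and combined with $\mathcal{N}\gg m^{1/2-o(1)}$ this gives the stated $O_g(m^{-1/28+o(1)})$.

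One point worth tightening if you wrote this out in full: in the range $1\leq n\leq N$ you need the implied constant in Duke's bound to be at most polynomial in $n$ \emph{uniformly in $m$}, so that the super-polynomial decay of $c_{n,k}$ genuinely dominates it; you gesture at this (``absorbing the $(n,k)$-dependence'') but it is the one place where a uniformity hypothesis must be carried through. Also, with that polynomial growth in hand and $|c_{n,k}|\ll_{g,A}n^{-A}$ for all $A$, the truncation at $N=m^{\varepsilon}$ is not strictly necessary: the full sum over $n\geq 1$ already converges absolutely to something $O_g(m^{-1/28+o(1)})$, and the separate tail estimate via $\|Y_n^k\|_\infty\ll n^{1/2}$ can be dispensed with. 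Neither issue is a gap; the argument is correct as sketched.
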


For each positive integer $k$, define the $k$-th moment of the normalised inner product of two lattice points
\begin{equation}
\label{Bk}
B_k={B_k^{(3)}}(m):=\frac{1}{m^k\mathcal{N}^2}\sum_{\mu_1,\mu_2\in \mathcal{E}}(\mu_1\cdot\mu_2)^k.
\end{equation}
This arithmetic quantity arises naturally in the computation of the leading term of the variance (see Section \ref{technical}). By the equidistribution of lattice points on spheres, we will now show that each $B_k$ has a unique limit as $m\to\infty$.
\begin{lemma}
\label{innerk}
We have:
\begin{equation*}
B_k
=
\begin{cases}
0 & \text { for odd } k;
\\
\frac{1}{3} & \text { for } k=2;
\\
\frac{1}{k+1}
+
O\left(\frac{1}{m^{1/28-o(1)}}\right)
& \text { for even } k\geq 4, \text { as } m\to\infty, \ m\not\equiv 0,4,7 \pmod 8.
\end{cases}
\end{equation*}
In particular,
\begin{equation}
\label{B4}
B_4
=
\frac{1}{5}
+
O\left(\frac{1}{m^{1/28-o(1)}}\right).
\end{equation}
\end{lemma}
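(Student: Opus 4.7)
The plan is to treat the three cases separately, using only the symmetries of $\mathcal{E}$ and the equidistribution result of Lemma \ref{equidlemma}. The odd $k$ case is immediate: since $\mathcal{E}$ is preserved by the involution $\mu\mapsto-\mu$, the substitution $\mu_1\mapsto-\mu_1$ in the double sum flips the sign of $(\mu_1\cdot\mu_2)^k$ whenever $k$ is odd, forcing $B_k=0$.

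For $k=2$ I would work directly and exactly. Expanding the inner product,
$$\sum_{\mu_1,\mu_2\in\mathcal{E}}(\mu_1\cdot\mu_2)^2 \;=\; \sum_{i,j=1}^{3}\Bigl(\sum_{\mu\in\mathcal{E}}\mu^{(i)}\mu^{(j)}\Bigr)^{2}.$$
The sign-flip symmetry $\mu^{(i)}\mapsto-\mu^{(i)}$ preserves $\mathcal{E}$ and kills the off-diagonal ($i\neq j$) inner sums. The coordinate-permutation symmetry of $\mathcal{E}$ makes the three diagonal ($i=j$) sums equal, each equal to $\tfrac{1}{3}\sum_\mu\|\mu\|^2=m\mathcal{N}/3$. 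Adding the three diagonal contributions gives $m^2\mathcal{N}^2/3$, so $B_2=1/3$ exactly, with no error term.

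For even $k\geq 4$ I would invoke Lemma \ref{equidlemma}. Using $\|\mu\|^2=m$, write $\mu_1\cdot\mu_2=m\,\hat\mu_1\cdot\hat\mu_2$ with $\hat\mu:=\mu/\sqrt{m}\in\mathcal{S}^2$. For fixed $\mu_2$, apply Lemma \ref{equidlemma} to the smooth test function $g_{\hat\mu_2}(z):=(z\cdot\hat\mu_2)^k$ to obtain
$$\frac{1}{\mathcal{N}}\sum_{\mu_1\in\mathcal{E}}(\hat\mu_1\cdot\hat\mu_2)^k \;=\; \int_{\mathcal{S}^2}(z\cdot\hat\mu_2)^k\,\frac{dz}{4\pi}+O_k\!\left(\frac{1}{m^{1/28-o(1)}}\right).$$
By rotation-invariance of the surface measure the integral is independent of $\hat\mu_2$ and evaluates, via a standard spherical-coordinate computation ($z\cdot\hat\mu_2=\cos\theta$), to $\tfrac{1}{2}\int_{-1}^{1}t^k\,dt=1/(k+1)$ for even $k$. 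Averaging the displayed identity over $\mu_2\in\mathcal{E}$ gives the asserted formula for $B_k$, of which \eqref{B4} is the case $k=4$.

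The only genuine point to check is that the error in Lemma \ref{equidlemma} is \emph{uniform} in the parameter $\hat\mu_2$. This is a formality: the family $\{g_{\hat\mu_2}\}_{\hat\mu_2\in\mathcal{S}^2}$ is obtained by precomposing a single fixed polynomial with rotations of $\R^3$, so every Sobolev norm $\|g_{\hat\mu_2}\|_{C^j(\mathcal{S}^2)}$ is independent of $\hat\mu_2$. The implied constants therefore depend only on $k$, and the argument goes through. There is no substantive obstacle beyond this bookkeeping.
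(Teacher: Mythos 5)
Your argument for even $k\geq 4$ is the same as the paper's: reduce to a single spherical average via Lemma \ref{equidlemma} and compute the resulting integral in spherical coordinates (your fixing of $\mu_2$ vs.\ the paper's fixing of $\mu_1$ is immaterial). The one genuine difference is the case $k=2$: the paper simply cites Lemma~2.3 of \cite{rudwi2}, whereas you give a short self-contained derivation by expanding $(\mu_1\cdot\mu_2)^2$ and invoking the sign-flip and coordinate-permutation symmetries of $\mathcal{E}$ to isolate the diagonal moment $\sum_\mu(\mu^{(i)})^2=m\mathcal{N}/3$; this is correct and a pleasant self-sufficiency improvement. Your remark that one must justify uniformity of the $O_g$-constant in Lemma \ref{equidlemma} over the rotated family of test functions is also correct and is a real (if minor) gap that the paper elides; your resolution via rotation-invariance of the $C^j$-norms is the right one.
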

\begin{proof}
For odd $k$, the summands of \eqref{Bk} cancel out in pairs, by the symmetry of the set $\mathcal{E}$. For $k=2$, the result was shown in \cite[Lemma 2.3]{rudwi2}. It remains to prove the case of even $k\geq 4$;
%\begin{equation*}
%\sum_{\mu,\mu'}\langle\mu,\mu'\rangle^2
%=
%\sum_{\mu}\sum_{\mu'}\langle\mu,\mu'\rangle^2
%=
%\sum_{\mu}\frac{mN}{3}|\mu|^2
%=
%\frac{m^2N^2}{3}.
%\end{equation*}
%We write the proof for $k=4$.
we rewrite
\begin{equation*}
B_k
=
\frac{1}{\mathcal{N}^2}\sum_{\mu_1,\mu_2}(\cos(\varphi_{\mu_1,\mu_2}))^k,
\end{equation*}
where $\varphi_{\mu_1,\mu_2}$ is the angle between $\mu_1$ and $\mu_2$. In a moment, we will show that, for all $\mu_1$,
\begin{equation}
\label{equid}
\frac{1}{\mathcal{N}}\sum_{\mu_2}(\cos(\varphi_{\mu_1,\mu_2}))^k
=
\frac{1}{k+1}+O\left(\frac{1}{m^{1/28-o(1)}}\right),
\end{equation}
which implies 
\begin{equation*}
\frac{1}{\mathcal{N}^2}\sum_{\mu_1,\mu_2}(\cos(\varphi_{\mu_1,\mu_2}))^k
=
\frac{1}{\mathcal{N}}\sum_{\mu}\left(\frac{1}{k+1}+O\left(\frac{1}{m^{1/28-o(1)}}\right)\right)
\end{equation*}
hence the result of the present lemma. It remains to show \eqref{equid};
%Recall
%\begin{equation*}
%\nu_m:=\frac{1}{N}\sum_{\mu}\delta_\frac{\mu}{\sqrt{m}}.
%\end{equation*}
apply Lemma \ref{equidlemma} with $g(\cdot)=\cos^k (\varphi_{\mu_1, \cdot} )$:
\begin{equation}
\label{equidcos}
\frac{1}{\mathcal{N}}\sum_{\mu_2}(\cos(\varphi_{\mu_1,\mu_2}))^k
=
\int_{z\in\mathcal{S}^2}(\cos(\varphi_{\mu_1,z}))^k\frac{dz}{4\pi}
+
O\left(\frac{1}{m^{1/28-o(1)}}\right).
\end{equation}
Write $z=(\sin(\theta)\cos(\psi),\sin(\theta)\sin(\psi),\cos(\theta))$, with spherical coordinates $0\leq\theta\leq\pi$ and $0\leq\psi\leq 2\pi$. As the uniform probability measure $\frac{dz}{4\pi}$ on $\mathcal{S}^2$ is rotation invariant, the integral in \eqref{equidcos} is independent of $\mu_1$, and we may rewrite
\begin{multline}
\label{equidcosbis}
\int_{z\in\mathcal{S}^2}(\cos(\varphi_{\mu_1,z}))^k\frac{dz}{4\pi}
=
\int_{z\in\mathcal{S}^2}(\cos(\varphi_{(0,0,1),z}))^k\frac{dz}{4\pi}
\\=
\frac{1}{4\pi}\int_0^{2\pi}d\psi\int_0^{\pi}\cos^k\theta\sin\theta d\theta
%=\frac{1}{2}\left[\frac{-\cos^{k+1}\theta}{k+1}\right]_0^{\pi}
=\frac{1}{k+1}.
\end{multline}
Substituting \eqref{equidcosbis} into \eqref{equidcos} yields \eqref{equid}.
\end{proof}

\section{The proof of Theorems \ref{main4} and \ref{main6}}

\subsection{Length four correlations}
In this subsection, we prove Theorem \ref{main4}.
\label{foursection}
For fixed $\mu_1,\mu_2 \in \mathcal{E}_m$ (with $\mu_1 \neq -\mu_2$), write $\tau=(t_1,t_2,t_3):=-(\mu_1 + \mu_2) $. Clearly any pair of points $\mu_3,\mu_4 \in \mathcal{E}_m$ satisfying 
\begin{align}\label{upairs}
 \mu_3+\mu_4=\tau
\end{align} 
must both lie on the intersection $S(0,m^{1/2} ) \cap S(\tau,m^{1/2} ) $. The resulting intersection is a circle of radius $\rho=(m- \frac{1}{4}\| \tau \|^2)^{1/2}$,  
centered at $\frac{1}{2}\tau$ and is confined to the plane $\left\{ \underline{x} \in \R^3 \left| 2 \tau \cdot \underline{x} = \| \tau \|^2 \right. \right\}$.

As a consequence we may count the number of pairs $(\mu_3,\mu_4)$ satisfying (\ref{upairs}) by estimating the size of the set $\tilde{\mathcal{X}}(\tau)$ consisting of those integer lattice points which lie in the plane
\begin{align}\label{plane}
P: \tau \cdot \underline{x}=0 \qquad \underline{x} \in \R^3
\end{align}
and have norm $2\rho=(4m- \| \tau \|^2)^{1/2}$. A bound for $\mathcal{X}_m(4)$ is then given by

\begin{align}\label{decomp1} 
|\mathcal{X}_m(4)| \ll \sum_{\tau \in \mathcal{E}_m+\mathcal{E}_m}\sum_{ \substack{ \mu_1, \mu_2 \in \mathcal{E}_m\\ \mu_1+\mu_2 = \tau} }  \left( |\tilde{\mathcal{X}}(\tau)|-2\right)_{+},
\end{align}
where the summation takes into account only those pairs $(\mu_1,\mu_2)$ for which $\tilde{\mathcal{X}}(\tau)$ contains at least two non-antipodal points. In the remainder of this subsection we will seek to bound the size of the set $\mathcal{T}:=\{ \tau \in  \mathcal{E}_m+\mathcal{E}_m | \ | \tilde{\mathcal{X}}(\tau)|> 2 \}$.

\begin{prop}\label{4propbis}
With the above notation we have the estimate $|\mathcal{T}| \ll \mathcal{N}^{7/4+o(1)}$.  
\end{prop}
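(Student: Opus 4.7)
\textbf{Proof plan for Proposition \ref{4propbis}.}

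The overarching strategy is to reduce the count of $\mathcal{T}$ to a bound on the number of admissible values of $\|\tau\|^2$, then apply the conversion estimate of Lemma \ref{tausizeconversion}(i). Since $\mathcal{T} \subset \mathcal{E}_m+\mathcal{E}_m$, setting $T := |\{\|\tau\|^2 : \tau \in \mathcal{T}\}|$, part (i) of the lemma yields
\[
|\mathcal{T}| \ll \mathcal{N}^{1+o(1)} T^{1/2},
\]
so the problem reduces to establishing $T \ll \mathcal{N}^{3/2+o(1)}$.

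To analyse $T$, I would first unpack the geometry: for each $\tau \in \mathcal{T}$, the condition $|\tilde{\mathcal{X}}(\tau)| > 2$ combined with the antipodal symmetry of $\tilde{\mathcal{X}}(\tau)$ produces two linearly independent integer vectors $v_1, v_2 \in \Z^3 \cap \tau^\perp$ of common squared norm $n := 4m - \|\tau\|^2$. The orthogonal pair $u := v_1 + v_2$, $w := v_1 - v_2$ then lies in $\Z^3 \cap \tau^\perp$ and satisfies $u \perp w$ (because $\|v_1\| = \|v_2\|$), with $\|u\|^2 + \|w\|^2 = 4n$. Consequently, every $\tau \in \mathcal{T}$ determines a mutually orthogonal triple $(\tau,u,w) \in (\Z^3)^3$ subject to $4\|\tau\|^2 + \|u\|^2 + \|w\|^2 = 16m$, and $\tau$ is parallel to $u \times w$ with prescribed length $(4m-n)^{1/2}$.

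Next, I would bound $T$ by counting the possible values of $\|\tau\|^2$ arising from such triples. The integrality condition forces $\|\tau\|^2 = (4m - n) = k^2 \cdot |A_0|^2$ where $A_0$ is the primitive form of $u \times w$ and $k \in \N$, imposing a divisibility structure on $\|\tau\|^2$. I would then dyadically decompose the range $(0,4m]$ into intervals $J$ and, within each $J$, apply Lemma \ref{tausizeconversion}(ii) with divisibility parameter $a = |A_0|^2$ (for each relevant $A_0$) to control the number of $\tau \in \mathcal{T} \cap \mathscr{J}_m(J,a)$. Summing over dyadic intervals and over the relevant primitive directions $A_0$, together with Jarn\'ik's bound \eqref{taurep} to absorb the overcount factor of $\binom{|\tilde{\mathcal{X}}(\tau)|/2}{2} \ll \mathcal{N}^{o(1)}$ between $\tau$ and its witness pairs, should yield $T \ll \mathcal{N}^{3/2+o(1)}$.

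The main obstacle is extracting the correct divisibility relation for $\|\tau\|^2$ and handling the trade-off in Lemma \ref{tausizeconversion}(ii) between the two terms $|J|/((a_m)^{1/2}a')$ and $\mathcal{N}/(a')^{1/2}$: one must balance the dyadic length $|J|$ against the size of the divisibility modulus $a$, so that summing the dyadic contributions does not overshoot the target exponent $3/2$. This balancing, rather than any single application of a sharp estimate, is where the heart of the argument lies.
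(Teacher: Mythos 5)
Your proposal has a genuine gap, and the central issue is the step that reduces the problem to bounding $T := |\{\|\tau\|^2 : \tau \in \mathcal{T}\}|$. Applying Lemma \ref{tausizeconversion}(i) globally does trade $|\mathcal{T}| \ll \mathcal{N}^{1+o(1)} T^{1/2}$, but you then claim it suffices to show $T \ll \mathcal{N}^{3/2+o(1)}$, and nothing in your outline actually produces a bound on $T$ (a count of \emph{distinct norm values}). The later steps you propose --- partitioning into $\mathscr{J}_m(J,a)$ with $a = \|A_0\|^2$ and applying part (ii) of the same lemma --- bound the number of \emph{vectors} $\tau$ in each class, not the number of distinct norms, so they do not connect back to $T$. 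In effect you reduce to one quantity and then estimate a different one. Moreover the reduction itself discards information: the paper does not bound $T$ globally; it bounds the number of achievable $n = 4m - \|\tau\|^2$ \emph{separately within each dyadic block} $(r,s) \in [R,2R]\times[S,2S]$, playing that count (which gives the third estimate $\mathcal{N}^{1+o(1)}(RS)^{1/2}$ in Lemma \ref{TRSbound}) against the two estimates from part (ii). A single global $T$-bound of the required strength is neither established nor obviously true.

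The deeper missing ingredient is the Diophantine factorization \eqref{4factor}. Your geometric reformulation is genuinely nice: setting $u=v_1+v_2$, $w=v_1-v_2$ one does get a mutually orthogonal integer triple $(\tau,u,w)$ in $\Z^3$ with $4\|\tau\|^2+\|u\|^2+\|w\|^2 = 16m$, and indeed $A_0$ is the primitive direction $\tau'=\tau/\gcd(\tau)$, so the divisibility $\gcd(\tau)^2 \mid \|\tau\|^2$ you extract is the same constraint the paper uses when it sets $a = s^2$ in part (ii). But this gives only two of the three estimates in Lemma \ref{TRSbound} (the $m/S$-type and $Sm^2/R^2$-type bounds). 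The third, which counts the possible $n$ in each $(R,S)$-range by enumerating divisors of $16mr^2s^2+r^4$, comes precisely from writing the identity $n^2 - (\underline{C}\cdot\underline{D})^2 = \|\underline{C}\times\underline{D}\|^2 = (r/s)^2\|\tau\|^2$ and rearranging it into a difference of two squares. Your outline has no analogue of this factorization and no analogue of the parameter $r = k_1 l_2 - k_2 l_1$ (the area of the witness parallelogram in $P\cap\Z^3$), and without that third bound the two you do retain are insufficient to close the argument after summing over the dyadic ranges. You would need to work out, in the $(\tau,u,w)$ picture, what replaces the factored identity --- it is not a balancing of $|J|$ against $a$, as you speculate, but an entirely separate arithmetic input.
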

\noindent
Let us first prove Theorem \ref{main4} assuming Proposition \ref{4propbis}.
\begin{proof}[Proof of Theorem \ref{main4}]
Following  (\ref{taurep}) one has the general upper bound $|\tilde{\mathcal{X}}(\tau)| \ll m^{o(1)}$ whenever $\tau \neq 0$. Inserting both this estimate and the bound of Proposition \ref{4propbis} into (\ref{decomp1}) we get (\ref{fourcorr}).
\end{proof}

{\bf The proof of Proposition \ref{4propbis}.}
In order to understand $ \tilde{\mathcal{X}}(\tau)$, we begin with a simple description of $P \cap \Z^3$. Recalling the notation $\tau=(t_1,t_2,t_3)$ let us first set $\gcd(t_1,t_2,t_3)=s$ and write $\tau'=(t_1',t_2',t_3'):=\frac{1}{s} \tau$. Since $\tau'$ is primitive, the lattice $P \cap \Z^3$ has determinant $\| \tau' \|$ (cf. the corollary \cite[page 25]{cassel}) and hence there exist vectors $\underline{A}, \underline{B} \in \Z^3$ with $\underline{A} \times \underline{B}= \tau'$. 
 A generic lattice point in $P$ may be expressed as $k \underline{A}+l \underline{B}$  with $k,l \in \Z$.\\
Let us suppose $\tau \in \mathcal{T}$  and write $n:=4m-\| \tau \|^2$. As $\tau \in \mathcal{T}$, there must be two non-antipodal vectors $\underline{C}=k_1 \underline{A}+l_1 \underline{B}$ and $\underline{D}=k_2 \underline{A}+l_2 \underline{B}$ for which
\begin{equation*}
\|\underline{C} \|^2=\|\underline{D} \|^2=n.
\end{equation*}
Setting $r:=k_1 l_2-k_2 l_1$ we observe that $\underline{C} \times \underline{D}=r (\underline{A} \times \underline{B})=\frac{r}{s} \tau$ and record the inequality
\begin{equation}\label{tausize2}
\| \tau \|^2 \leq \frac{16 s^2 m^2}{r^2}.
\end{equation}
Moreover, noting that $\| \underline{C} \times \underline{D} \|^2=n^2-( \underline{C} \cdot \underline{D})^2$ we obtain the identity

$$\frac{s^2}{r^2} \left(n^2-( \underline{C} \cdot \underline{D})^2  \right) +n=4m.$$
Multiplying both sides of the equation by $4r^2 s^2$  one gets the rearranged expression
$$(2 s^2 n +r^2)^2-(2s^2 \left(\underline{C} \cdot \underline{D} \right)  )^2  =16mr^2 s^2  +r^4$$
and hence
\begin{align}\label{4factor}
\left(2 s^2 n +r^2- 2s^2 \left(\underline{C} \cdot \underline{D} \right) \right) \left(2 s^2 n +r^2+2s^2 \left(\underline{C} \cdot \underline{D} \right) \right)  =16mr^2 s^2  +r^4.
\end{align}
Assuming the equation (\ref{4factor}) has solutions, there must exist a positive $d|16mr^2 s^2  +r^4$ (given by either factor on the LHS of \eqref{4factor}) so that 
\begin{align}\label{nsd}
4 s^2 n +2 r^2=d+ \frac{1}{d} \left(16mr^2 s^2  +r^4\right).
\end{align} 
To count the number of vectors $\tau \in \mathcal{T}$ we will consider equation (\ref{nsd}) in each dyadic interval $r \in [R,2R], s \in [S,2S]$. Here $R$ and $S$ are dyadic powers in the ranges $1 \leq R \leq 2m$ and $1 \leq S \leq m^{1/2}$. 
\begin{lemma}\label{TRSbound}
With $R,S$ as above let $\mathcal{T}(R,S) $ denote the set of $\tau \in \mathcal{T}$ which satisfy equation $(\ref{nsd})$ for some pair of integers $(r,s) \in [R,2R] \times [S,2S]$. Then 
\begin{align}\label{TRS}
|\mathcal{T}(R,S)| \ll \mathcal{N}^{o(1)} \min\left(\frac{m}{S}+\mathcal{N}, \frac{S m^2}{R^2}+\mathcal{N}, \mathcal{N} R^{1/2} S^{1/2} \right).
\end{align}

\begin{proof}
Given $\tau \in \mathcal{T}(R,S)$ with its associated quadruple $(n,r,s,d)$ we recall that 
\begin{align*}
\| \tau \|^2 \equiv 0 \bmod s^2.
\end{align*} 
Setting $s_m:=\gcd(s,m)$ we may write $s=s_m s'$ and put $\nu:=\gcd(s^2, m)$. Clearly $s_m| \nu$ and $\nu | (s_m)^2$ so we are led to a decomposition of the form
$$\nu=s_m \sigma_1, \qquad s_m=\sigma_1 \sigma_2$$
which yields $s^2=\nu (\sigma_2(s')^2)$.
It follows from Lemma \ref{tausizeconversion} part (ii) (with $J=(0,4m)$ and $a=s^2$) and the inequality $(s_m \sigma_1)^{1/2} \sigma_2 \geq (s_m \sigma_1 \sigma_2)^{1/2}=s_m$ that
\begin{align}\label{trsbound}
|\mathcal{T}(R,S)|& \ll  \sum_{\substack{  s_m |m\\ s_m \leq 2S }}  \sum_{s' \asymp S/s_m} \sum_{\sigma_1 \sigma_2 = s_m } \mathcal{N}^{o(1)}\left( \frac{m}{ (s_m \sigma_1)^{1/2} \sigma_2 (s')^2}+\frac{\mathcal{N}}{(\sigma_2)^{1/2} s'} \right) \\ \notag
& \ll  \sum_{\substack{  s_m |m\\ s_m \leq 2S }} \sum_{s' \asymp S/s_m} \mathcal{N}^{o(1)}\left( \frac{m}{s_m (s')^2}+\frac{\mathcal{N}}{s'} \right)  \\ \notag
&\ll \mathcal{N}^{o(1)} \left(\frac{m}{S}+ \mathcal{N} \right),
\end{align}
yielding the first inequality in (\ref{TRS}). In light of \eqref{tausize2} we may reuse the estimates given in (\ref{trsbound}), this time applying Lemma \ref{tausizeconversion} part (ii) with the interval $J=(0, 16s^2 m^2/R^2)$. The bound $|\mathcal{T}(R,S)| \ll \mathcal{N}^{o(1)} (Sm/R^2+\mathcal{N})$ follows readily.\\ 
A brief inspection of (\ref{nsd}) reveals that for each choice of $(r,s) \in [R,2R] \times [S,2S] $ and each choice of divisor $d | 16mr^2 s^2  +r^4$, the value of $n$ is uniquely determined. In this manner we get $O(\mathcal{N}^{o(1)} RS)$ possible values of $n$ and hence the final estimate in (\ref{TRS}) follows from an application of Lemma \ref{tausizeconversion} part (i). 

\end{proof}
\end{lemma}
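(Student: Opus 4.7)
The plan is to derive each of the three upper bounds on $|\mathcal{T}(R,S)|$ separately, exploiting the structure of equation \eqref{nsd} together with the two counting results in Lemma~\ref{tausizeconversion}.

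For the bound $m/S + \mathcal{N}$, I would start from the observation that $\|\tau\|^2 = s^2 \|\tau'\|^2$ is an integer multiple of $s^2$, so $\tau$ lies in a set of the form $\mathscr{J}_m(J,a)$ with $J = (0, 4m)$ and $a = s^2$. Since gcd's with $m$ control the strength of Lemma~\ref{tausizeconversion}(ii), I would set $s_m = \gcd(s,m)$, write $s = s_m s'$, and decompose $\gcd(s^2, m) = s_m \sigma_1$ with $s_m = \sigma_1 \sigma_2$. Applying Lemma~\ref{tausizeconversion}(ii) and then summing over $s_m \mid m$ with $s_m \leq 2S$ and over $s' \asymp S/s_m$ should give the stated bound.

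For the bound $Sm^2/R^2 + \mathcal{N}$, I would use the geometric inequality \eqref{tausize2}, which forces $\|\tau\|^2 \leq 16 s^2 m^2 / r^2$. This lets me repeat the previous argument with $J$ replaced by the shorter interval $(0, 16 s^2 m^2/R^2)$; Lemma~\ref{tausizeconversion}(ii) then delivers the improved length contribution. For the bound $\mathcal{N} R^{1/2} S^{1/2}$, the strategy is instead to count the number $T$ of distinct values of $n = 4m - \|\tau\|^2$ attained by elements of $\mathcal{T}(R,S)$ and feed the estimate into Lemma~\ref{tausizeconversion}(i). A direct inspection of \eqref{nsd} reveals that, for each triple $(r,s,d)$ with $r \asymp R$, $s \asymp S$ and $d \mid (16mr^2 s^2 + r^4)$, the value of $n$ is uniquely determined. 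Using the standard divisor bound $\tau(k) = k^{o(1)}$ together with $k \ll m^{O(1)}$ gives $T \ll \mathcal{N}^{o(1)} RS$, and then Lemma~\ref{tausizeconversion}(i) yields $|\mathcal{T}(R,S)| \ll \mathcal{N}^{1+o(1)} (RS)^{1/2}$.

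The main obstacle is the careful bookkeeping of $\gcd$-factors in the first two cases. After an application of Lemma~\ref{tausizeconversion}(ii), the inner expression contains $(s_m \sigma_1)^{1/2} \sigma_2$ in the denominator; collapsing this back to $s_m$ (so that the summation over divisors $s_m \mid m$ converges tidily) relies on the elementary inequality $(s_m \sigma_1)^{1/2} \sigma_2 \geq (s_m \sigma_1 \sigma_2)^{1/2} = s_m$, together with the fact that the divisor sum over $s_m = \sigma_1 \sigma_2$ only contributes an $m^{o(1)}$ factor. The third bound is arithmetically cleaner but hinges on the key observation that each divisor $d$ of $16 m r^2 s^2 + r^4$ produces at most one admissible $n$, which is what allows the divisor bound to be invoked in lieu of any nontrivial arithmetic input.
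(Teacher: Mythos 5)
Your proposal follows precisely the same argument as the paper: the first bound via Lemma \ref{tausizeconversion}(ii) with $J=(0,4m)$ and $a=s^2$ using the decomposition $\gcd(s^2,m)=s_m\sigma_1$, $s_m=\sigma_1\sigma_2$ and the collapsing inequality $(s_m\sigma_1)^{1/2}\sigma_2\geq s_m$; the second by shrinking $J$ to $(0,16s^2m^2/R^2)$ via \eqref{tausize2}; and the third by observing that each triple $(r,s,d)$ with $d\mid 16mr^2s^2+r^4$ determines $n$ uniquely, giving $O(\mathcal{N}^{o(1)}RS)$ values of $n$ to feed into Lemma \ref{tausizeconversion}(i). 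This is a correct match to the paper's proof.
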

To conclude the proof of Proposition \ref{4propbis}, %case I
we note that $|\mathcal{T}|\leq \sum_{R,S} |\mathcal{T}(R,S)|$ and apply the estimates of Lemma \ref{TRSbound} to get
\begin{align*}
|\mathcal{T}| &\ll \sum_{   \substack{R\leq 2 m,S \leq m^{1/2} \\ \text{dyadic} }    } \mathcal{N}^{o(1)}   \min\left( \mathcal{N} R^{1/2} S^{1/2} ,\frac{Sm^2}{R^2} ,\frac{m}{S} \right) + \mathcal{N}^{1+o(1)}.
\end{align*}
For fixed $S$, the largest possible value of $\min( \mathcal{N} R^{1/2} S^{1/2} ,Sm^2/R^2)$ occurs when $R \asymp S^{1/5}m^{4/5}/\mathcal{N}^{2/5}$. Recalling the relation between $m$ and $\mathcal{N}$ \eqref{totnumlp},
\begin{equation*}
\min( \mathcal{N} R^{1/2} S^{1/2} ,Sm^2/R^2)\ll\mathcal{N}^{8/5+o(1)} S^{3/5}.
\end{equation*}
It follows that
\begin{align*}
|\mathcal{T}| \ll \sum_{   \substack{S \leq m^{1/2}  }  } \mathcal{N}^{o(1)}   \min\left(\mathcal{N}^{8/5} S^{3/5} ,\frac{m}{S} \right) + \mathcal{N}^{1+o(1)}  \ll \mathcal{N}^{7/4 + o(1)}.
\end{align*}

\subsection{Length six correlations} \label{sixsection}
In this subsection, we prove Theorem \ref{main6}. The key ingredient is the incidence bound \cite[Theorem 6.4]{Zar}, which we state below in a simplified form.
\\
Given a collection of points $\mathcal{P}$ and a collection of varieties $\mathcal{V}$, we define
\begin{equation*}
I(\mathcal{P},\mathcal{V}):=|\{(p,V) \in  \mathcal{P} \times \mathcal{V}\ | \ p \in V \}|
\end{equation*}
to be the number of incidences between $\mathcal{P}$ and $\mathcal{V}$.\\
We will use the standard notation $K_{s,t}$ for complete bipartite graphs. Given graphs $G$ and $H$, we say $G$ is $H$-free if it does not contain an induced subgraph isomorphic to $H$.

\begin{thm}\cite{Zar}\label{genipv}
Let $\mathcal{P} \subset \R^3$ be a set of $k$ points and $\mathcal{V} $ a collection of $n$ varieties of bounded degree in $\R^3$. Assuming the incidence graph of $\mathcal{P}\times \mathcal{V} $ is $K_{s,t}$-free there exists, for each $\varepsilon >0$, a positive constant $c=c(\varepsilon)$ so that
\begin{align}\label{gentinc}
I(\mathcal{P},\mathcal{V} )\leq  st c\left(  k^{ \frac{2s}{3s-1}+\varepsilon  } n^{\frac{3(s-1) }{3s-1}  } + (k+n) \right).
\end{align}
\end{thm}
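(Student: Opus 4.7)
The plan is to establish the bound by the polynomial partitioning method of Guth–Katz combined with the Kővári–Sós–Turán inequality, arguing by induction on $k+n$.

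First, I would dispose of the extreme parameter ranges via a direct Kővári–Sós–Turán estimate: since the incidence graph is $K_{s,t}$-free,
\[
I(\mathcal{P}, \mathcal{V}) \ll_{s,t} n k^{1-1/s} + n
\]
(and symmetrically $k n^{1-1/s} + k$). A short calculation shows that these bounds already dominate the target in the unbalanced regimes $n \gg k^{(3s-1)/(3(s-1))}$ and $n \ll k^{(3s-1)/(3s)}$, reducing the problem to the balanced range where $n$ and $k$ are polynomially comparable.

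Second, in the balanced range I invoke polynomial partitioning in $\R^3$: for every integer $D \geq 1$ there is a nonzero polynomial $f$ of degree at most $D$ such that $\R^3 \setminus Z(f)$ decomposes into $O(D^3)$ open cells each containing $O(k/D^3)$ points of $\mathcal{P}$. The essential geometric input is that any bounded-degree $2$-dimensional variety $V \not\subset Z(f)$ meets at most $O(D^2)$ cells, by a Milnor–Thom/Bezout argument. Split $\mathcal{V} = \mathcal{V}_0 \sqcup \mathcal{V}_1$ according to whether $V \subset Z(f)$, and let $k_i$, $n_i$ count the points and varieties of $\mathcal{V}_1$ meeting the $i$-th cell. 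Applying the inductive hypothesis within each cell, with $\alpha := \tfrac{2s}{3s-1}$ and $\beta := \tfrac{3(s-1)}{3s-1}$, and summing via Hölder using $\sum_i n_i \ll D^2 n$, the crucial arithmetic identity
\[
3\alpha + \beta = \frac{6s + 3(s-1)}{3s-1} = 3
\]
(verified by direct computation, and precisely the reason these exponents appear) yields
\[
\sum_i I(\mathcal{P}_i, \mathcal{V}_i) \ll c(\varepsilon)\, k^{\alpha + \varepsilon} n^{\beta} \cdot D^{-3\varepsilon} + \text{(lower order)},
\]
so choosing $D$ large in terms of $\varepsilon$ produces a factor $D^{-3\varepsilon} < \tfrac{1}{2}$ which absorbs the constants inherited from the induction and closes the step.

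Third, handle the incidences associated with $Z(f)$ separately: varieties in $\mathcal{V}_0$ decompose into bounded unions of the $\leq D$ irreducible components of $Z(f)$, so that $|\mathcal{V}_0|$ is polynomially bounded in $D$ and these incidences contribute at most $O(D^{O(1)} k)$; varieties in $\mathcal{V}_1$ meet $Z(f)$ in curves of bounded degree, reducing the residual incidences to a planar point–curve problem on the two-dimensional variety $Z(f)$. The latter is handled by a structurally identical polynomial partitioning argument in dimension $2$, resting on the Pach–Sharir bound for $K_{s,t}$-free planar point–curve incidences as its base case. The principal obstacle is the bookkeeping of this nested induction — simultaneously on $k+n$ and on the ambient dimension — and in particular verifying that the constants accumulated at each inductive step, together with the contribution from the lower-dimensional sub-problem on $Z(f)$, are absorbed by the gain $D^{-3\varepsilon}$; it is precisely this accumulation that forces the unavoidable $\varepsilon$ loss in the exponent $\tfrac{2s}{3s-1} + \varepsilon$, and fixes the admissible form of the constant $c = c(\varepsilon)$.
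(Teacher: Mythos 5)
Your high-level strategy --- polynomial partitioning in $\R^3$, a K\H{o}v\'ari--S\'os--Tur\'an estimate to dispose of unbalanced ranges, double induction on $k+n$ and on dimension, and the arithmetic identity $3\alpha+\beta=3$ that makes the H\"older bookkeeping over cells close --- matches the paper's Appendix \ref{incidencesection}, which is a quantitative reworking of \cite[Theorems 4.3 and 6.4]{Zar}. But you have missed the reason the paper reproves the theorem at all: as the Remark following Theorem \ref{genipv} stresses, what is needed (and what \cite[Theorem 1.2]{Zar} does not state) is the \emph{explicit linear} dependence $st\cdot c(\varepsilon)$, because downstream one takes $t=\mathcal{N}^{\varepsilon}$ and a constant of the form $c(s,t,\varepsilon)$ absorbing $t$ would be useless. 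Your proposal hides $t$ inside $\ll_{s,t}$ throughout; to close the argument one must use the quantitative K\H{o}v\'ari--S\'os--Tur\'an bound $I\leq(t-1)^{1/s}kn^{1-1/s}+(s-1)n$ (the paper's Theorem \ref{KST}) and verify that a clean $st$ factor survives each of the three classes of incidences near the partition hypersurface $Z(f)$. In the paper this happens in the $\mathcal{I}_2$ term, where $K_{s,t}$-freeness of the incidence graph restricted to components of $X_f$ gives $|\mathcal{I}_2|\leq lsn+tk_f$; nothing in your write-up does this accounting.

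There is also a concrete flaw in your treatment of $\mathcal{V}_0$ (those $V\subset Z(f)$): you assert they are bounded unions of the $\leq D$ irreducible components of $Z(f)$, hence $|\mathcal{V}_0|=O(D^{O(1)})$. That is false when some $V$ have dimension $\leq 1$ --- a curve lying inside the surface $Z(f)$ is not a union of its two-dimensional components, and $|\mathcal{V}_0|$ can be as large as $n$. (It happens not to bite in the paper's application, where the varieties are spheres, but it breaks your proof of the general statement.) The paper's argument avoids this by splitting the near-$Z(f)$ incidences into $\mathcal{I}_1$ (variety properly intersects the component containing the point: inducted on dimension) and $\mathcal{I}_2$ (variety contained in that component: bounded directly by $K_{s,t}$-freeness). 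Finally, the paper's dimension induction bottoms out at the trivial case $e=0$, not at a planar Pach--Sharir base; the latter is not wrong in principle, but it would require a separately supplied $K_{s,t}$-tracked planar bound, which you have not provided.
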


{\bf Remark.} The inequality (\ref{gentinc}) gives a polynomial dependence in $t$ which will be crucial to the argument in this subsection. Although not explicitly stated in the above form  one can follow the proofs given in \cite[Theorems 4.3 and 6.4]{Zar} and keep track of all the constants involved. We will carry out these straightforward modifications in Appendix \ref{incidencesection}.

To prove Theorem \ref{main6}, we will apply Theorem \ref{genipv} with the set of points $\mathcal{P}= \mathcal{E}+\mathcal{E} $ and varieties $\mathcal{S}=\left\{S(\underline{A}, m^{1/2}) \ |\  \underline{A} \in \mathcal{E}+ \mathcal{E}+\mathcal{E} \right\}$. For fixed $\varepsilon>0$ and $m$ sufficiently large we set $s=2$ and $t=\mathcal{N}^{\varepsilon}$ and observe that, by \eqref{taurep},   
the incidence graph of $\mathcal{P} \times \mathcal{S}$ is $K_{s,t}$-free.\\ 
The remainder of the argument is carried out as in  \cite[Section 2]{bombou} with Theorem \ref{genipv} replacing the Szemer\'edi-Trotter Theorem. For any dyadic power $D \geq 1$ denote by $\mathcal{S}(D)$ the collection of spheres $S=S(\underline{A}, m^{1/2} ) \in \mathcal{S}$ for which $|S \cap \mathcal{P}| \asymp D$. Recalling (\ref{taurep}) we gather that 
\begin{align}\label{Darrangement}
|\mathcal{C}_m(6) | \ll_{\varepsilon} \mathcal{N}^{ \varepsilon }\sum_{\substack{  D \leq \mathcal{N}    \\  \text{dyadic} }} D^2  |\mathcal{S}(D)|.
\end{align}

\begin{lemma}
For $D \leq \mathcal{N}$ we have the estimates 
$$ (i)\  D |\mathcal{S}(D)| \ll \mathcal{N}^3, \qquad (ii) \  D^{5/2} |\mathcal{S}(D)| \ll \mathcal{N}^4.$$
\begin{proof}
(i) For each  $\tau \in \mathcal{P}= \mathcal{E}+\mathcal{E}$ denote by $\mathcal{S}_{\tau}(D)$ the collection of spheres in $\mathcal{S}(D)$ which are incident to $\tau$ . Then we have the trivial bound
$$D |\mathcal{S}(D)| \leq I(\mathcal{P},\mathcal{S}(D)  ) \leq \sum_{\tau \in \mathcal{P} } |\mathcal{S}_{\tau}(D)|\leq \mathcal{N}^3.$$
(ii) We first note the inequality $|\mathcal{S}(D)|\leq |\mathcal{S}(D)|^{3/5} |\mathcal{P}|^{4/5}$ which follows easily from the rearranged statement $|\mathcal{S}(D)|\leq \mathcal{N}^4$. Applying Theorem \ref{genipv} we get the bound 
\begin{align}\label{incidence}
D |\mathcal{S}(D)| \ll I(\mathcal{P},\mathcal{S}(D)  )\ll_{\varepsilon}  |\mathcal{S}(D)|^{3/5} |\mathcal{P}|^{4/5 +\varepsilon}+ |\mathcal{S}(D)| +|\mathcal{P}| \leq |\mathcal{S}(D)|^{3/5} |\mathcal{P}|^{4/5 +\varepsilon} +|\mathcal{P}| .
\end{align}
When the first term on the RHS of (\ref{incidence}) dominates one finds that $D |\mathcal{S}(D)|\ll |\mathcal{S}(D)|^{3/5} |\mathcal{N}|^{8/5} $ which gives $D^{5/2} |\mathcal{S}(D)| \ll \mathcal{N}^4$. When the second term on the RHS dominates we get $D |\mathcal{S}(D)| \ll \mathcal{N}^2$ so that
$$D^{5/2} |\mathcal{S}(D)|\leq D |\mathcal{S}(D)| \mathcal{N}^{3/2} \ll \mathcal{N}^{7/2}.$$
\end{proof}
\end{lemma}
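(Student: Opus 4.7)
My plan is to derive both estimates from the elementary bound $D \cdot |\mathcal{S}(D)| \ll I(\mathcal{P},\mathcal{S}(D))$, which is immediate from the defining property of $\mathcal{S}(D)$. It then remains to provide suitable upper bounds on the incidence count.

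For part (i), I would simply fix each $\tau \in \mathcal{P}=\mathcal{E}+\mathcal{E}$ and count the spheres of $\mathcal{S}$ through it. A sphere $S(\underline{A}, m^{1/2})$ contains $\tau$ iff $\underline{A} - \tau \in \mathcal{E}$, so the center has at most $|\mathcal{E}| = \mathcal{N}$ admissible positions. Summing over $\tau \in \mathcal{P}$ yields the trivial incidence bound $I(\mathcal{P},\mathcal{S}(D)) \leq |\mathcal{P}| \cdot \mathcal{N} \leq \mathcal{N}^{3}$, giving (i).

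For part (ii), I would invoke Theorem \ref{genipv} with $s=2$ and $t = \mathcal{N}^{\varepsilon}$ for arbitrarily small $\varepsilon > 0$. To check the required $K_{s,t}$-freeness of the incidence graph, I would observe that spheres of $\mathcal{S}$ through two fixed points $\tau_1, \tau_2 \in \mathcal{P}$ have centers $\underline{A}$ lying on the circle of intersection of $S(\tau_1, m^{1/2})$ and $S(\tau_2, m^{1/2})$, and the constraint $\underline{A} - \tau_1 \in \mathcal{E}$ further confines these centers to the lattice points of $\mathcal{E}$ on the containing plane, of which there are at most $\kappa_3(m) \ll \mathcal{N}^{o(1)}$ by Jarn\'ik's bound \eqref{taurep}. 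Theorem \ref{genipv} then yields
$$D |\mathcal{S}(D)| \ll_{\varepsilon} |\mathcal{P}|^{4/5+\varepsilon} |\mathcal{S}(D)|^{3/5} + |\mathcal{P}| + |\mathcal{S}(D)|,$$
and I would split into three cases depending on which term dominates. When the first term dominates, rearrangement yields $D^{5/2}|\mathcal{S}(D)| \ll |\mathcal{P}|^{2+O(\varepsilon)} \ll \mathcal{N}^{4+o(1)}$. When the $|\mathcal{P}|$ term dominates, one has $D|\mathcal{S}(D)| \ll \mathcal{N}^2$ and then using the hypothesis $D \leq \mathcal{N}$ one deduces $D^{5/2}|\mathcal{S}(D)| \leq D^{3/2}\mathcal{N}^2 \leq \mathcal{N}^{7/2}$. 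In the remaining case $D|\mathcal{S}(D)| \ll |\mathcal{S}(D)|$ forces $D = O(1)$, whence $D^{5/2}|\mathcal{S}(D)| \ll |\mathcal{S}(D)| \ll \mathcal{N}^{3}$.

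The principal obstacle is the verification of the $K_{s,t}$-freeness hypothesis of Theorem \ref{genipv} for our specific configuration; this in turn hinges on Jarn\'ik's arithmetic bound for lattice points on plane sections of spheres. Once this input is in hand, the remaining case analysis is a routine dyadic maneuver that parallels the Bombieri--Bourgain argument \cite{bombou} in dimension $2$, with the $\R^3$ incidence bound of \cite{Zar} taking over the role previously played by the Szemer\'edi--Trotter Theorem.
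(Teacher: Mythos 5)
Your proof is correct and follows essentially the same approach as the paper's: the trivial pointwise count for (i), and an application of the incidence theorem (Theorem \ref{genipv}) with $s=2$, $t=\mathcal{N}^\varepsilon$, $K_{s,t}$-freeness via Jarn\'ik, followed by a case split on which term of the incidence bound dominates for (ii). The only cosmetic differences are that the paper absorbs the $|\mathcal{S}(D)|$ term into the main term at the outset via the inequality $|\mathcal{S}(D)|\leq|\mathcal{S}(D)|^{3/5}|\mathcal{P}|^{4/5}$ (leaving only two cases), whereas you treat it as a third case; and you track the $\mathcal{N}^{o(1)}$ loss explicitly, which is if anything more careful than the paper's statement and is harmlessly absorbed in the final dyadic sum.
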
 
Combining the estimates of the lemma with (\ref{Darrangement}) we get 
$$|\mathcal{C}_m(6) | \ll_{\varepsilon} \mathcal{N}^{\varepsilon } \sum_{D \leq \mathcal{N}} D\cdot  \min \left(\mathcal{N}^3, \frac{\mathcal{N}^4 }{D^{3/2}} \right) \ll_{\varepsilon} \mathcal{N}^{11/3 + \varepsilon},$$
which completes the proof of Theorem \ref{main6}.

\subsection{Long correlations }
In this section we will prove Corollary \ref{maincorro} via an analytic argument. We introduce the function $f(\underline{ \alpha}):= \sum_{\mu \in \mathcal{E}_m} e(\mu \cdot \underline{ \alpha})$ and observe that, by orthogonality,
\begin{align}\label{circleidea}
|\mathcal{C}_m( \ell)|=\int_{[0,1]^3} f(\underline{ \alpha})^{ \ell} \ d \underline{ \alpha}.
\end{align}

\subsubsection{An upper bound for $|\mathcal{C}_m( \ell)|$} 

Let $\ell \geq 6$ and observe that one has the trivial bound $|f(\underline{\alpha})| \leq \mathcal{N}$.  Combining the estimate (\ref{sixcorr}) with (\ref{circleidea}) it follows that 
$$|\mathcal{C}_m( \ell)| \leq \mathcal{N}^{  \ell -6}  \int |f(\underline{ \alpha})|^{6}  \ d \underline{ \alpha} 
\ll_{\varepsilon}   \mathcal{N}^{ \ell -7/3+\varepsilon}.$$
{\bf Remark.} To conclude the discussion of the upper bounds we record the straightforward estimate
$$|\mathcal{C}_m(5)|\leq \int_{[0,1]^3} |f(\underline{ \alpha})|^{5} \ d \underline{ \alpha} \leq  
\left( \int_{[0,1]^3} |f(\underline{ \alpha})|^{4} \ d \underline{ \alpha}\right) ^{1/2}  
 \left(\int_{[0,1]^3} |f(\underline{ \alpha})|^{6} \ d \underline{ \alpha} \right)^{1/2} \ll_{\varepsilon} \mathcal{N}^{17/6 + \epsilon} $$ 
and note that $|\mathcal{C}_m(2)|= \mathcal{N}$ while $|\mathcal{C}_m(3)| \ll \mathcal{N}^{ 1+ o(1)}$ (as a consequence of (\ref{taurep})).

\subsubsection{A lower bound for $|\mathcal{X}_m( \ell)|$} 
Let $\ell \geq 8$ be even and recall the notation $\mathcal{D}( \ell)$ and $\mathcal{D}'( \ell)$ for the set of degenerate and symmetric tuples respectively. Observe that the degenerate tuples in $\mathcal{C}_m( \ell)$ number at most
$$|\mathcal{D}( \ell)|
=
|\mathcal{D}'( \ell)|+\sum_{\substack{    \substack{ 2 \leq j_1 \leq ... \leq j_k \leq \ell-2 \\ j_1+...+j_k= \ell}  \\ 3 \leq j_k  }     } \ \prod_{i=1}^{k}|\mathcal{C}_m( j_i)| \ll \mathcal{N}^{\ell /2} +\mathcal{N}^{(\sum_{i \leq k} j_i )-1-7/3  + o(1)} \ll \mathcal{N}^{\ell - 10/3 + o(1)} $$ 
with the largest contribution coming from the multi-index $j_1=2, j_2= \ell -2$. As a result it will suffice to prove the asserted lower bound in (\ref{ellcorr}) for $|\mathcal{C}_m( \ell)|$.

Consider the set $A:=\left\{ \alpha \in [0,1]^3 | \ | f(\underline{ \alpha})| \geq \mathcal{N}/2 \right\}$. Since $f(\underline{0})=\mathcal{N} $ and $f$ has partial derivatives of size at most $m^{1/2 } \mathcal{N} \ll_{\varepsilon} \mathcal{N}^{2+ \varepsilon}  $, we gather that
\begin{equation}\label{originmass}
|f(\underline{ \alpha})-\mathcal{N}|=|f(\underline{ \alpha})-f(\underline{0})| \leq ||\nabla f||_{\infty} \cdot ||\underline{\alpha}||\ll_{\varepsilon} \mathcal{N}^{2+ \varepsilon} ||\underline{\alpha}||.
\end{equation} 
It follows that $|f(\underline{ \alpha})|\geq \mathcal{N}/2$ whenever $||\underline{\alpha}||\ll_{\varepsilon} \mathcal{N}^{-1-\varepsilon}$ and hence the Lebesgue measure of $A$ is bounded from below by $\lambda(A)\gg_\varepsilon \mathcal{N}^{-3-\varepsilon}$. Inserting this information into (\ref{circleidea}) we find the desired estimate
$$|\mathcal{C}_m( \ell)| \geq \int_{A} f(\underline{ \alpha})^{  \ell} \ d \underline{ \alpha}\geq \lambda(A) (\mathcal{N}/2)^{\ell}  \gg_{\varepsilon}  \mathcal{N}^{\ell-3-\varepsilon}.$$

\noindent {\bf Remark.} When $d \geq 5$ one can repeat the preceding argument to show that the set of non-degenerate tuples $\mathcal{X}^{(d)}_m(4)$ is much {\em larger} than $\mathcal{D}^{(d)}_m(4)$, as opposed to what happens in dimensions $2$ and $3$. Indeed, for $\mathcal{N}=\mathcal{N}_m^{(d)}$ we set
$$f(\underline{ \alpha}):= \sum_{\mu \in \mathcal{E}^{(d)}_m} e(\mu \cdot \underline{ \alpha}), \qquad A^{(d)}:=\left\{ \alpha \in [0,1]^d \ \big| \ | f(\underline{ \alpha})| \geq \mathcal{N}/2 \right\}$$
and recall the estimates (\cite[Theorem 20.2]{iwakow})
$$m^{d/2-1} \ll_d \mathcal{N}^{(d)}_m \ll_d m^{d/2-1}.$$
Proceeding as in \eqref{originmass} one finds that 
$$|f(\underline{ \alpha})-\mathcal{N}| \ll m^{1/2} \mathcal{N} \cdot ||\underline{\alpha}||\ll \mathcal{N}^{(d-1)/(d-2)} \cdot ||\underline{\alpha}||.$$
As a result $\lambda_d(A^{(d)})\gg \mathcal{N}^{-d/(d-2)}$ (where $\lambda_d$ denotes the $d$-dimensional Lebesgue measure) and hence
$$|\mathcal{C}_m^{(d)}(4)| \geq \int_{A^{(d)}} f(\underline{ \alpha})^{ 4} \ d \underline{ \alpha} \gg \mathcal{N}^{4-d/(d-2)}\gg  \mathcal{N}^{7/3}.$$

\section{Kac-Rice formulas}
\label{seckr}
\noindent
For a smooth random field, the moments of the geometric measure of the nodal set are given by the \textbf{Kac-Rice formulas} (see %\cite{cralea}, \S 10, and 
\cite{azawsc}, Theorems 6.8 and 6.9). The arithmetic random wave $F$ \eqref{arw} is a Gaussian field; for each $x\in\mathbb{T}^3$, let $\phi_{F(x)}$ be the probability density function of the (standard Gaussian) random variable $F(x)$, and $\phi_{F(x),F(y)}$ the joint density of the random vector $(F(x),F(y))$. We define the \textbf{zero density} function (also called {\em first intensity}) $K_1: \mathbb{T}^3\to\mathbb{R}$ and \textbf{2-point correlation} function (also called {\em second intensity}) $\tilde{K_2}: \mathbb{T}^3\times\mathbb{T}^3\to\mathbb{R}$ of $F$ as the conditional Gaussian expectations
\begin{equation*}
K_1
%=K_1(y)
=\phi_{F(y)}(0)\cdot\mathbb{E}[\|\nabla F(y)\|\ \big| \ F(y)=0]
\end{equation*}
and
\begin{equation}
\label{K2tdef}
\tilde{K_2}(x)
%=\tilde{K_2}(x,y)
=\phi_{F(y),F(x+y)}(0,0)\cdot\mathbb{E}[\|\nabla F(y)\|\cdot\|\nabla F(x+y)\|\ \big| \ F(y)=F(x+y)=0],
\end{equation}
the latter defined for $x\neq 0$. The functions $K_1$ and $\tilde{K_2}$ do not depend on $y$, since $F$ is stationary. The Kac-Rice formulas for the first and second moments of the nodal area are
\begin{equation}
\label{kacrice1}
\mathbb{E}(\mathcal{A})=
%\int_{\mathbb{T}^3}K_1(x)dx,
\int_{\mathbb{T}^3}K_1dx=K_1
\end{equation}
and
\begin{equation}
\label{kacrice2}
\mathbb{E}(\mathcal{A}^2)=
%\int_{\mathbb{T}^3}\int_{\mathbb{T}^3}\tilde{K_2}(x,y)dxdy.
\int_{\mathbb{T}^3}\tilde{K_2}(x)dx.
\end{equation}
As mentioned in the introduction, the expected nodal area was computed by Rudnick and Wigman to be (\eqref{rw2008} with $d=3$)
%$\mathcal{I}_3=\frac{4}{\sqrt{3}}$.
\begin{equation}
\label{expect}
\mathbb{E}[\mathcal{A}]=\frac{4}{\sqrt{3}}\sqrt{m}.
%\qquad\qquad\qquad
%(\mathbb{E}[Vol])^2=\frac{16}{3}m
\end{equation}

It is more convenient to work with a scaled version of the second intensity,
\begin{equation}
\label{K2def}
K_2(x):=\frac{3}{E}\tilde{K_2}(x),
\end{equation}
where we recall that
%\begin{equation*}
$E=E_m=4\pi^2m$.
%\end{equation*}
%With the notation we have introduced, we may write 
Applying the Kac-Rice formulas, we obtain the following precise expression for the variance of the nodal area.
\begin{prop}
\label{precisevar}
\begin{equation}
\label{precisevarexpression}
Var(\mathcal{A})=\frac{E}{3}\int_{\mathbb{T}^3}\left(K_2(x)-\frac{4}{\pi^2}\right)dx.
\end{equation}
\end{prop}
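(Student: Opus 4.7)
The plan is to derive \eqref{precisevarexpression} by direct algebraic manipulation of the identity $Var(\mathcal{A}) = \mathbb{E}[\mathcal{A}^2] - (\mathbb{E}[\mathcal{A}])^2$, combining the two Kac-Rice formulas \eqref{kacrice1}--\eqref{kacrice2} with the explicit mean value \eqref{expect}. Essentially nothing deep is needed: the statement is a bookkeeping exercise that absorbs the constant $(\mathbb{E}[\mathcal{A}])^2$ into the integrand by noting that $\mathrm{Vol}(\mathbb{T}^3)=1$.

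The first step is to apply the second Kac-Rice formula together with the scaling relation \eqref{K2def}, obtaining
\[
\mathbb{E}[\mathcal{A}^2] \;=\; \int_{\mathbb{T}^3}\tilde{K}_2(x)\,dx \;=\; \frac{E}{3}\int_{\mathbb{T}^3}K_2(x)\,dx.
\]
Before invoking \eqref{kacrice2} one should check the hypotheses of \cite[Theorems 6.8, 6.9]{azawsc}: the field $F$ is smooth and stationary, and a short calculation from \eqref{r} shows that the Gaussian vectors $F(y)$ and $(F(y),F(x+y))$ are non-degenerate for $x$ outside a proper subvariety of $\mathbb{T}^3$, which is enough for the integrated formulas.

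The second step is to rewrite the squared mean \eqref{expect} as a constant-function integral. Using $E = 4\pi^2 m$ and $\mathrm{Vol}(\mathbb{T}^3)=1$,
\[
\bigl(\mathbb{E}[\mathcal{A}]\bigr)^2 \;=\; \frac{16m}{3} \;=\; \frac{4\pi^2 m}{3}\cdot\frac{4}{\pi^2} \;=\; \frac{E}{3}\int_{\mathbb{T}^3}\frac{4}{\pi^2}\,dx.
\]
Subtracting this from the preceding display yields exactly \eqref{precisevarexpression}.

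The only mildly delicate point — and the place to be careful — is the integrability of $\tilde{K}_2$ on $\mathbb{T}^3$: the second intensity can blow up near the diagonal $x=0$, and the bulk of the remaining sections (in particular the analysis of the singular set in Section \ref{secnsing}) will be devoted to controlling this behaviour. For the purpose of Proposition \ref{precisevar}, however, it suffices that $\tilde{K}_2 \in L^1(\mathbb{T}^3)$, which follows from the standard Kac-Rice machinery once pointwise non-degeneracy of the joint distribution of $(F(y),F(x+y))$ away from $x=0$ has been verified.
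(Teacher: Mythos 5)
Your argument is correct and is essentially identical to the paper's proof: both simply combine the Kac-Rice formula \eqref{kacrice2}, the mean value \eqref{expect}, the scaling \eqref{K2def}, and $\mathrm{Vol}(\mathbb{T}^3)=1$ to absorb the constant $(\mathbb{E}[\mathcal{A}])^2=\tfrac{16}{3}m$ into the integrand. The extra remarks on non-degeneracy and $L^1$-integrability of $\tilde{K}_2$ are sensible but are not part of the paper's (purely algebraic) proof of this proposition; the paper addresses those concerns elsewhere.
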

\begin{proof}
By \eqref{kacrice2} and \eqref{expect},
\begin{multline*}
\mathbb{E}[\mathcal{A}^2]-(\mathbb{E}[\mathcal{A}])^2
=
%\int_{\mathbb{T}^3}\tilde{K_2}(x)dx-(\mathbb{E}[Vol])^2
%=\\
%\int_{\mathbb{T}^3}(\tilde{K_2}(x)-(\mathbb{E}[Vol])^2)dx
%=
%\int_{\mathbb{T}^3}\left(\frac{E}{3}K_2(x)-(\mathbb{E}[Vol])^2\right)dx
%=\\
%\frac{E}{3}\int_{\mathbb{T}^3}\left(K_2(x)-\frac{3}{E}(\mathbb{E}[Vol])^2\right)dx
%=
%\frac{E}{3}\int_{\mathbb{T}^3}\left(K_2(x)-\frac{3}{4\pi^2 m}\frac{16}{3}m\right)dx
\int_{\mathbb{T}^3}\tilde{K_2}(x)dx-\frac{16}{3}m
=
\int_{\mathbb{T}^3}\left(\tilde{K_2}(x)-\frac{16}{3}m\right)dx
\\=
\frac{E}{3}\int_{\mathbb{T}^3}\left(K_2(x)-\frac{3}{E}\frac{16}{3}m\right)dx
=
\frac{E}{3}\int_{\mathbb{T}^3}\left(K_2(x)-\frac{4}{\pi^2}\right)dx.
\end{multline*}
\end{proof}

%\section{Two-point correlation function}
%\label{seck2}
By the above arguments, to understand the nodal area variance of the arithmetic random wave $F$, we need to study the (scaled) two-point function $K_2$; let us begin by introducing the necessary notation. Recall the covariance function $r$ of $F$ is given by \eqref{r}. Let
%Let $D(x)$ be the gradient of $r$ (a $1\times 3$ row vector); since
\begin{equation}
\label{D}
D(x):=\nabla r(x)=\frac{2\pi i}{\mathcal{N}}
\sum_{\mu\in\mathcal{E}} e(\mu\cdot x)\cdot\mu,
\end{equation}
where for $j=1,2,3$ we have computed the partial derivatives
\begin{equation*}
D_j(x)=\frac{\partial r}{\partial x_j}(x)=\frac{2\pi i}{\mathcal{N}}\sum_{\mu\in\mathcal{E}} e(\mu\cdot x)\mu^{(j)},
\qquad\qquad
\mu=(\mu^{(1)},\mu^{(2)},\mu^{(3)}).
\end{equation*}
Further, denote
\begin{equation}
\label{H}
H(x):=
-\frac{4\pi^2}{\mathcal{N}}
\sum_{\mu\in\mathcal{E}} e(\mu\cdot x)\cdot\mu^t\mu
\end{equation}
the Hessian $3\times 3$ matrix of $r$, where for $j,k=1,2,3$,
\begin{equation*}
H_{jk}(x)
:=
\frac{\partial^2 r}{\partial x_j\partial x_k}(x)=-\frac{4\pi^2}{\mathcal{N}}\sum_{\mu\in\mathcal{E}} e(\mu\cdot x)\mu^{(j)}\mu^{(k)}
.
\end{equation*}
The $n\times n$ identity matrix will be  denoted $I_n$.

\begin{prop}
\label{explicit}
The scaled two-point correlation function may be expressed as
\begin{equation}
\label{expl}
K_2(x)=
\frac{1}{2\pi\sqrt{1-r^2(x)}}
\cdot
\mathbb{E}[\|w_1\|\cdot\|w_2\|],
\end{equation}
where $w_1,w_2$ are three-dimensional random vectors with Gaussian distribution $(w_1,w_2)\sim\mathcal{N}(0,\Omega(x))$; their covariance matrix is given by
\begin{equation}
\label{omega}
\Omega=I_6+\begin{pmatrix}
X & Y \\ Y & X
\end{pmatrix},
\end{equation}
the $3\times 3$ matrices $X$ and $Y$ being defined as
\begin{equation}
\label{X}
X(x)=-\frac{1}{1-r^2}\frac{3}{E}\cdot D^tD
\end{equation}
and
\begin{equation}
\label{Y}
Y(x)=-\frac{3}{E}\cdot
\left(
H
+\frac{r}{1-r^2}\cdot
D^tD
\right).
\end{equation}
\end{prop}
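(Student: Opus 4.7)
The plan is to unpack the definition \eqref{K2tdef} of $\tilde{K_2}$ using standard Gaussian regression, then to apply the scaling $K_2=\tfrac{3}{E}\tilde{K_2}$. Throughout I write $Z_1=F(y)$, $Z_2=F(x+y)$, $w_1=\nabla F(y)$, $w_2=\nabla F(x+y)$ and use that $F$ is a centred unit-variance stationary Gaussian field with $r(-x)=r(x)$.

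First, the joint density factor. Since $(Z_1,Z_2)\sim\mathcal{N}\!\left(0,\bigl(\begin{smallmatrix}1&r(x)\\r(x)&1\end{smallmatrix}\bigr)\right)$, one has
\[
\phi_{F(y),F(x+y)}(0,0)=\frac{1}{2\pi\sqrt{1-r(x)^2}}.
\]
Next, the covariances of the $8$-vector $(Z_1,Z_2,w_1,w_2)$ are obtained by differentiating $r$: for two linear functionals of $F$ supported at points $y_1,y_2$, covariances are computed by applying the same derivatives to the kernel $r(y_2-y_1)$. This yields
$\operatorname{Cov}(w_i,w_i)=-H(0)=\tfrac{E}{3}I_3$ (using the symmetry $\sum_{\mu\in\mathcal{E}}\mu^t\mu=\tfrac{m\mathcal{N}}{3}I_3$ discussed in Section~\ref{spheres}), $\operatorname{Cov}(w_1,w_2)=-H(x)$, $\operatorname{Cov}(w_1,Z_1)=\operatorname{Cov}(w_2,Z_2)=D(0)=0$, and, using $D(-x)=-D(x)$,
\[
\operatorname{Cov}(w_1,Z_2)=-D(x),\qquad \operatorname{Cov}(w_2,Z_1)=D(x).
\]

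Third, I apply the Gaussian regression formula: the conditional law of $(w_1,w_2)$ given $Z_1=Z_2=0$ is centred Gaussian with covariance $\Sigma_{ww}-\Sigma_{wZ}\Sigma_{ZZ}^{-1}\Sigma_{Zw}$. A direct block computation using $\Sigma_{ZZ}^{-1}=\tfrac{1}{1-r^2}\bigl(\begin{smallmatrix}1&-r\\-r&1\end{smallmatrix}\bigr)$ and
\[
\Sigma_{wZ}=\begin{pmatrix}0_3&-D\\ D&0_3\end{pmatrix}
\]
gives
\[
\Sigma_{wZ}\Sigma_{ZZ}^{-1}\Sigma_{Zw}=\frac{1}{1-r^2}\begin{pmatrix}D^tD & rD^tD\\ rD^tD & D^tD\end{pmatrix},
\]
so the conditional covariance equals
\[
\begin{pmatrix}\tfrac{E}{3}I_3-\tfrac{D^tD}{1-r^2} & -H-\tfrac{rD^tD}{1-r^2}\\[2pt] -H-\tfrac{rD^tD}{1-r^2} & \tfrac{E}{3}I_3-\tfrac{D^tD}{1-r^2}\end{pmatrix}.
\]

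Finally, rescale: substituting the scaled vectors $\tilde{w}_i:=\sqrt{3/E}\,w_i$ produces a factor $E/3$ outside $\mathbb{E}[\|\tilde{w}_1\|\|\tilde{w}_2\|]$ which cancels with the prefactor $3/E$ in $K_2=\tfrac{3}{E}\tilde{K_2}$; the conditional covariance above gets multiplied by $3/E$, which turns the diagonal blocks into $I_3+X$ and the off-diagonal blocks into $Y$, exactly matching \eqref{X} and \eqref{Y}. Combining these pieces gives \eqref{expl} and \eqref{omega}. The only mildly delicate point is tracking the signs in the block $\Sigma_{wZ}$ (stemming from $D$ being odd) to ensure the off-diagonal blocks of the conditional covariance really have the $+rD^tD$ sign needed to produce $Y$; the remaining algebra is purely mechanical.
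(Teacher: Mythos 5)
Your proof is correct and follows essentially the same route as the paper: compute the joint density factor, assemble the covariance matrix of the eight-dimensional Gaussian vector, apply the Gaussian regression formula for the conditional covariance, then rescale by $\sqrt{3/E}$. The only difference is that you re-derive the blocks of $\Sigma$ directly from stationarity and differentiation of the kernel, whereas the paper cites \cite[Lemma 5.1]{rudwi2} for that covariance matrix (and the paper also explicitly records, via \cite[Section 2.3]{orruwi}, that $A(x)$ is nonsingular for almost all $x$, which justifies the regression step).
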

\begin{proof}
As $F$ is stationary, \eqref{K2tdef} may be rewritten as
\begin{equation}
\label{K2tilde0}
\tilde{K_2}(x)=\phi_{F(0),F(x)}(0,0)\cdot\mathbb{E}[\|\nabla F(0)\|\cdot\|\nabla F(x)\|\ \big|\  F(0)=F(x)=0].
\end{equation}
Since the covariance matrix of $(F(0),F(x))$ is
\begin{equation}
\label{A}
A(x)=
\begin{pmatrix}
1 & r(x) \\r(x) & 1
\end{pmatrix},
\qquad
\end{equation}
the joint Gaussian density equals
\begin{equation}
\label{K2tilde1}
\phi_{F(0),F(x)}(0,0)
%=\frac{1}{\sqrt{(2\pi)^2\det(A)}}
=\frac{1}{2\pi\sqrt{1-r^2(x)}}.
\end{equation}
%\begin{equation*}
%\tilde{K_2}(x)
%=
%\frac{1}{2\pi\sqrt{1-r^2(x)}}
%\cdot
%\mathbb{E}[|\nabla F(0)|\cdot|\nabla F(x)|\big| F(0)=F(x)=0].
%\end{equation*}
By \cite[Lemma 5.1]{rudwi2}, the covariance matrix of the eight-dimensional Gaussian vector
\begin{equation*}
\left(
F(0),F(x),
\nabla F(0),
\nabla F(x)
\right)
\end{equation*}
is the block matrix
$
\Sigma(x)=\begin{pmatrix}
A & B \\ B^t & C
\end{pmatrix},
$ 
with $A$ as in \eqref{A},
\begin{equation*}
B(x)=
\begin{pmatrix}
0_{1\times 3} & D_{1\times 3}(x)\\-D_{1\times 3}(x) & 0_{1\times 3}
\end{pmatrix}
\qquad\text{and}\qquad
C=
\begin{pmatrix}
\frac{E}{3}I_3 & -H(x)
\\
-H(x) & \frac{E}{3}I_3
\end{pmatrix}.
\end{equation*}
By \cite[Section 2.3]{orruwi} (also see \cite[Proposition 2.4 (1)]{rudwi2}), there are only finitely many $x\in{\T}^3$ such that $r(x)=\pm 1$. Therefore, for almost all $x\in{\T}^3$, the covariance matrix $A(x)$ is nonsingular. In view of \cite[Proposition 1.2]{azawsc} (see also the hypotheses of \cite[Theorem 6.9]{azawsc}), the covariance matrix of
$
\left(
\nabla F(0),
\nabla F(x)
\right)
$
conditioned on $F(0)=0,F(x)=0$ is $\tilde{\Omega}(x):=C-B^tA^{-1}B$. We then have
\begin{equation}
\label{K2tilde2}
\mathbb{E}[\|\nabla F(0)\|\cdot\|\nabla F(x)\| \ \big| \ F(0)=F(x)=0]
=
\mathbb{E}[\|v_1\|\cdot\|v_2\|],
\end{equation}
where $v_1, v_2$ are three-dimensional random vectors with $(v_1,v_2)\sim N(0,\tilde{\Omega})$.
%=(v_{11},v_{12},v_{13})
%\in\mathbb{R}^3$
%=(v_{21},v_{22},v_{23})
Inserting \eqref{K2tilde1} and \eqref{K2tilde2} into \eqref{K2tilde0} we obtain
\begin{equation}
\tilde{K_2}(x)=\frac{1}{2\pi\sqrt{1-r^2(x)}}\cdot\mathbb{E}[\|v_1\|\cdot\|v_2\|],
\qquad
(v_1,v_2)\sim N(0,\tilde{\Omega}).
\end{equation}
Lastly, to prove the expression \eqref{expl} for the scaled two-point function, we rescale the random vectors
\begin{equation*}
v_i=:\sqrt{\frac{E}{3}}w_i \qquad i=1,2, 
\end{equation*}
and the matrix
\begin{equation*}
\tilde{\Omega}=:\frac{E}{3}\Omega;
\end{equation*}
then $\Omega$ is given by \eqref{omega}, with $X,Y$ as in \eqref{X} and \eqref{Y}.
%\begin{equation*}
%\tilde{K_2}=\frac{E}{3}K_2
%\end{equation*}
\end{proof}
In the proof of the latter proposition, we saw that the distribution of $(w_1,w_2)$ is non-degenerate (i.e., the matrix $\Omega(x)$ is nonsingular) for almost all $x$. Also note that \eqref{omega} expresses $\Omega(x)$ as a perturbation of the identity matrix, in the sense that the entries of $X(x),Y(x)$ are small for `typical' $x\in\mathbb{T}^3$. % The following section will make this idea precise.

\section{Proof of Proposition \ref{theprop}}
\label{secnsing}
\subsection{The contribution of the singular set}
We will define a small subset of the torus, called the {\em singular set} $S$: outside of $S$, we will eventually establish precise asymptotics for the two-point correlation function $K_2$ (recall \eqref{K2def} and \eqref{K2tdef}). The goal of the present subsection is to bound $K_2$ on $S$, and also to control the measure of $S$. The definitions and results of the present section are borrowed from \cite{orruwi}, \cite{rudwi2} and \cite{krkuwi}. Recall the notation $\mathcal{E}$ for the set of all lattice points on the sphere of radius $\sqrt{m}$. 
\begin{defin}
%[\cite{orruwi}, Definition 6.2]
\label{singpoint}
We call the point $x\in\mathbb{T}^3$ \textbf{positive singular} (resp. \textbf{negative singular}) if there exists a subset $\mathcal{E}_x\subseteq\mathcal{E}$ with density $\frac{|\mathcal{E}_x|}{|\mathcal{E}|}>\frac{11}{12}$ such that $\cos(2\pi(\mu\cdot x))>\frac{3}{4}$ (resp. $\cos(2\pi(\mu\cdot x))<-\frac{3}{4}$) for all $\mu\in\mathcal{E}_x$.
%We call the point $x\in\mathbb{T}^3$ \textbf{negative singular} if there exists a subset $\mathcal{E}_x\subseteq\mathcal{E}$ with density $\frac{|\mathcal{E}_x|}{|\mathcal{E}|}>\frac{11}{12}$ such that $\cos(2\pi(\mu\cdot x))<-\frac{3}{4}$ for all $\mu\in\mathcal{E}_x$.
\end{defin}
%\noindent
For instance, the origin $(0,0,0)$ is a positive singular point. Take $q\asymp \sqrt{m}$ and partition the torus into $q^3$ cubes, each centred at $a/q$, $a\in\mathbb{Z}^3$, of side length $1/q$. Note that the cubes have disjoint interiors.%$\left(\frac{1}{2},\frac{1}{2},\frac{1}{2}\right)$.
\begin{defin}
\label{singcube}
We call the cube $Q\subset\mathbb{T}^3$ {\em positive singular} (resp. {\em negative singular}) if it contains a positive (resp. negative) singular point.
%We call the cube $Q\subset\mathbb{T}^3$ {\em negative singular} if it contains a negative singular point.
\end{defin}
\begin{defin}
\label{S}
The {\bf singular set} $S$ is the union of all positive and negative singular cubes.
\end{defin}
The main result of the present subsection is the bound for the integral of $K_2$ on $S$, for which we shall need two lemmas. The covariance function $r$ of the arithmetic random wave $F$ satisfies $|r(x)|\leq 1$. The following lemma shows that, on $S$, $r$ is bounded away from $0$.
\begin{lemma}
[{\cite[Lemmas 6.4 and 6.5]{orruwi}}]
\label{rlower}
$\ $
\begin{enumerate}
\item
For all positive (resp. negative) singular cubes $Q$, there exists a subset $\mathcal{E}_{Q}\subseteq\mathcal{E}$ with density $\frac{|\mathcal{E}_x|}{|\mathcal{E}|}>\frac{11}{12}$ such that for all $y\in Q$ and for all $\mu\in\mathcal{E}_{Q}$, we have 
\begin{equation*}
\cos(2\pi(\mu\cdot y))>\frac{1}{2}
\end{equation*}
(resp. $\cos(2\pi(\mu\cdot y))<-1/2$).
%For all negative singular cubes $Q$, there exists a subset $\mathcal{E}_{Q}\subseteq\mathcal{E}$ with density $\frac{|\mathcal{E}_x|}{|\mathcal{E}|}>\frac{11}{12}$ such that for all $y\in Q$ and for all $\mu\in\mathcal{E}_{Q}$, we have
%\begin{equation*}
%\cos(2\pi(\mu\cdot y))<-\frac{1}{2}.
%\end{equation*}
\item
For all $y\in S$:
\begin{equation*}
|r(y)|>\frac{3}{8}.
\end{equation*}
\end{enumerate}
\end{lemma}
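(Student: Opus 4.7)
The proof splits naturally along the two assertions of the lemma, with part (1) being a quantitative perturbation argument and part (2) a direct averaging consequence of part (1). The strategy is that a singular cube is defined via a singular point $x$ at which many of the cosines $\cos(2\pi \mu \cdot x)$ are bounded away from $0$; since cubes have diameter $\sqrt{3}/q$ with $q \asymp \sqrt{m}$, the inner product $\mu \cdot (y-x)$ varies by at most $O(\sqrt{m}/q)=O(1)$ across the cube, and if we fix a sufficiently large implied constant in $q \asymp \sqrt{m}$, this variation becomes negligible on the scale of cosine. Then summing the cosines recovers $r(y)$.

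For part (1), let $Q$ be a positive singular cube containing a positive singular point $x$, and let $\mathcal{E}_x \subseteq \mathcal{E}$ be the associated high-density subset from Definition \ref{singpoint}. For any $y \in Q$ and $\mu \in \mathcal{E}$, one has
\begin{equation*}
|\mu \cdot (y-x)| \leq \|\mu\| \cdot \|y-x\| \leq \sqrt{m} \cdot \frac{\sqrt{3}}{q} = \frac{\sqrt{3m}}{q},
\end{equation*}
which, on choosing the constant in $q \asymp \sqrt{m}$ large enough, can be made smaller than any prescribed $\varepsilon > 0$. By the Lipschitz bound $|\cos(2\pi \mu \cdot y) - \cos(2\pi \mu \cdot x)| \leq 2\pi |\mu \cdot (y-x)|$, one can ensure this difference is below $1/4$. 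Setting $\mathcal{E}_Q := \mathcal{E}_x$ then guarantees $\cos(2\pi \mu \cdot y) > 3/4 - 1/4 = 1/2$ for every $\mu \in \mathcal{E}_Q$ and every $y \in Q$. The negative singular case is identical up to a sign.

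For part (2), let $y \in S$ and let $Q$ be a singular cube containing $y$. Using the symmetry $\mathcal{E} = -\mathcal{E}$, the covariance function \eqref{r} is real and writes as
\begin{equation*}
r(y) = \frac{1}{\mathcal{N}} \sum_{\mu \in \mathcal{E}} \cos(2\pi \mu \cdot y).
\end{equation*}
If $Q$ is positive singular, split the sum along $\mathcal{E}_Q$ and its complement; by part (1) and the trivial bound $\cos \geq -1$ on the complement,
\begin{equation*}
r(y) > \frac{|\mathcal{E}_Q|}{\mathcal{N}} \cdot \frac{1}{2} - \frac{|\mathcal{E}\setminus \mathcal{E}_Q|}{\mathcal{N}} > \frac{11}{12}\cdot\frac{1}{2} - \frac{1}{12} = \frac{3}{8}.
\end{equation*}
If $Q$ is negative singular, the analogous split gives $r(y) < -3/8$, and in either case $|r(y)| > 3/8$.

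There is no real obstacle here; the only delicate point is to pin down at the outset that the implied constant in $q \asymp \sqrt{m}$ is taken large enough for the perturbation of $\cos(2\pi \mu \cdot x)$ to $\cos(2\pi \mu \cdot y)$ to stay below the gap $3/4 - 1/2 = 1/4$. Once this is arranged, both parts reduce to elementary estimates on sums of cosines.
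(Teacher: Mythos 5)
Your proof is correct and follows precisely the standard argument used in \cite[Lemmas 6.4 and 6.5]{orruwi}, which the present paper simply cites rather than reproves. The only point worth making explicit is the one you already flag: the implied constant in $q\asymp\sqrt m$ must be taken at least $8\pi\sqrt{3}$ so that the Lipschitz perturbation $2\pi\sqrt{3m}/q$ of each cosine across the cube is strictly below the gap $3/4-1/2=1/4$, after which part (1) is immediate with $\mathcal{E}_Q:=\mathcal{E}_x$, and part (2) follows since for $\alpha:=|\mathcal{E}_Q|/\mathcal{N}>11/12$ one has $r(y)>\tfrac{3}{2}\alpha-1>\tfrac38$.
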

\noindent
Recall the definitions \eqref{X} and \eqref{Y} for the matrices $X(x)$ and $Y(x)$.
\begin{lemma}[cf. {\cite[Lemma 3.2]{krkuwi}}]
\label{O(1)}
We have uniformly (entry-wise)
\begin{equation}
\label{XYO1}
X(x)=O(1), \quad Y(x)=O(1).
\end{equation}
One immediate consequence is
\begin{equation}
\label{K2bdd}
K_2(x)\ll \frac{1}{\sqrt{1-r^2(x)}}.
\end{equation}
\end{lemma}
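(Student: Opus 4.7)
The plan is to deduce both bounds from a single Cauchy--Schwarz identity linking the gradient covariance $D_j(x)$ to the gap $1-r^2(x)$. First I would dispense with the Hessian contribution to $Y(x)$: since $|\mu^{(j)}\mu^{(k)}|\le \|\mu\|^2 = m$, the triangle inequality gives $|H_{jk}(x)|\le 4\pi^2 m = E$, so $\frac{3}{E}H(x) = O(1)$ entry-wise.

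The main input is the bound $D_j(x)^2 \le (1-r^2(x))\cdot E/3$. To see this, I would note that $\mathbb{E}[F(y)^2] = r(0) = 1$ is constant in $y$, so differentiating gives $\mathbb{E}[F(x)\partial_j F(x)] = 0$ (equivalently, the symmetry $\mu\leftrightarrow -\mu$ of $\mathcal{E}$ makes $\partial_j r(0)=0$). The auxiliary centred Gaussian $U := F(0) - r(x) F(x)$ therefore satisfies $\mathbb{E}[U^2] = 1-r^2(x)$ and $\mathbb{E}[U\,\partial_j F(x)] = D_j(x)$. Since the coordinate-permutation symmetry of $\mathcal{E}$ forces $\mathbb{E}[(\partial_j F(x))^2] = E/3$, Cauchy--Schwarz applied to the pair $(U,\partial_j F(x))$ yields the advertised estimate. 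A second Cauchy--Schwarz step $|D_j D_k|\le \sqrt{D_j^2\, D_k^2}$ promotes it to every entry of $D^t D$, and plugging the result into the definitions \eqref{X} and \eqref{Y} (using $|r|\le 1$ to absorb the $r$-prefactor in $Y$) gives $X(x), Y(x) = O(1)$.

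To conclude \eqref{K2bdd}, I would read off from \eqref{omega} that the diagonal blocks of $\Omega(x)$ are $I_3 + X(x)$, so each summand $w_i$ in \eqref{expl} satisfies $\mathbb{E}\|w_i\|^2 = 3 + \operatorname{tr}(X(x)) = O(1)$. One more Cauchy--Schwarz application gives $\mathbb{E}[\|w_1\|\,\|w_2\|] \le \sqrt{\mathbb{E}\|w_1\|^2 \cdot \mathbb{E}\|w_2\|^2} = O(1)$, and substituting into \eqref{expl} produces $K_2(x)\ll 1/\sqrt{1-r^2(x)}$.

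I do not anticipate any serious obstacle; the only non-obvious move is identifying the correct Cauchy--Schwarz pairing in the second paragraph (which is essentially the covariance of $\partial_j F(x)$ conditioned on $F(0),F(x)$), after which every subsequent bound is mechanical.
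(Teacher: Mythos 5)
Your proof is correct. The paper itself gives no argument for this lemma — it simply cites the two-dimensional analogue \cite[Lemma 3.2]{krkuwi} — so your write-up supplies a self-contained proof consistent with that reference. The key identity you isolate, $D_j(x)^2 \le (1-r^2(x))\cdot E/3$, is exactly the Cauchy--Schwarz bound coming from the correlation of $\partial_j F(x)$ with the part of $F(0)$ orthogonal to $F(x)$ (equivalently, a bound on the conditional variance of $\partial_j F(x)$ given $F(0)=F(x)=0$), and all the remaining steps — $|H_{jk}|\le E$ from $|\mu^{(j)}\mu^{(k)}|\le m$, the promotion to all entries of $D^tD$, the absorption of the $r$-prefactor in $Y$, and the final Cauchy--Schwarz on $\mathbb{E}[\|w_1\|\|w_2\|]$ using $\mathbb{E}\|w_i\|^2 = 3+\operatorname{tr}X$ — follow mechanically. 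One small point worth making explicit: the evaluation $\mathbb{E}[(\partial_j F)^2]=-H_{jj}(0)=E/3$ uses only coordinate-permutation invariance of $\mathcal{E}$ to give $\sum_\mu(\mu^{(j)})^2=m\mathcal{N}/3$, which is exactly the diagonal of the block $C$ appearing in Proposition \ref{explicit}, so you are consistent with the paper's setup.
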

\noindent
Recall the notation $\mathcal{R}(\ell)$ \eqref{rk} for the $\ell$-th moment of the covariance function $r$.
\begin{prop}[cf. {\cite[Section 6.3]{orruwi}} and {\cite[Lemma 4.4]{krkuwi}}]
\label{singset} 
$\ $
\begin{enumerate}
\item
The contribution of the singular set to \eqref{precisevarexpression} has the following bound:
\begin{equation*}
\int_{S}|K_2(x)|dx\ll meas(S).
\end{equation*}
\item
For all integers $\ell\geq 0$:
\begin{equation*}
meas(S)\ll \mathcal{R}(\ell).
\end{equation*}
\end{enumerate}
%Follows that for all integers $k\geq 0$:
%\begin{equation*}
%\int_{S}K_2(x)dx\ll \mathcal{R}_k(m).
%\end{equation*}
\end{prop}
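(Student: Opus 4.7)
The plan is to handle the two parts separately. Part (2) is a one-line consequence of the pointwise lower bound on $|r|$ provided by Lemma \ref{rlower}(2), while Part (1) requires combining the consequence $K_2(x)\ll 1/\sqrt{1-r(x)^2}$ of Lemma \ref{O(1)} with a careful local analysis of $K_2$ near the finite set
\[
\mathcal{Z}:=\{x\in\T^3:r(x)=\pm 1\},
\]
whose finiteness is recalled in the proof of Proposition \ref{explicit}.

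For Part (2), observe that $|r(y)|>3/8$ for every $y\in S$, hence
\[
\mathcal{R}(\ell)\;\geq\;\int_S |r(y)|^{\ell}\,dy\;\geq\;(3/8)^{\ell}\cdot meas(S),
\]
which yields $meas(S)\leq(8/3)^{\ell}\mathcal{R}(\ell)\ll_\ell \mathcal{R}(\ell)$.

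For Part (1) the strategy is to establish the uniform bound $\int_Q |K_2|\,dx\ll meas(Q)$ for each singular cube $Q\subset S$ and then sum. For cubes lying at a fixed positive distance from $\mathcal{Z}$, the function $|r|$ is bounded away from $1$ and hence $K_2=O(1)$ on $Q$, so the estimate is immediate. For a cube $Q$ meeting a small neighborhood of some $x_0\in\mathcal{Z}$, a local Taylor expansion of $r$ at $x_0$ uses the identities $D(x_0)=0$ (by the symmetry of $\mathcal{E}$) and $H(x_0)=-r(x_0)\cdot(E/3)I_3$, the latter computed exactly as $H(0)=-(E/3)I_3$ since $e(\mu\cdot x_0)=r(x_0)\in\{\pm 1\}$ for every $\mu\in\mathcal{E}$. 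This yields the expansion $1-r(x)^2=(E/3)\|x-x_0\|^2+O(E\|x-x_0\|^3)$ and therefore the local bound
\[
K_2(x)\ll\sqrt{3/E}\cdot\|x-x_0\|^{-1}.
\]

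The crux is a scale-matching observation: the cube side length $1/q\asymp E^{-1/2}$ is precisely calibrated to the intrinsic scale of $r$ near $\mathcal{Z}$, because $\int_Q\|x-x_0\|^{-1}\,dx\ll 1/E$ whenever $Q$ lies within $O(1/q)$ of $x_0$, so multiplication by the $\sqrt{3/E}$ prefactor produces a contribution of order $E^{-3/2}\asymp meas(Q)$. Summing over singular cubes completes the proof. The principal obstacle is the bookkeeping near $\mathcal{Z}$: one must confirm that only $O(1)$ singular cubes lie within an $O(1/q)$-neighborhood of each $x_0\in\mathcal{Z}$ and track absolute constants through the local estimates so that the implicit constant in $\int_S|K_2|\ll meas(S)$ does not grow with $m$.
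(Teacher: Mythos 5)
Your Part (2) is correct and is exactly the argument used in the cited references: combine Lemma~\ref{rlower}(2) with the definition \eqref{rk} of $\mathcal{R}(\ell)$ to get $\mathcal{R}(\ell)\ge\int_S|r|^\ell\ge(3/8)^\ell\,\mathrm{meas}(S)$.

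Your Part (1), however, contains a genuine gap. You organize the argument around the finite set $\mathcal{Z}=\{r=\pm1\}$ and split the singular cubes into those near $\mathcal{Z}$ and those ``at a fixed positive distance'' from $\mathcal{Z}$, asserting that $|r|$ is bounded away from $1$ on the latter. Nothing in the paper supports that assertion, and it is the crux of the matter: the singular set $S$ is \emph{not} a small neighbourhood of $\mathcal{Z}$. A cube is singular as soon as it contains a point where $\cos(2\pi\mu\cdot x)>3/4$ for $11/12$ of the directions $\mu\in\mathcal{E}$, and such points exist with $r$ arbitrarily close to but unequal to $\pm1$, far from $\mathcal{Z}$. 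The finiteness of $\mathcal{Z}$ gives no quantitative control of how close $|r|$ can get to $1$ off $\mathcal{Z}$, and any such bound you could extract would a priori degenerate with $m$, which is fatal for a $\ll\mathrm{meas}(S)$ estimate with an absolute constant. Your ``principal obstacle'' is therefore misidentified: the danger is not bookkeeping near $\mathcal{Z}$ but the singular cubes that $\mathcal{Z}$ sees nothing of.

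The argument in the cited sources (\cite[\S6.3]{orruwi}, \cite[Lemma 4.4]{krkuwi}) works cube by cube directly from Lemma~\ref{rlower}(1), with no reference to $\mathcal{Z}$. On a (say positive) singular cube $Q$ one has $\cos(2\pi\mu\cdot y)>1/2$ for all $y\in Q$ and all $\mu\in\mathcal{E}_Q$ with $|\mathcal{E}_Q|\ge\frac{11}{12}\mathcal{N}$; choosing the constant in $q\asymp\sqrt m$ large enough so that $\mu\cdot y$ stays in a fixed interval $(n_\mu-\tfrac16,n_\mu+\tfrac16)$ over $Q$, and using $1-\cos(2\pi\theta)\gg\|\theta\|_{\R/\Z}^2$, one gets
\begin{equation*}
1-r(y)\ \ge\ \frac{1}{\mathcal{N}}\sum_{\mu\in\mathcal{E}_Q}\bigl(1-\cos(2\pi\mu\cdot y)\bigr)\ \gg\ (y-x^*)^t\,M\,(y-x^*),\qquad M:=\frac{1}{\mathcal{N}}\sum_{\mu\in\mathcal{E}_Q}\mu^t\mu,
\end{equation*}
for some $x^*$ (the minimizer of the quadratic form, not necessarily in $Q$ and not a point of $\mathcal{Z}$). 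Since $\mu^t\mu\preceq mI_3$ and $\frac{1}{\mathcal{N}}\sum_{\mathcal{E}}\mu^t\mu=\frac m3 I_3$, removing at most a $\tfrac1{12}$ fraction leaves $M\succeq\frac m4 I_3$, whence $1-r^2(y)\gg m\|y-x^*\|^2$ and $\int_Q(1-r^2)^{-1/2}\ll m^{-1/2}\cdot(\operatorname{diam}Q)^2\ll m^{-3/2}\asymp\mathrm{meas}(Q)$. Your Taylor expansion near $\mathcal{Z}$ (which is correct, including $D(x_0)=0$ and $H(x_0)=-r(x_0)\frac E3 I_3$) is the special case $x^*\in\mathcal{Z}$ of this bound, but the bound itself must be established for every singular cube, not just those near $\mathcal{Z}$, and that is what your proof misses.
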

\noindent
We end this subsection with a property of the covariance function outside the singular set.
%, that will be needed later. %: on the singular set, $|r(x)|$ is bounded away from $1$.
\begin{lemma}
[{\cite[Lemma 6.5]{orruwi}}]
\label{rupper}
For all $x\notin S$, $|r(x)|$ is bounded away from $1$:
\begin{equation*}
|r(x)|\leq 1-\frac{1}{48}.
\end{equation*}
\end{lemma}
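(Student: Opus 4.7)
The plan is to exploit the definitions of positive/negative singular points directly. First, I would observe that since $\mathcal{E} = -\mathcal{E}$, the exponentials in \eqref{r} collapse to
\[r(x) = \frac{1}{\mathcal{N}}\sum_{\mu \in \mathcal{E}} \cos(2\pi \mu \cdot x),\]
a real average of $\mathcal{N}$ quantities each lying in $[-1,1]$. Second, if $x \notin S$ then the cube of the partition containing $x$ is not singular, so in particular $x$ itself cannot be a singular point (otherwise its cube would be declared singular by Definition \ref{singcube}). Hence $x$ is neither positive nor negative singular in the sense of Definition \ref{singpoint}.

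Next I would partition the lattice point set as $\mathcal{E} = A \sqcup B \sqcup C$ according to
\[A = \{\mu \in \mathcal{E} : \cos(2\pi \mu \cdot x) > 3/4\}, \quad B = \{\mu \in \mathcal{E} : \cos(2\pi \mu \cdot x) < -3/4\},\]
with $C$ the complement. The failure of $x$ to be positive (respectively negative) singular translates directly into the density bounds $|A|/\mathcal{N} \leq 11/12$ and $|B|/\mathcal{N} \leq 11/12$; otherwise one could take $\mathcal{E}_x = A$ (respectively $\mathcal{E}_x = B$) in Definition \ref{singpoint} to witness singularity of $x$.

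Finally I would split the cosine sum according to this partition and estimate term by term. Using $\cos(2\pi \mu \cdot x) \leq 1$ on $A$ and $\cos(2\pi \mu \cdot x) \leq 3/4$ on $\mathcal{E} \setminus A$ yields
\[r(x) \leq \frac{|A|}{\mathcal{N}} + \frac{3}{4}\cdot\frac{\mathcal{N}-|A|}{\mathcal{N}} = \frac{3}{4} + \frac{1}{4}\cdot\frac{|A|}{\mathcal{N}} \leq \frac{3}{4} + \frac{11}{48} = \frac{47}{48},\]
and a mirror argument using $\cos(2\pi\mu\cdot x) \geq -1$ on $B$ together with $\cos(2\pi\mu\cdot x) \geq -3/4$ on $\mathcal{E}\setminus B$ gives $r(x) \geq -47/48$.

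There is really no obstacle here; the argument is bookkeeping once the density thresholds have been extracted from Definitions \ref{singpoint}--\ref{S}. The only subtle point worth double-checking is the passage from ``$x$ lies in a non-singular cube'' to ``$x$ is not itself singular'', but this is immediate from Definition \ref{singcube}, which declares a cube singular precisely when it contains at least one singular point.
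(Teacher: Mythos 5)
Your proof is correct, and it is the only reasonable route: the lemma is a direct unwinding of Definitions \ref{singpoint}--\ref{S}. The paper itself does not give a proof but cites \cite[Lemma 6.5]{orruwi}, whose argument (in the two-dimensional setting there) is the same bookkeeping you carry out. Two small points worth being explicit about, both of which you handle correctly: (i) $x \notin S$ means $x$ lies in none of the (closed) cubes declared singular, so any cube of the partition containing $x$ is non-singular and hence, by Definition \ref{singcube}, contains no singular point — in particular $x$ is not singular; (ii) the negation of Definition \ref{singpoint} is exactly that the set $A=\{\mu : \cos(2\pi\mu\cdot x)>3/4\}$ has $|A|/\mathcal{N}\le 11/12$ (and symmetrically for $B$), which feeds the crude averaging bound $r(x)\le \frac{3}{4}+\frac{1}{4}\cdot\frac{|A|}{\mathcal{N}}\le \frac{47}{48}$. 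No gap.
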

Thanks to the lemma, on the non-singular set $\mathbb{T}^3\setminus S$ we have the following approximations:
\begin{equation}
\label{tay1}
\frac{1}{\sqrt{1-r^2}}=1+\frac{1}{2}r^2+\frac{3}{8}r^4+O(r^6)
\end{equation}
and
\begin{equation}
\label{tay2}
\frac{1}{1-r^2}=1+r^2+O(r^4).
\end{equation}
%whence
%\begin{gather*}
%\frac{r}{1-r^2}=r+r^3+O(r^5).
%\end{gather*}

\subsection{Asymptotics for $K_2$ on the non-singular set}
\begin{lemma}
\label{lemma5.1}
Let $(w_1,w_2)\sim N(0,\Omega)$, $\Omega=I_6+
\begin{pmatrix}
X & Y \\ Y & X
\end{pmatrix}$, with $rank(X)=1$. Then:
\begin{multline*}
\mathbb{E}[\|w_1\|\cdot\|w_2\|]
=
\frac{8}{\pi}
\cdot
\left[
1+
\frac{tr(X)}{3}+
\frac{tr(Y^2)}{18}
-
\frac{tr(XY^2)}{45}
-
\frac{tr(X^2)}{45}
\right.
\\
\left.
+
\frac{tr(Y^4)}{900}
+
\frac{tr(Y^2)^2}{1800}
-
\frac{tr(X)tr(Y^2)}{90}
\right]
+
O(tr(X^3)+tr(Y^6))
.
\end{multline*}
\end{lemma}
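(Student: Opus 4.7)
The plan is to Taylor-expand $G(X,Y) := \mathbb{E}[\|w_1\|\,\|w_2\|]$ to total degree four around $(X,Y)=(0,0)$ using Berry's Fourier-analytic method. At $X=Y=0$, $w_1$ and $w_2$ are independent standard Gaussians in $\mathbb{R}^3$, and since the norm of a three-dimensional standard Gaussian has mean $2\sqrt{2/\pi}$, one gets $G(0,0)=8/\pi$, matching the leading constant.

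The first step is to record the integral identity
\begin{equation*}
\|w\| \;=\; \frac{1}{\pi^2}\int_{\mathbb{R}^3}\frac{1-\cos(\xi\cdot w)}{\|\xi\|^4}\,d\xi, \qquad w\in\mathbb{R}^3,
\end{equation*}
proved by passing to spherical coordinates aligned with $w$ and evaluating $\int_0^\infty (t-\sin t)/t^3\,dt = \pi/4$. Applying this to both factors and using Fubini gives
\begin{equation*}
G(X,Y) \;=\; \frac{1}{\pi^4}\iint_{(\mathbb{R}^3)^2}\frac{\Phi_\Omega(\xi_1,\xi_2)}{\|\xi_1\|^4\|\xi_2\|^4}\,d\xi_1\,d\xi_2,
\end{equation*}
where $\Phi_\Omega$ is a fixed linear combination of the characteristic functions $\widehat{\Omega}(\xi_1,\pm\xi_2) = \exp(-\tfrac{1}{2}(\xi_1,\pm\xi_2)\,\Omega\,(\xi_1,\pm\xi_2)^T)$ obtained by expanding the product of cosines. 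I would then factor
\begin{equation*}
\widehat{\Omega}(\xi_1,\pm\xi_2) \;=\; e^{-(\|\xi_1\|^2+\|\xi_2\|^2)/2}\cdot\exp\!\Bigl(-\tfrac{1}{2}\bigl(\xi_1^T X\xi_1+\xi_2^T X\xi_2\pm 2\xi_1^T Y\xi_2\bigr)\Bigr).
\end{equation*}

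The second step is to Taylor-expand the right-hand factor in the small parameters $X,Y$, keeping all monomials of total degree $\leq 4$. Substituting back and integrating term by term, the odd-in-$Y$ pieces cancel after summing the $\pm$ versions in $\Phi_\Omega$, while each surviving summand reduces, by rotational invariance of the Gaussian weight, to a scalar multiple of a trace of products of $X$ and $Y$. Concretely one uses moment identities such as
\begin{equation*}
\int_{\mathbb{R}^3}\frac{\xi^T A\xi\, e^{-\|\xi\|^2/2}}{\|\xi\|^4}\,\frac{d\xi}{(2\pi)^{3/2}} \;=\; \alpha_1\,\mathrm{tr}(A),
\end{equation*}
together with Isserlis' theorem applied to fourth-order quadratic forms such as $(\xi^T A\xi)^2$, $(\xi_1^T Y\xi_2)^2$ and $(\xi_1^T Y\xi_2)^4$ to express everything in the invariants $\mathrm{tr}(X)$, $\mathrm{tr}(X^2)$, $\mathrm{tr}(Y^2)$, $\mathrm{tr}(Y^4)$, $\mathrm{tr}(XY^2)$, $\mathrm{tr}(X)\mathrm{tr}(Y^2)$, and $\mathrm{tr}(Y^2)^2$. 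The rank-one hypothesis $X=vv^T$ is used both to collapse $\mathrm{tr}(X^2) = \mathrm{tr}(X)^2$ where needed and to identify every cubic-in-$X$ contribution with $\mathrm{tr}(X)^3 = \mathrm{tr}(X^3)$, which is absorbed into the stated error.

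The main obstacle is the combinatorial bookkeeping: one must enumerate every monomial of total degree at most four, pair up Wick contractions correctly (among the $\xi_1$'s, among the $\xi_2$'s, and in the mixed bilinear factors $\xi_1^T Y\xi_2$), and verify that the numerical coefficients $1/3,\, 1/18,\, -1/45,\, 1/900,\, 1/1800,\, -1/90$ emerge exactly. Determining the scalar constants $\alpha_j$ requires a handful of explicit one-dimensional integrals in spherical coordinates, and care is needed because the weight $\|\xi_i\|^{-4}$ is singular at the origin, with the singularity rendered harmless only by the cancellation built into the factor $1-\cos$. A final check, using $\mathrm{rank}(X)=1$ and elementary bounds such as $|\mathrm{tr}(XY^2)|\ll\mathrm{tr}(X)^2+\mathrm{tr}(Y^4)$ where convenient, confirms that all discarded remainders lie in $O(\mathrm{tr}(X^3)+\mathrm{tr}(Y^6))$.
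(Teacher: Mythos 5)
Your proposal follows a genuinely different integral representation than the paper's. The paper uses Berry's one-dimensional identity $\|w\|=(2\pi)^{-1/2}\int_0^\infty(1-e^{-t\|w\|^2/2})\,t^{-3/2}\,dt$, which turns $\mathbb{E}[\|w_1\|\|w_2\|]$ into $\frac{1}{2\pi}\iint (f(0,0)-f(t,0)-f(0,s)+f(t,s))\,(ts)^{-3/2}\,dt\,ds$ with $f(t,s)=\det(I_6+J(t,s))^{-1/2}$; the Taylor coefficients then fall out of the determinant expansion $(\det(I+P))^{-1/2}=1-\tfrac12\operatorname{tr}P+\tfrac14\operatorname{tr}(P^2)+\tfrac18(\operatorname{tr}P)^2+O(P^3)$, and no Wick combinatorics is needed. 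You instead use a three-dimensional Fourier representation of $\|w\|$ (which is correct; the radial reduction yields $\int_0^\infty (t-\sin t)t^{-3}\,dt=\pi/4$ as you state), and replace the determinant expansion with Isserlis/Wick contractions. Both are valid Taylor-expansion-in-$(X,Y)$ strategies; the paper's route buys cleaner algebra because the Gaussian integral collapses to a determinant, while yours is more hands-on and avoids block-matrix manipulations.

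Two caveats that deserve to be spelled out rather than merely acknowledged. First, the differencing in $\Phi_\Omega$ is essential and must be preserved after you Taylor-expand: $\Phi_\Omega = 1-\widehat\Omega(\xi_1,0)-\widehat\Omega(0,\xi_2)+\tfrac12\widehat\Omega(\xi_1,\xi_2)+\tfrac12\widehat\Omega(\xi_1,-\xi_2)$, so it includes the two marginal characteristic functions (your phrasing suggests only $\widehat\Omega(\xi_1,\pm\xi_2)$). After expanding the exponential in $X,Y$, one must regroup so that each coefficient of a monomial in $(X,Y)$ still carries a factor vanishing quadratically in both $\xi_1$ and $\xi_2$; otherwise individual term-by-term integrals against $\|\xi_1\|^{-4}\|\xi_2\|^{-4}$ diverge (e.g.\ the pure $(\xi_1^T X\xi_1)$ contribution must come paired with a factor $(1-e^{-\|\xi_2\|^2/2})$). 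This is precisely the analogue of the step the paper must make, observing $h_{X,Y}(t,s)=O_{X,Y}(ts)$ near the origin so the $(ts)^{-3/2}$ weight may be integrated against the remainder. Without such a regrouping, the "Taylor-expand then integrate" step is not justified; you would need to make this explicit. Second, a minor point: in the application $X$ is rank one but negative semidefinite (it equals $-cD^tD$ with $c>0$), so $X=-vv^T$ rather than $vv^T$; this does not affect $\operatorname{tr}(X)^2=\operatorname{tr}(X^2)$, which holds for any symmetric rank-one matrix. Finally, you have not carried out the Wick bookkeeping, so the coefficients $1/3,\,1/18,\,-1/45,\,1/900,\,1/1800,\,-1/90$ remain unverified; in either route, that computation is the bulk of the work.
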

\noindent
The proof of Lemma \ref{lemma5.1} is quite lengthy and takes up the whole of Appendix \ref{Berry's}. Assuming it, we arrive at the asymptotics for $K_2$ on $\mathbb{T}^3\setminus S$.
\begin{prop}
\label{prop4.5}
For $x\in\mathbb{T}^3$ such that $r(x)$ is bounded away from $\pm 1$, we have the following asymptotics for the (scaled) two point correlation function:
\begin{equation*}
%\label{KLeps}
K_2(x)=\frac{4}{\pi^2}+L_2(x)+\epsilon(x)
\end{equation*}
where
\begin{multline}
\label{L2}
L_2(x):=
\frac{4}{\pi^2}
\left[
\frac{1}{2}r^2
+\frac{tr(X)}{3}
+\frac{tr(Y^2)}{18}
+\frac{3}{8}r^4
-\frac{tr(XY^2)}{45}
-\frac{tr(X^2)}{45}
\right.
\\
\left.
+\frac{tr(Y^4)}{900}
+\frac{tr(Y^2)^2}{1800}
-\frac{tr(X)tr(Y^2)}{90}
+\frac{1}{6}r^2tr(X)
+\frac{1}{36}r^2tr(Y^2)
\right]
\end{multline}
and
\begin{equation*}
\epsilon(x):=O[r^6+tr(X^3)+tr(Y^6)].
\end{equation*}
\end{prop}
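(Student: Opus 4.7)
The plan is to combine Proposition \ref{explicit}, which expresses $K_2(x)$ in terms of the Gaussian expectation $\mathbb{E}[\|w_1\|\|w_2\|]$, with Lemma \ref{lemma5.1} (which expands that expectation in the entries of $X$, $Y$) and the scalar Taylor expansion \eqref{tay1} for $1/\sqrt{1-r^2}$. The hypothesis that $r(x)$ is bounded away from $\pm 1$ is used precisely to validate \eqref{tay1}; on the non-singular set this is exactly Lemma \ref{rupper}.

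First, I would verify that the matrix $X(x)$ has rank $1$, so that Lemma \ref{lemma5.1} applies. This is immediate from definition \eqref{X}: $X$ is a scalar multiple of $D^{t}D$, the outer product of the $1\times 3$ vector $D$ with itself, and therefore has rank at most one. Consequently, by Proposition \ref{explicit} and Lemma \ref{lemma5.1}, and using $\frac{1}{2\pi}\cdot\frac{8}{\pi}=\frac{4}{\pi^2}$,
\[
K_2(x) \;=\; \frac{4}{\pi^2}\cdot\frac{1}{\sqrt{1-r^2}}\cdot\bigl[1+Q(X,Y)\bigr]\;+\;\frac{1}{\sqrt{1-r^{2}}}\cdot O\bigl(\operatorname{tr}(X^{3})+\operatorname{tr}(Y^{6})\bigr),
\]
where $Q(X,Y)$ is the explicit polynomial of degree $2$ and $4$ in the traces of $X$, $Y$ appearing in Lemma \ref{lemma5.1}. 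Since $1/\sqrt{1-r^2}=O(1)$ on $\mathbb{T}^3\setminus S$, the prefactor in the error term is harmless.

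Next, I would substitute the Taylor expansion $\frac{1}{\sqrt{1-r^2}}=1+\tfrac{1}{2}r^{2}+\tfrac{3}{8}r^{4}+O(r^{6})$ and multiply out. Assigning weight $1$ to $r$ and weight $2$ to each of $X$, $Y$, the terms of total weight $\le 4$ reproduce exactly the expression \eqref{L2}: the weight-$2$ contributions $\tfrac12 r^{2},\tfrac{1}{3}\operatorname{tr}(X),\tfrac{1}{18}\operatorname{tr}(Y^{2})$ coming from the leading $1$'s in the two expansions; the pure weight-$4$ contributions $\tfrac38 r^{4}$ and the four quartic traces from Lemma \ref{lemma5.1}; and, crucially, the two mixed cross products $\tfrac12 r^{2}\cdot\tfrac13 \operatorname{tr}(X)=\tfrac{1}{6}r^{2}\operatorname{tr}(X)$ and $\tfrac12 r^{2}\cdot\tfrac{1}{18}\operatorname{tr}(Y^{2})=\tfrac{1}{36}r^{2}\operatorname{tr}(Y^{2})$. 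Collecting these gives $L_2(x)$ on the nose.

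Finally, I would absorb every remaining cross term (of total weight $\ge 5$) into $\epsilon(x)=O(r^{6}+\operatorname{tr}(X^{3})+\operatorname{tr}(Y^{6}))$. Here Lemma \ref{O(1)} provides entry-wise bounds $X,Y=O(1)$, and Young's inequality handles the rest: e.g.\ $r^{2}\operatorname{tr}(X^{2})\ll r^{6}+\operatorname{tr}(X^{2})^{3/2}$, and since $X$ has rank $1$ with sole non-zero eigenvalue $\lambda$, $\operatorname{tr}(X^{2})^{3/2}=|\lambda|^{3}=|\operatorname{tr}(X^{3})|$, as desired. Analogous inequalities control $r^{4}\operatorname{tr}(X)$, $r^{2}\operatorname{tr}(XY^{2})$, $r^{2}\operatorname{tr}(Y^{4})$, $r^{2}\operatorname{tr}(Y^{2})^{2}$, as well as all higher-weight products arising from multiplying the two expansions (and from the $O(r^6)$ term against $Q$). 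The main obstacle is not this proposition itself but the underlying Lemma \ref{lemma5.1}, whose Berry-method derivation (Appendix \ref{Berry's}) requires a careful multi-variable expansion of the determinant, inverse and Gaussian density of $\Omega$ around $I_6$, and furnishes the precise constants $\tfrac{1}{3},\tfrac{1}{18},-\tfrac{1}{45},\tfrac{1}{900}$, etc., that appear in \eqref{L2}.
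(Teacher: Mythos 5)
Your proposal is correct and follows exactly the same route as the paper: Proposition \ref{explicit} for the factorisation of $K_2$, the expansion \eqref{tay1} for $1/\sqrt{1-r^2}$, and Lemma \ref{lemma5.1} (whose hypothesis $\operatorname{rank}(X)=1$ you rightly verify from $X \propto D^t D$) for the Gaussian factor, followed by collecting terms up to the stated weight. The paper states this in one line; your added details on the cross-term absorption via $X,Y=O(1)$ and Young's inequality are a correct elaboration of what the paper leaves implicit, not a different argument.
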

\begin{proof}[Proof of Proposition \ref{prop4.5} assuming Lemma \ref{lemma5.1}]
By Proposition \ref{explicit}, we have \eqref{expl}; for the first factor of \eqref{expl}, as $r(x)$ is bounded away from $\pm 1$, we may use the expansion \eqref{tay1}. On the second factor of \eqref{expl}, apply Lemma \ref{lemma5.1} with $X,Y$ as in \eqref{X} and \eqref{Y}. 
%\begin{multline*}
%K_2(x)=\frac{1}{2\pi \sqrt{1-r^2(x)}}\cdot\mathbb{E}[|w_1|\cdot|w_2|]
%\\=
%\frac{1}{2\pi}\left(1+\frac{1}{2}r^2+\frac{3}{8}r^4+O(r^6)\right)\cdot \frac{8}{\pi}\bigg[
%1
%+
%\frac{tr(X)}{3}
%+
%\frac{tr(Y^2)}{18}
%-
%\frac{tr(XY^2)}{45}
%\\
%-
%\frac{tr(X^2)}{45}
%+
%\frac{tr(Y^4)}{900}
%+
%\frac{tr(Y^2)^2}{1800}
%-
%\frac{tr(X)tr(Y^2)}{90}
%+O(tr(X^3)+tr(Y^6))\bigg].
%\end{multline*}
%The latter multiplication yields the result of the present proposition.
\end{proof}
\noindent
Later we will need to integrate $L_2$ term-wise.
\\
{\bf Notation.} To simplify the formulation of our next result, We will write $f\sim_\psi g$ if
\begin{equation*}
|f-g|=O\left(\frac{|\mathcal{X}(4)|}{\mathcal{N}^4}+\frac{|\mathcal{C}(6)|}{\mathcal{N}^6}\right)
\end{equation*}
and we will write $f\sim_\varphi g$ if
\begin{equation*}
|f-g|=O\left(\frac{1}{m^{1/28-o(1)}\cdot\mathcal{N}^2}+\frac{|\mathcal{X}(4)|}{\mathcal{N}^4}+\frac{|\mathcal{C}(6)|}{\mathcal{N}^6}\right).
\end{equation*}
\begin{lemma}
%[Analogue of Lemma 4.6]
\label{lemma4.6}
We have the following estimates:
\begin{small}
\begin{enumerate}
\begin{multicols}{2}
\item
\begin{equation*}
\int_{\mathbb{T}^3}tr X(x)dx\sim_\psi-\frac{3}{\mathcal{N}}-\frac{3}{\mathcal{N}^2}.
\end{equation*}
\item
\begin{equation*}
\int_{\mathbb{T}^3}tr Y^2(x)dx\sim_\psi\frac{9}{\mathcal{N}}-\frac{6}{\mathcal{N}^2}.
\end{equation*}
\item
\begin{equation*}
\int_{\mathbb{T}^3}tr (XY^2)(x)dx\sim_\psi-\frac{9}{\mathcal{N}^2}.
\end{equation*}
\item
\begin{equation*}
\int_{\mathbb{T}^3}tr (X^2)(x)dx\sim_\psi\frac{15}{\mathcal{N}^2}.
\end{equation*}
\item
\begin{equation*}
\int_{\mathbb{T}^3}tr (Y^4)(x)dx\sim_\varphi\frac{351}{5}\cdot\frac{1}{\mathcal{N}^2}.
\end{equation*}
\item
\begin{equation*}
\int_{\mathbb{T}^3}(tr Y^2(x))^2dx\sim_\varphi\frac{567}{5}\cdot\frac{1}{\mathcal{N}^2}.
\end{equation*}
\item
\begin{equation*}
\int_{\mathbb{T}^3}(tr X\cdot tr Y^2)(x)dx\sim_\psi-\frac{27}{\mathcal{N}^2}.
\end{equation*}
\item
\begin{equation*}
\int_{\mathbb{T}^3}(r^2tr X)(x)dx\sim_\psi-\frac{3}{\mathcal{N}^2}.
\end{equation*}
\item
\begin{equation*}
\int_{\mathbb{T}^3}(r^2tr (Y^2))(x)dx\sim_\psi\frac{15}{\mathcal{N}^2}.
\end{equation*}
\item
\begin{equation*}
\int_{\mathbb{T}^3}tr (X^3)(x)dx=O\left(\frac{|\mathcal{C}(6)|}{\mathcal{N}^6}\right).
\end{equation*}
\item
\begin{equation*}
\int_{\mathbb{T}^3}tr (Y^6)(x)dx=O\left(\frac{|\mathcal{C}(6)|}{\mathcal{N}^6}\right).
\end{equation*}
\end{multicols}
\end{enumerate}
\end{small}
\end{lemma}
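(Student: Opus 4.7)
My plan is to Taylor-expand $X$ and $Y$ in powers of $r$ on the non-singular set and reduce each integral to a Fourier-theoretic sum over lattice-point tuples summing to zero; pairwise-cancelling tuples supply the explicit leading constants, while non-degenerate and higher-order tuples supply the errors $|\mathcal{X}(4)|/\mathcal{N}^4$ and $|\mathcal{C}(6)|/\mathcal{N}^6$. Writing $\tilde X := -\tfrac{3}{E}D^tD$ and $\tilde Y := -\tfrac{3}{E}H$, \eqref{X}--\eqref{Y} give
\[
X = \tilde X \sum_{k\geq 0} r^{2k}, \qquad Y = \tilde Y + \tilde X \sum_{k\geq 0} r^{2k+1},
\]
convergent on $\mathbb{T}^3\setminus S$ by Lemma \ref{rupper}. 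On $S$ each integrand is $O(1)$ by Lemma \ref{O(1)}, and Proposition \ref{singset}(ii) with $\ell=6$ yields $\mathrm{meas}(S)=O(|\mathcal{C}(6)|/\mathcal{N}^6)$, so the singular contribution falls within the target error.

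\textbf{Reduction to lattice sums.} The entries of $\tilde X$ and $\tilde Y$ are Fourier series supported on $\mathcal{E}+\mathcal{E}$ and $\mathcal{E}$ respectively, with coefficients quadratic in the Fourier frequency. Thus each integrand of the form $r^a \prod \tilde X_{\cdot\cdot}\prod \tilde Y_{\cdot\cdot}$ Fourier-expands and, after applying $\int_{\mathbb{T}^3} e(\xi\cdot x)\,dx=\delta_{\xi=0}$, becomes a sum over tuples $(\mu_1,\ldots,\mu_\ell)\in\mathcal{C}(\ell)$ weighted by a polynomial in the inner products $\mu_i\cdot\mu_j$. The decomposition \eqref{splitsums} (with its length-six analogue) splits this sum into three parts: (i) pairwise-cancelling tuples, producing the closed-form main terms; (ii) diagonal tuples $\mathcal{D}''$, whose contribution is of size $O(\mathcal{N})$ relative to the normalization and is absorbed into the error; and (iii) non-degenerate tuples, bounded by $|\mathcal{X}(4)|$ or $|\mathcal{C}(6)|$. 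After the normalization factors $m^{-j}\mathcal{N}^{-k}$ coming from $\tilde X, \tilde Y$, all error terms match $\sim_\psi$ or $\sim_\varphi$.

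\textbf{Main-term evaluation.} The inner-product products arising from pairwise-cancelling $4$-tuples reduce to the moments $B_k$ of \eqref{Bk}. For items (1)--(4), (7)--(9), only the exact values $B_0=1$ and $B_2=1/3$ enter, giving rational leading constants with no equidistribution input, hence the $\sim_\psi$ error. For items (5) and (6), the three distinct pairings of $(\mu_1,\mu_2,\mu_3,\mu_4)\in\mathcal{C}(4)$ each contribute: two yield $B_2$, while the crossed pairing $(\mu_1,\mu_2,-\mu_1,-\mu_2)$ produces $\sum (\mu_1\cdot\mu_2)^4 = m^4\mathcal{N}^2 B_4$, and by \eqref{B4} we have $B_4=1/5+O(m^{-1/28+o(1)})$. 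This is what forces the extra $m^{-1/28+o(1)}/\mathcal{N}^2$ error present in $\sim_\varphi$. As a sanity check for (5): the three pairings of $\mathrm{tr}(\tilde Y^4)$ yield $81/\mathcal{N}^2 \cdot (1/3 + B_4 + 1/3) = 351/(5\mathcal{N}^2) + O(m^{-1/28+o(1)}/\mathcal{N}^2)$, matching the claim. Items (10) and (11) need no main term at all: the trivial bound $|\mu_i\cdot\mu_j|\leq m$ applied entry-wise to $\mathrm{tr}(\tilde X^3)$ and $\mathrm{tr}(\tilde Y^6)$ directly yields $O(|\mathcal{C}(6)|/\mathcal{N}^6)$.

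\textbf{Main obstacle.} The chief difficulty is the combinatorial bookkeeping: for each of the eleven items I must exhaustively enumerate the pairwise-cancellation patterns in $\mathcal{C}(\ell)$, evaluate each via either an exact identity or Lemma \ref{innerk}, and verify that all Taylor cross-terms (e.g.\ the $r^2 \tilde X^2/(1-r^2)^2$ contribution inside $\mathrm{tr}(Y^2)$ for item (2), or terms like $r\,\mathrm{tr}(\tilde Y^3\tilde X)$ inside $\mathrm{tr}(Y^4)$ for item (5)) sit at a lower order than the stated leading terms. Items (5) and (6) are the most delicate, since they simultaneously require all three pairings at $\ell=4$ and the non-trivial fourth moment $B_4$, which is precisely what propagates the equidistribution error into the final statement.
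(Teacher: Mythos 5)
Your proposal follows precisely the paper's strategy: expand $X$, $Y$ via \eqref{tay1}--\eqref{tay2} on $\mathbb{T}^3\setminus S$, bound the singular contribution with Lemma \ref{O(1)} and $\mathrm{meas}(S)\ll\mathcal{R}(6)$, reduce each integral by orthogonality to a sum over $\mathcal{C}(\ell)$, split via \eqref{splitsums}, and evaluate the surviving pairings with the moments $B_k$ of Lemma \ref{innerk} (with $B_4$ supplying the equidistribution error in items (5)--(6)). The only difference is presentational---the paper routes these computations through the auxiliary Lemma \ref{lemma5.4} on the raw integrals of $r$, $D$, $H$, whereas you collapse the two steps---but the substance is the same.
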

\noindent
The proof of Lemma \ref{lemma4.6} is given in Section \ref{technical}.

\subsection{Proof of Proposition \ref{theprop}}
\label{pfthm}
Assuming the above preparatory results, we arrive at the asymptotics for the nodal area variance.
\begin{proof}[Proof of Proposition \ref{theprop}]
In the expression for the variance of Proposition \ref{precisevar}, we separate the domain of integration over the singular set $S\subset\mathbb{T}^3$ of Definition \ref{S} and its complement:
\begin{equation}
\label{pfapproxvar0}
Var(\mathcal{A})
=
%\frac{E}{3}\int_{\mathbb{T}^3}\left(K_2(x)-\frac{4}{\pi^2}\right)dx
%\\=
\frac{E}{3}\int_{\mathbb{T}^3\setminus S}\left(K_2(x)-\frac{4}{\pi^2}\right)dx
+
\frac{E}{3}\int_{S}\left(K_2(x)-\frac{4}{\pi^2}\right)dx.
\end{equation}
By Lemma \ref{rupper}, the asymptotics for $K_2$ of Proposition \ref{prop4.5} hold outside the singular set:
\begin{equation}
\label{pfapproxvar1}
\int_{\mathbb{T}^3\setminus S}\left(K_2(x)-\frac{4}{\pi^2}\right)dx
=
\int_{\mathbb{T}^3\setminus S}L_2(x)dx
+
O\int_{\mathbb{T}^3\setminus S}|\epsilon(x)|dx.
\end{equation}
Note that the constant term $4/\pi^2$ of the nodal area variance cancels out with the expectation squared. Next, recall Proposition \ref{singset}:
\begin{equation}
\label{pfapproxvar2}
\int_{S}|K_2(x)|dx
\ll
meas(S)
\ll
\mathcal{R}(6)
=
\frac{|\mathcal{C}(6)|}{\mathcal{N}^6}.
\end{equation}
Inserting \eqref{pfapproxvar1} and \eqref{pfapproxvar2} into \eqref{pfapproxvar0} gives
\begin{equation}
\label{pfapproxvar3}
Var(\mathcal{A})
=
\frac{E}{3}\int_{\mathbb{T}^3\setminus S}L_2(x)dx
+
E
\left(
O
\left(
\int_{\mathbb{T}^3\setminus S}|\epsilon(x)|dx
\right)
+
O
\left(\frac{|\mathcal{C}(6)|}{\mathcal{N}^6}\right)
\right).
\end{equation}
The former error term is redundant by Lemma \ref{lemma4.6}, parts 10 and 11.
%\begin{equation*}
%\int_{\mathbb{T}^3\setminus S}|\epsilon(x)|dx
%\leq
%\int_{\mathbb{T}}|\epsilon(x)|dx
%\ll
%\frac{|\mathcal{C}(6)|}{N^6}.
%\end{equation*}
Using $|r(x)|\leq 1$ and Lemma \ref{O(1)} in the expression \eqref{L2} for $L_2$, we get
\begin{equation*}
\label{useO(1)}
\left|\int_{\mathbb{T}^3\setminus S}L_2(x)dx
-
\int_{\mathbb{T}^3}L_2(x)dx\right|
\ll
\int_{S}|L_2(x)|dx
\ll
meas(S)
\end{equation*}
which together with \eqref{pfapproxvar3} and \eqref{pfapproxvar2} implies
\begin{equation}
\label{approxvar}
Var(\mathcal{A})=\frac{E}{3}\int_{\mathbb{T}^3}L_2(x)dx+O\left(E\cdot\frac{|\mathcal{C}(6)|}{\mathcal{N}^6}\right).
\end{equation}
We integrate \eqref{approxvar} term-wise (recall the expression \eqref{L2} for $L_2$), and, as the integral is over the whole torus, we may apply the considerations
\begin{equation*}
\int_{\mathbb{T}^3}r^2(x)dx=\frac{1}{\mathcal{N}},
\qquad
\int_{\mathbb{T}^3}r^4(x)dx=\frac{3}{\mathcal{N}^2}+O\left(\frac{1}{\mathcal{N}^3}+\frac{|\mathcal{X}(4)|}{\mathcal{N}^4}\right)
\end{equation*}
(see Lemma \ref{lemma5.4}), and the estimates of Lemma \ref{lemma4.6}, to deduce:
\begin{multline*}
Var(\mathcal{A})
=\frac{E}{3}\frac{4}{\pi^2}\int_{\mathbb{T}^3}
\left[
\frac{1}{2}r^2+\frac{tr(X)}{3}+\frac{tr(Y^2)}{18}
+\frac{3}{8}r^4-\frac{tr(XY^2)}{45}-\frac{tr(X^2)}{45}+\frac{tr(Y^4)}{900}
\right.
\\
\left.
+\frac{tr(Y^2)^2}{1800}
-\frac{tr(X)tr(Y^2)}{90}
+\frac{1}{6}r^2tr(X)+\frac{1}{36}r^2tr(Y^2)
\right]dx
+O\left(E\cdot\frac{|\mathcal{C}(6)|}{\mathcal{N}^6}\right)
\\=\frac{E}{3}\frac{4}{\pi^2}
\left[
\frac{1}{\mathcal{N}}
\left(
\frac{1}{2}
-
\frac{1}{3}
\cdot(-3)
+
\frac{1}{18}
\cdot 9
\right)
\right.
\\
\left.
+
\frac{1}{\mathcal{N}^2}
\left(
\frac{1}{3}
\cdot(-3)
+
\frac{1}{18}
\cdot(-6)
+
\frac{3}{8}\cdot 3
-
\frac{1}{45}(-9)
-
\frac{1}{45}\cdot 15
+
\frac{1}{900}\cdot\frac{351}{5}
\right.
\right.
\\
\left.
\left.
+
\frac{1}{1800}\cdot\frac{567}{5}
-
\frac{1}{90}(-27)
+\frac{1}{6}(-3)+\frac{1}{36}\cdot 15\right)
\right.
\\
\left.
+O\left(\frac{1}{m^{1/28-o(1)}\cdot\mathcal{N}^2}+\frac{|\mathcal{X}(4)|}{\mathcal{N}^4}+\frac{|\mathcal{C}(6)|}{\mathcal{N}^6}\right)
\right],
\end{multline*}
where we note the error term $m/\mathcal{N}^3$ is negligible. The terms of order $m/\mathcal{N}$ cancel perfectly: as noted in the introduction, the $3$-dimensional torus exhibits arithmetic Berry cancellation (see the next section for more details). The terms of order $m/\mathcal{N}^2$ sum up to
\begin{equation*}
\frac{E}{3}\cdot\frac{4}{\pi^2}\cdot\frac{1}{\mathcal{N}^2}\cdot\frac{6}{375}
=
\frac{32}{375}\cdot\frac{m}{\mathcal{N}^2},
\end{equation*}
hence, recalling \eqref{totnumlp}, the claim of the present proposition.
\end{proof}

\subsection{A note on arithmetic Berry cancellation}
\label{aBc}
Let us analyse in more detail the vanishing of the term of order $m/\mathcal{N}$ of the nodal area variance (cf. \cite[Section 4.2]{krkuwi}). The leading term of $K_2(x)-4/\pi^2$ is (recall \eqref{L2}, \eqref{X} and \eqref{Y})
\begin{equation*}
\frac{4}{\pi^2}\left[\frac{1}{2}r^2+\frac{tr(X)}{3}
+\frac{tr(Y^2)}{18}\right]
\sim
\frac{4}{\pi^2}\left[\frac{1}{2}r^2+\frac{1}{3}\left(\frac{3}{E}DD^t\right)
+\frac{1}{18}\left(\frac{9}{E^2}tr(H^2)\right)
\right]=\frac{2}{\pi^2}v(x),
\end{equation*}
having defined
\begin{equation*}
v(x):=r^2(x)-\frac{2}{E}(DD^t)(x)+\frac{1}{E^2}tr(H^2(x)).
\end{equation*}
The latter expression has the same shape as the two-dimensional case \cite[(39)]{krkuwi}: the remainder of this discussion is essentially identical to \cite[Section 4.2]{krkuwi}. One rewrites
\begin{equation*}
v(x)=\frac{4}{N^2}\sum_{\mu_1,\mu_2\in\mathcal{E}}e([\mu_1+\mu_2]\cdot x)\cdot\cos^4\left(\frac{\varphi_{\mu_1,\mu_2}}{2}\right),
\end{equation*}
where $\varphi_{\mu_1,\mu_2}$ is the angle between the two lattice points $\mu_1,\mu_2$. On integrating over the torus \eqref{precisevarexpression}, all summands such that $\mu_1+\mu_2\neq 0$ vanish (see also \eqref{circlemethod} to follow). As $\varphi_{\mu_1,-\mu_1}=\pi$, the arithmetic cancellation phenomenon is tantamount to $\cos^4(\varphi/2)$ vanishing at $\pi$, similarly to the two-dimensional problem.

%The cardinality of $\mathcal{C}(4)$ is dominated by the number of diagonal 4-tuples $|\mathcal{D}|$. Therefore, sums over $\mathcal{D'}$ and $\mathcal{X}(4)$ will always be bounded using Cauchy-Schwartz.

%\begin{lemma}
%\label{sumovers4}
%\begin{equation*}
%\sum_{\mathcal{C}(4)}
%\langle\mu^{(1)},\mu^{(2)}\rangle
%\cdot
%\langle\mu^{(3)},\mu^{(4)}\rangle
%=
%m^2N^2\left(\frac{5}{3}+O\left(\frac{|\mathcal{X}(4)|}{N^2}\right)\right).
%\end{equation*}
%\end{lemma}
%\begin{proof}
%Use Definition \ref{S4stru}. On $\mathcal{D'}$,

%\end{proof}

%\begin{lemma}
%\label{S5}
%\begin{equation*}
%|S_5|\ll N|\mathcal{C}(4)|+\frac{|\mathcal{C}(6)|}{N}.
%\end{equation*}
%\end{lemma}
%\begin{proof}
%By Cauchy-Schwartz,
%\begin{equation*}
%\frac{|S_5|}{N^5}=\mathcal{R}_5(m):=\int_{\mathbb{T}^3}|r^5(x)|dx\leq\sqrt{\int_{\mathbb{T}^3}r^4dx\int_{\mathbb{T}^3}r^6dx}\leq\frac{|\mathcal{C}(4)|}{N^4}+\frac{|\mathcal{C}(6)|}{N^6}.
%\end{equation*}
%\end{proof}

\section{The leading term of the variance: proof of Lemma \ref{lemma4.6}}
\label{technical}
\subsection{Preparatory results}
\label{S4stru}
Recall the expression of the covariance function \eqref{r} and its derivatives \eqref{D} and \eqref{H}; also recall the notation of Definition \ref{correl} for the set of lattice point correlations.
\begin{lemma}
%[Analogue of Lemma 5.4]
\label{lemma5.4}
 We have the following estimates, where $\sim_\rho$ means up to an error
\begin{equation*}
O\left(\frac{1}{\mathcal{N}^3}+\frac{|\mathcal{X}(4)|}{\mathcal{N}^4}\right)
\end{equation*}
and $\sim_\sigma$ means up to an error 
\begin{equation*}
O\left(\frac{1}{m^{1/28-o(1)}\cdot\mathcal{N}^2}+\frac{|\mathcal{X}(4)|}{\mathcal{N}^4}\right):
\end{equation*}
\begin{small}
\begin{enumerate}
\begin{multicols}{2}
\item
\begin{gather*}
\int_{\mathbb{T}^3}r^2(x)dx=\frac{1}{\mathcal{N}};
\\
\int_{\mathbb{T}^3}r^4(x)dx\sim_\rho\frac{3}{\mathcal{N}^2}.
\end{gather*}
%\begin{multicols}{2}
\item
\begin{gather*}
\frac{1}{E}\int_{\mathbb{T}^3}(DD^t)(x)dx=\frac{1}{\mathcal{N}};
\\
\frac{1}{E^2}\int_{\mathbb{T}^3}(DD^t)^2(x)dx\sim_\rho\frac{5}{3}\cdot\frac{1}{\mathcal{N}^2}.
\end{gather*}
\item
\begin{equation*}
\frac{1}{E}\int_{\mathbb{T}^3}(r^2DD^t)(x)dx\sim_\rho\frac{1}{\mathcal{N}^2}.
\end{equation*}
\item
\begin{gather*}
\frac{1}{E^2}\int_{\mathbb{T}^3}tr(H^2(x))dx=\frac{1}{\mathcal{N}};
\\
\frac{1}{E^2}\int_{\mathbb{T}^3}(r^2tr(H^2))(x)dx\sim_\rho\frac{5}{3}\cdot\frac{1}{\mathcal{N}^2}.
\end{gather*}
\item
\begin{gather*}
\frac{1}{E^4}\int_{\mathbb{T}^3}tr(H^4(x))dx\sim_\sigma\frac{13}{15}\cdot\frac{1}{\mathcal{N}^2};
\\
\frac{1}{E^4}\int_{\mathbb{T}^3}tr(H^2(x))^2dx\sim_\sigma\frac{7}{5}\cdot\frac{1}{\mathcal{N}^2}.
\end{gather*}
\item
\begin{equation*}
\frac{1}{E^3}\int_{\mathbb{T}^3}(DD^ttr(H^2))(x)dx\sim_\rho\frac{1}{\mathcal{N}^2}.
\end{equation*}
\item
\begin{equation*}
\frac{1}{E^2}\int_{\mathbb{T}^3}(rDHD^t)(x)dx\sim_\rho-\frac{1}{3}\cdot\frac{1}{\mathcal{N}^2}.
\end{equation*}
\item
\begin{equation*}
\frac{1}{E^3}\int_{\mathbb{T}^3}(DH^2D^t)(x)dx\sim_\rho\frac{1}{3}\cdot\frac{1}{\mathcal{N}^2}.
\end{equation*}
\item
\begin{equation*}
\frac{1}{E^3}\int_{\mathbb{T}^3}(DD^t)^3(x)dx\ll\frac{|\mathcal{C}(6)|}{\mathcal{N}^6}.
\end{equation*}
\item
\begin{equation*}
\frac{1}{E}\int_{\mathbb{T}^3}(r^4DD^t)(x)dx\ll\frac{|\mathcal{C}(6)|}{\mathcal{N}^6}.
\end{equation*}
\item
\begin{equation*}
\frac{1}{E^6}\int_{\mathbb{T}^3}tr(H^6(x))dx\ll\frac{|\mathcal{C}(6)|}{\mathcal{N}^6}.
\end{equation*}
\end{multicols}
\item
\begin{equation*}
\frac{1}{E^3}\int_{\mathbb{T}^3}(rDD^tDHD^t)(x)dx\ll\frac{|\mathcal{C}(6)|}{\mathcal{N}^6}.
\end{equation*}
\end{enumerate}
\end{small}
\end{lemma}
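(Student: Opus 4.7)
The unifying principle is the Parseval/orthogonality identity on $\T^3$: for any vectors $\mu_1,\dots,\mu_\ell\in\Z^3$,
\begin{equation*}
\int_{\T^3}e((\mu_1+\cdots+\mu_\ell)\cdot x)\,dx=\mathbf{1}\{\mu_1+\cdots+\mu_\ell=0\}.
\end{equation*}
Expanding $r$, $D_j$ and $H_{jk}$ from \eqref{r}, \eqref{D}, \eqref{H} into their Fourier series in terms of $\mathcal{E}=\mathcal{E}_m$ and multiplying out, each integrand under consideration becomes a polynomial combination of exponentials $e((\mu_1+\cdots+\mu_\ell)\cdot x)$ whose coefficients are products of inner products $\mu_i\cdot\mu_j$ (arising from contractions in $DD^t$, $tr(H^k)$, $DH^kD^t$, etc.), divided by a power of $\mathcal{N}$ and a power of $E=4\pi^2 m$. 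After integration, only the terms with $\sum_i\mu_i=0$ survive, so
\begin{equation*}
\int_{\T^3}(\text{integrand})\,dx=\frac{c}{E^k\mathcal{N}^\ell}\sum_{(\mu_1,\dots,\mu_\ell)\in\mathcal{C}(\ell)}P(\mu_1,\dots,\mu_\ell),
\end{equation*}
for some explicit polynomial $P$ in the inner products and some constant $c$. This rewrites each item of the lemma as an arithmetic sum over lattice point correlations.

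For items involving only $\ell=2$ correlations (parts 1 first half, 2(a), 4(a)), the identity $|\mathcal{C}(2)|=\mathcal{N}$ from \eqref{C2}, together with $\mu\cdot(-\mu)=-m$ and $\sum_\mu(\mu^{(j)})^2=m\mathcal{N}/3$, gives the stated closed forms immediately. For items reducing to $\ell=4$ correlations (parts 1 second half, 2(b), 3, 4(b), 6, 7, 8), I would apply the splitting \eqref{splitsums} and separate the three pairwise-cancellation contributions from the diagonal $\mathcal{D}''$ and non-degenerate $\mathcal{X}(4)$ contributions. On the diagonal, every summand is bounded by $O(m^{\deg P/2})$ and $|\mathcal{D}''|\ll\mathcal{N}$, producing the $O(1/\mathcal{N}^3)$ term in the $\sim_\rho$ error. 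On $\mathcal{X}(4)$, one has at most $|\mathcal{X}(4)|$ terms each of size $O(m^{\deg P/2})$, giving the $|\mathcal{X}(4)|/\mathcal{N}^4$ term. On each of the three symmetric pairings, substituting $\mu_3=-\mu_1$ or $\mu_2=-\mu_1$ etc.\ collapses the polynomial $P$ into a monomial in a single inner product $\mu_1\cdot\mu_2$, multiplied by an appropriate power of $m$; summing over $(\mu_1,\mu_2)\in\mathcal{E}^2$ produces $m^s\mathcal{N}^2\cdot B_k$ for the relevant $k\in\{1,2,3\}$, where $B_k$ is the moment \eqref{Bk}. Since $B_k=0$ for odd $k$ and $B_2=1/3$ by Lemma \ref{innerk}, these cases are completely elementary and the stated constants follow by arithmetic.

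The items requiring the sharper error $\sim_\sigma$ are (5)(a) and (5)(b), where the polynomial $P$ produced by $tr(H^4)$ or $(tr(H^2))^2$ contains, on the pairing $\mu_1=-\mu_3,\mu_2=-\mu_4$ (and its partner), the monomial $(\mu_1\cdot\mu_2)^4$. Summing this over $(\mu_1,\mu_2)\in\mathcal{E}^2$ yields $m^4\mathcal{N}^2\cdot B_4$, and invoking the equidistribution estimate \eqref{B4} of Lemma \ref{innerk} gives the constant $1/5$ together with the $O(m^{-1/28+o(1)}/\mathcal{N}^2)$ error. Combining the three pairings (two of which contribute via $B_2=1/3$) produces the advertised constants $13/15$ and $7/5$. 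This is the single step where anything beyond elementary algebra enters, and represents the main obstacle — the numerical constants must be tracked very carefully through the sign conventions $(2\pi i)^a(-4\pi^2)^b/E^k$ and the three distinct pairings in \eqref{splitsums}.

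Finally, items (9)--(12) concern polynomials of degree $6$ in the $\mu_i$, so after Parseval the sum ranges over $\mathcal{C}(6)$ and each summand is bounded trivially by $O(m^3)$; combined with the factor $1/(E^3\mathcal{N}^6)=1/(m^3\mathcal{N}^6)$ (up to constants), this immediately yields the stated bound $O(|\mathcal{C}(6)|/\mathcal{N}^6)$ without any need to dissect the structure of $\mathcal{C}(6)$. Throughout the computation I would use $E=4\pi^2 m$ and $|\mu|^2=m$ on $\mathcal{E}$ to cancel all factors of $\pi$ and $m$ cleanly against the prefactors in \eqref{D}, \eqref{H}; no further arithmetic input is needed beyond $B_2,B_3,B_4$ and the trivial bound $|\mathcal{D}''|\ll\mathcal{N}$.
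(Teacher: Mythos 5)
Your plan follows the paper's proof exactly: expand $r$, $D$, $H$ into Fourier series, integrate against orthogonality to reduce each item to a sum over $\mathcal{C}(\ell)$ ($\ell=2,4,6$), compute $\ell=2$ directly, split $\ell=4$ via \eqref{splitsums} into the three paired contributions (which reduce to the moments $B_1,B_2,B_3,B_4$ of Lemma \ref{innerk}, with only $B_4$ requiring the equidistribution input that produces the $\sim_\sigma$ rather than $\sim_\rho$ error) plus the $\mathcal{D}''$ and $\mathcal{X}(4)$ remainders, and bound $\ell=6$ trivially by $|\mathcal{C}(6)|$. The only substantive difference is one of labeling (the paper invokes Cauchy--Schwarz where you say "trivially," but these coincide since $|\mu_i\cdot\mu_j|\leq m$); the approach, decomposition, and arithmetic inputs are the same as the paper's.
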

%\noindent
%The proof of Lemma \ref{lemma5.4} is given in Section \ref{arith}.
\begin{proof}
%[Proof of Lemma \ref{lemma5.4}]
The various estimates are obtained with the following common strategy. Firstly, one rewrites the integrand using the expressions \eqref{r}, \eqref{D} and \eqref{H} for the covariance function and its (first and second order) derivatives. Next, the integral over the torus in taken, invoking the orthogonality relations of the exponentials:
\begin{equation}
\label{circlemethod}
\int_{\mathbb{T}^3}e(\mu\cdot x)dx=
\begin{cases}
1 \quad \mu=0
\\
0 \quad \mu\neq 0.
\end{cases}
\end{equation}
We are thus left with a summation over the set of $\ell$-correlations $\mathcal{C}(\ell)$, where $\ell=2,4
%,5
$ or $6$. The summands are certain products of inner products between two lattice points. The summations involving $2$-correlations are computed directly, and for $k=
%5,
6$ we need only an upper bound. The most delicate computations are for $4$-correlations, when we split the summation exploiting the structure of $\mathcal{C}(4)$ (see \eqref{splitsums}). This leads to computing $k$-th moments (for $k=1, 2, 3,$ or $4$) of the normalised inner product of two lattice points, applying Lemma \ref{innerk}.

We now present the details of the proof for some of the estimates of the present lemma; the remaining computations apply the same ideas (outlined above), and we will omit them here. We begin with part 1, first statement, which is an immediate consequence of \eqref{C2}:
\begin{equation*}
\int_{\mathbb{T}^3}r^2(x)dx=\mathcal{R}(2)=\frac{|\mathcal{C}(2)|}{\mathcal{N}^2}=\frac{1}{\mathcal{N}}.
\end{equation*}
The second statement of part 1 follows from the structure of $\mathcal{C}(4)$ \eqref{splitsums}:
\begin{equation*}
\int_{\mathbb{T}^3}r^4(x)dx=\mathcal{R}(4)=\frac{|\mathcal{C}(4)|}{\mathcal{N}^4}=\frac{3}{\mathcal{N}^2}+O\left(\frac{1}{\mathcal{N}^3}+\frac{|\mathcal{X}(4)|}{\mathcal{N}^4}\right).
\end{equation*}
Let us show part 2 of the present lemma, starting with the first statement. By \eqref{D}, we may rewrite the integrand as
\begin{equation}
\label{eqDDt}
DD^t=tr(D^tD)
%\\=-\frac{4\pi^2}{N^2}\cdot
%\left(
%\sum_{\mu,\mu'}e^{2\pi i\langle\mu+\mu',x\rangle}\mu_1\mu'_1+
%\sum_{\mu,\mu'}e^{2\pi i\langle\mu+\mu',x\rangle}\mu_2\mu'_2+
%\sum_{\mu,\mu'}e^{2\pi i\langle\mu+\mu',x\rangle}\mu_3\mu'_3
%\right)
=-\frac{4\pi^2}{\mathcal{N}^2}\cdot
\sum_{\mu_1,\mu_2}e([\mu_1+\mu_2]\cdot x)(\mu_1\cdot\mu_2).
\end{equation}
We take the integral over $\mathbb{T}^3$, bearing in mind \eqref{circlemethod}, and compute the resulting summation over the set of $2$-correlations, using \eqref{C2}:
\begin{equation*}
\int_{\mathbb{T}^3}(DD^t)(x)dx
=
-\frac{4\pi^2}{\mathcal{N}^2}\cdot
\sum_{\mathcal{C}(2)}(\mu_1\cdot\mu_2)
=
-\frac{4\pi^2}{\mathcal{N}^2}\cdot
\sum_{\mu_2}(-\mu_2\cdot\mu_2)
=
\frac{E}{\mathcal{N}},
\end{equation*}
as claimed. For the second statement of part 2, we begin by squaring \eqref{eqDDt}:
\begin{gather*}
(DD^t)^2=\frac{(4\pi^2)^2}{\mathcal{N}^4}\cdot
\sum_{\mathcal{E}_m^4}
e([\mu_1+\mu_2+\mu_3+\mu_4]\cdot x)
\cdot
(\mu_1\cdot\mu_2)
\cdot
(\mu_3\cdot\mu_4).
\end{gather*}
By \eqref{circlemethod},
\begin{equation}
\label{thesum}
\int_{\mathbb{T}^3}(DD^t)^2dx
=
\frac{(4\pi^2)^2}{\mathcal{N}^4}
\cdot\sum_{
%(\mu^{(1)},\mu^{(2)},\mu^{(3)},\mu^{(4)})\in
\mathcal{C}(4)}
(\mu_1\cdot\mu_2)
\cdot
(\mu_3\cdot\mu_4).
\end{equation}
To treat the resulting summation over $4$-correlations, we split it with \eqref{splitsums}. The contribution over diagonal and non-degenerate quadruples is bounded via Cauchy-Schwartz:
\begin{equation*}
\sum_{
\mathcal{D''}\dot{\cup}\mathcal{X}(4)}
(\mu_1\cdot\mu_2)
\cdot
(\mu_3\cdot\mu_4)
\leq
\sum_{
\mathcal{D''}\dot{\cup}\mathcal{X}(4)}
(\sqrt{m})^4
\ll
m^2\cdot
(\mathcal{N}+|\mathcal{X}(4)|).
\end{equation*}
%(we have also used the following uniform bound, $i=1,\dots,4$)
%\begin{equation*}
%|\mu^{(i)}|\leq\sqrt{m}\ll\sqrt{E}.
%\end{equation*}
There are three more contributions to the summation in \eqref{thesum}, that arise from symmetric (and non-diagonal) $4$-correlations; we directly compute the first of these contributions:
\begin{equation*}
\sum_{\substack{\mu_1=-\mu_2\\\mu_3=-\mu_4}}
(\mu_1\cdot\mu_2)
\cdot
(\mu_3\cdot\mu_4)
=
\sum_{\mu_2,\mu_4}
(-\mu_2\cdot\mu_2)
\cdot
(-\mu_4\cdot\mu_4)
%\\=
%\sum_{\mu_2,\mu_4}
%|\mu_2|^2|\mu_4|^2
=
m^2\mathcal{N}^2.
\end{equation*}
For the remaining two summations, we invoke Lemma \ref{innerk} with $k=2$:
\begin{equation*}
\sum_{\substack{\mu_1=-\mu_3\\\mu_2=-\mu_4}}
(\mu_1\cdot\mu_2)
\cdot
(\mu_3\cdot\mu_4)
=
\sum_{\substack{\mu_1=-\mu_4\\\mu_2=-\mu_3}}
(\mu_1\cdot\mu_2)
\cdot
(\mu_3\cdot\mu_4)
%=
%\sum_{\mu_3,\mu_4}
%(\mu_3,\mu_4\rangle^2
=
\sum_{\mu_4}\sum_{\mu_3}
(\mu_3\cdot\mu_4)^2
%\\=
%\sum_{\mu_4}
%\frac{m\mathcal{N}}{3}|\mu_4|^2
=
\frac{m^2\mathcal{N}^2}{3}.
\end{equation*}
%Lastly,
%\begin{multline*}
%\sum_{\substack{\mu_1=-\mu_3\\\mu_2=-\mu_4}}
%(\mu_1,\mu_4\rangle
%\cdot
%(\mu_2,\mu_3\rangle
%=
%\sum_{\mu_3,\mu_4}
%(-\mu_4,-\mu_3\rangle
%\cdot
%(\mu_3,\mu_4\rangle
%\\=
%\sum_{\mu_4}
%\mu_4
%\left(
%\sum_{\mu_3}
%{\mu_3}^t\mu_3
%\right)
%{\mu_4}^t
%=
%\frac{m\mathcal{N}}{3}
%\sum_{\mu_4}
%\mu_4I_3{\mu_4}^t
%=
%\frac{m^2\mathcal{N}^2}{3}
%\end{multline*}
%where we have used the consideration
%\begin{equation}
%\label{tech}
%\sum_{\mu\in\mathcal{E}}
%{\mu}^t\mu=\frac{m\mathcal{N}}{3}I_3,
%\end{equation}
%which follows from \cite{rudwi2} Lemma 2.3.
The various contributions yield
\begin{equation}
\label{contrib}
\sum_{\mathcal{C}(4)}
(\mu_1\cdot\mu_2)
\cdot
(\mu_3\cdot\mu_4)
\\=
\frac{5}{3}\cdot m^2\mathcal{N}^2
+
O
(m^2 \mathcal{N})
+
O
(m^2\cdot|\mathcal{X}(4)|).
\end{equation}
Inserting \eqref{contrib} into \eqref{thesum} we arrive at the second statement of part 2 of the present lemma. The proof of part 3 is very similar to that of part 2, second statement, except Lemma \ref{innerk} is applied with $k=1$.

To prove part 4, first statement, recall \eqref{H} and \eqref{circlemethod} to directly compute
\begin{equation*}
\int_{\mathbb{T}^3}tr(H^2(x))dx
=
\frac{(4\pi^2)^2}{\mathcal{N}^2}\cdot
\sum_{\mathcal{C}(2)}
tr(\mu_1^t\mu_1\mu_2^t\mu_2)
=
\frac{(4\pi^2)^2}{\mathcal{N}^2}\cdot
\sum_{\mu_1}
(\mu_1\cdot\mu_1)^2
=
\frac{E^2}{\mathcal{N}}.
\end{equation*}
For part 4, second statement, \eqref{r}, \eqref{H} and \eqref{circlemethod} imply
\begin{equation*}
\int_{\mathbb{T}^3}(r^2tr(H^2))(x)dx
=
\frac{(4\pi^2)^2}{\mathcal{N}^4}\cdot
\sum_{\mathcal{C}(4)}
tr({\mu_3}^t{\mu_3}{\mu_4}^t{\mu_4})
=
\frac{(4\pi^2)^2}{\mathcal{N}^4}\cdot
\sum_{\mathcal{C}(4)}
(\mu_3\cdot\mu_4)^2;
\end{equation*}
one now splits the sum and proceeds as in the proof of part 2.

Let us prove part 5 of the present lemma, first statement. By \eqref{H} and \eqref{circlemethod}, we have
%\begin{equation*}
%tr(H^4(x))=\frac{(4\pi^2)^4}{\mathcal{N}^4}\sum_{\mathcal{E}_m^4}
%e^{2\pi i(\mu^{(1)}+\mu^{(2)}+\mu^{(3)}+\mu^{(4)},x\rangle}
%\cdot tr({\mu^{(1)}}^t\mu^{(1)}{\mu^{(2)}}^t\mu^{(2)}{\mu^{(3)}}^t\mu^{(3)}{\mu^{(4)}}^t\mu^{(4)}).
%\end{equation*}
\begin{multline*}
\int_{\mathbb{T}^3}tr(H^4(x))dx=\frac{(4\pi^2)^4}{\mathcal{N}^4}
\sum_{\mathcal{C}(4)} tr({\mu_1}^t\mu_1{\mu_2}^t\mu_2{\mu_3}^t\mu_3{\mu_4}^t\mu_4)
\allowdisplaybreaks[4]
\\=
\frac{(4\pi^2)^4}{\mathcal{N}^4}
\left[
\sum_{\mu_2,\mu_4}tr({\mu_2}^t\mu_2{\mu_2}^t\mu_2{\mu_4}^t\mu_4{\mu_4}^t\mu_4)
+
\sum_{\mu_3,\mu_4}tr({\mu_3}^t\mu_3{\mu_4}^t\mu_4{\mu_3}^t\mu_3{\mu_4}^t\mu_4)
\right.
\\
\left.
+
\sum_{\mu_3,\mu_4}tr({\mu_4}^t\mu_4{\mu_3}^t\mu_3{\mu_3}^t\mu_3{\mu_4}^t\mu_4)
\right]
+E^4\cdot O\left(\frac{1}{\mathcal{N}^3}+\frac{|\mathcal{X}(4)|}{\mathcal{N}^4}\right)
\\=
\frac{(4\pi^2)^4}{\mathcal{N}^4}
\left[
\sum_{\mu_2,\mu_4}m^2(\mu_2\cdot\mu_4)^2
+
\sum_{\mu_3,\mu_4}(\mu_3\cdot\mu_4)^4
+
\sum_{\mu_3,\mu_4}m^2(\mu_3\cdot\mu_4)^2
\right]
\\+E^4\cdot O\left(\frac{1}{\mathcal{N}^3}+\frac{|\mathcal{X}(4)|}{\mathcal{N}^4}\right).
\end{multline*}
One computes the three summations on the RHS of the latter expression via Lemma \ref{innerk}, with $k=2, 4$:
\begin{gather*}
\int_{\mathbb{T}^3}tr(H^4(x))dx=
\frac{E^4}{\mathcal{N}^2}
\left[
\frac{1}{3}
+
\frac{1}{5}
+
\frac{1}{3}
+
O\left(\frac{1}{m^{1/28-o(1)}}\right)
+O\left(\frac{|\mathcal{X}(4)|}{\mathcal{N}^2}\right)
\right],
\end{gather*}
where we note the error term $E^4/\mathcal{N}^3$ is negligible by \eqref{totnumlp}. The second statement of part 5, and parts 6, 7 and 8 of the present lemma are all derived in a similar fashion, and we will omit these proofs here.

Let us prove part 12 of the present lemma, parts 9, 10 and 11 being similar. By \eqref{r}, \eqref{D}, \eqref{H} and \eqref{circlemethod},
\begin{equation*}
\int_{\mathbb{T}^3}(rDD^tDHD^t)(x)dx
=
-\frac{(4\pi^2)^3}{\mathcal{N}^6}
\sum_{\mathcal{C}(6)}
(\mu_1\cdot\mu_2)
\cdot
(\mu_3\cdot\mu_4)
\cdot
(\mu_4\cdot\mu_5)
\ll
\frac{E^3}{\mathcal{N}^6}\cdot |\mathcal{C}(6)|
\end{equation*}
(for summations over %$5$- and
$6$-correlations, an upper bound via the Cauchy-Schwartz inequality is sufficient for our purposes).
\end{proof}

\subsection{Proof of Lemma \ref{lemma4.6}}
\begin{proof}
[Proof of Lemma \ref{lemma4.6}] %assuming Lemma \ref{lemma5.4}]
To prove part 1, recall Lemma \ref{O(1)} (uniform boundedness of $X$) and write
\begin{equation*}
\int_{\mathbb{T}^3}tr X(x)dx
=
\int_{\mathbb{T}^3\setminus S}tr X(x)dx
+
O(meas \ S).
\end{equation*}
Recall the expression of $X$ \eqref{X}; one uses the approximation \eqref{tay2} on $\mathbb{T}^3\setminus S$, and Proposition \ref{singset} to bound the contribution of the singular set:
\begin{multline*}
\int_{\mathbb{T}^3}tr X(x)dx
=
-\frac{3}{E}
\left(
\int_{\mathbb{T}^3}DD^tdx
+
\int_{\mathbb{T}^3}r^2DD^tdx
\right)
\\+
O
\left(
\frac{1}{E}
\int_{\mathbb{T}^3}r^4DD^tdx
\right)
+
O\left(\frac{|\mathcal{C}(6)|}{\mathcal{N}^6}\right).
\end{multline*}
To compute the three integrals on the RHS of the latter expression, apply Lemma \ref{lemma5.4}, parts 2, 3 and 10. Here and elsewhere the error term $1/\mathcal{N}^3$ (arising from several of the estimates of Lemma \ref{lemma5.4}) is negligible compared to $|\mathcal{C}(6)|/\mathcal{N}^6$. Part 2 of the present lemma is derived in a similar way.

Let us show part 3 of the present lemma, parts 4, 7, 8 and 9 being similar. By Lemma \ref{O(1)}, \eqref{tay2} and Proposition \ref{singset},
\begin{equation*}
\int_{\mathbb{T}^3}tr(XY^2)(x)dx
=
-\frac{27}{E^3}
\left[
\int_{\mathbb{T}^3}tr(DH^2D^t)dx
+
O
\int_{\mathbb{T}^3}rDD^tDHD^tdx
\right]
+
O\left(\frac{|\mathcal{C}(6)|}{\mathcal{N}^6}\right).
\end{equation*}
Now Lemma \ref{lemma5.4}, parts 8 and 12, yields
\begin{equation*}
\int_{\mathbb{T}^3}tr(XY^2)(x)dx
=
\frac{1}{\mathcal{N}^2}
\left[
-9
+O\left(\frac{|\mathcal{X}(4)|}{\mathcal{N}^2} %+\frac{|S_5|}{\mathcal{N}^4}
+\frac{|\mathcal{C}(6)|}{\mathcal{N}^4}\right)
\right],
\end{equation*}
which concludes the proof of part 3 of the present lemma.

We now prove part 5, part 6 being similar. By Lemma \ref{O(1)} and Proposition \ref{singset}, we have
\begin{equation*}
\int_{\mathbb{T}^3}tr(Y^4)(x)dx
=
\frac{81}{E^4}
\int_{\mathbb{T}^3}tr(H^4)dx
+
O\left(\frac{|\mathcal{C}(6)|}{\mathcal{N}^6}\right).
\end{equation*}
Now Lemma \ref{lemma5.4}, part 5 yields
\begin{equation*}
\int_{\mathbb{T}^3}tr(Y^4)(x)dx
=
\frac{351}{5}\cdot\frac{1}{\mathcal{N}^2}
+O\left(\frac{1}{m^{1/28-o(1)}\cdot\mathcal{N}^2}+\frac{|\mathcal{X}(4)|}{\mathcal{N}^4}+\frac{|\mathcal{C}(6)|}{\mathcal{N}^6}\right),
\end{equation*}
hence the claim of part 5 of the present lemma.
%the term in $|S_5|$ being redundant by Lemma \ref{S5}.
Lastly, we show part 10, part 11 being similar. By Lemma \ref{O(1)} and Proposition \ref{singset}, we have
\begin{equation*}
\int_{\mathbb{T}^3}tr(X^3)(x)dx
=
-\frac{27}{E^3}
\int_{\mathbb{T}^3}(DD^t)^3(x)dx
+
O\left(\frac{|\mathcal{C}(6)|}{\mathcal{N}^6}\right)
\ll\frac{|\mathcal{C}(6)|}{\mathcal{N}^6},
\end{equation*}
where in the last step we applied Lemma \ref{lemma5.4}, part 9.
\end{proof}

%\section{Auxiliary lemmas}
%\section{Proof of Lemma \ref{lemma5.4}}
%\label{arith}

%\subsection{Proof of Lemma \ref{lemma5.4}}

\appendix
\section{Berry's method: proof of Lemma \ref{lemma5.1}}
\label{Berry's}
\noindent
In this section, we establish Lemma \ref{lemma5.1}: following \cite{berry2} and \cite{krkuwi}, we regard $\mathbb{E}[\|w_1\|\|w_2\|]$ (recall the notation in the statement of the lemma) as a function of the entries of the matrices $X$ \eqref{X} and $Y$ \eqref{Y}, and perform a Taylor expansion about $X=Y=0$. We employ Berry's method as opposed to computing the Taylor polynomial by brute force, which would result in a longer computation.
\begin{lemma}
\label{I+J lemma}
Let:
$w_1,w_2\in\mathbb{R}^3$, $(w_1,w_2)\sim N(0,\Omega)$ with
$
\Omega=I_6+
\begin{pmatrix}
X & Y \\ Y & X
\end{pmatrix}.
$
Then
\begin{equation}
\label{im1}
\mathbb{E}[\|w_1\|\|w_2\|]=
\frac{1}{2\pi}\iint_{\R^{2}_{+}}
(f(0,0)-f(t,0)-f(0,s)+f(t,s))
\frac{dtds}{(ts)^{\frac{3}{2}}}
\end{equation}
with
\begin{equation}
\label{im2}
f(t,s)=
\frac{1}{\sqrt{\det{(I_6+J(t,s))}}},
\end{equation}
where
\begin{equation}
\label{im3}
I_6+J=
\begin{pmatrix}
(1+t)I_3+tX & \sqrt{ts}Y \\ \sqrt{ts}Y & (1+s)I_3+sX
\end{pmatrix}
\end{equation}
is a perturbation of the identity matrix $I_6$.
\end{lemma}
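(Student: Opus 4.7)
The proof runs along the lines of what the authors call Berry's method: we first represent each Euclidean norm $\|w_i\|$ as a Laplace-type one-dimensional integral, and then evaluate the resulting multivariate Gaussian integral via its explicit moment generating function. The starting point is the elementary identity
\begin{equation*}
\sqrt{a}\;=\;\frac{1}{\sqrt{2\pi}}\int_0^{\infty}\frac{1-e^{-at/2}}{t^{3/2}}\,dt,\qquad a\geq 0,
\end{equation*}
which is verified by the substitution $u=at/2$ combined with the gamma-function evaluation $\int_0^{\infty}(1-e^{-u})u^{-3/2}du=2\sqrt{\pi}$.

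Applying this identity to $\|w_1\|$ and $\|w_2\|$ separately and multiplying yields
\begin{equation*}
\|w_1\|\|w_2\|=\frac{1}{2\pi}\iint_{\R_+^2}\frac{\bigl(1-e^{-t\|w_1\|^2/2}\bigr)\bigl(1-e^{-s\|w_2\|^2/2}\bigr)}{(ts)^{3/2}}\,dt\,ds,
\end{equation*}
and the numerator expands into four terms $1-e^{-t\|w_1\|^2/2}-e^{-s\|w_2\|^2/2}+e^{-t\|w_1\|^2/2-s\|w_2\|^2/2}$. After interchanging expectation with the $(t,s)$-integral (justified below), I would evaluate each non-trivial expectation with the standard Gaussian-quadratic-form formula: for $w\sim N(0,\Omega)$ on $\R^6$ and any positive semidefinite symmetric matrix $A$,
\begin{equation*}
\mathbb{E}\!\left[e^{-\tfrac{1}{2}w^{T}\!Aw}\right]=\det(I_6+A\Omega)^{-1/2}.
\end{equation*}
Setting $A=\operatorname{diag}(tI_3,sI_3)$ produces precisely the required quadratic exponent. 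To recover the symmetric matrix $I_6+J(t,s)$ displayed in the statement, write $A=D^2$ with $D=\operatorname{diag}(\sqrt{t}\,I_3,\sqrt{s}\,I_3)$ and apply the similarity identity $\det(I+D^2\Omega)=\det(I+D\Omega D)$; a direct block computation shows $D\Omega D$ coincides with the matrix $J(t,s)$, so the expectation is exactly $f(t,s)$. The same calculation at $s=0$ or $t=0$ returns $f(t,0)$ or $f(0,s)$, and $f(0,0)=1$ accounts for the constant term. Reassembling gives \eqref{im1}.

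The only technical point is the integrability of the integrand against $(ts)^{-3/2}$, which simultaneously justifies the use of Fubini in the interchange above. Near the origin, smoothness of $f$ gives
\begin{equation*}
f(0,0)-f(t,0)-f(0,s)+f(t,s)=\int_0^{t}\!\!\int_0^{s}\partial_u\partial_v f(u,v)\,du\,dv=O(ts),
\end{equation*}
so the integrand has only an integrable $(ts)^{-1/2}$ singularity at the corner. For large $(t,s)$ the block determinant grows polynomially, showing that $f(t,s)\ll(ts)^{-3/2}$, $f(t,0)\ll t^{-3/2}$, $f(0,s)\ll s^{-3/2}$, which produces an integrable tail when the four terms are combined. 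The main obstacle, such as it is, is bookkeeping the constants in the $\sqrt{a}$-identity: one must pick the $at/2$ scaling (not $at$) so that the prefactor comes out to be $1/(2\pi)$ and the off-diagonal block of $J$ emerges as $\sqrt{ts}\,Y$ rather than $tY$ or $sY$; any other normalisation would yield the right determinant but the wrong matrix structure.
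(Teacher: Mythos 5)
Your proof is correct and follows essentially the same route as the paper: both start from the Laplace-type representation $\|w_i\|=(2\pi)^{-1/2}\int_0^\infty(1-e^{-t\|w_i\|^2/2})t^{-3/2}dt$, expand the product of the two factors into four terms, and evaluate the surviving Gaussian expectation as a determinant, with the Sylvester-type identity $\det(I+D^2\Omega)=\det(I+D\Omega D)$ recovering the symmetric block matrix $I_6+J(t,s)$. The only cosmetic differences are that you quote the moment-generating-function formula and then conjugate, whereas the paper carries out the Gaussian integral explicitly, and you spell out the Fubini/integrability justification which the paper leaves implicit.
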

\begin{proof}
We begin with \cite[(24)]{berry2}:
\begin{equation*}
\|w_i\|=
\frac{1}{\sqrt{2\pi}} \int_{\R_{+}}
\left(1-e^{-t\|w_i\|^2/2}\right)\frac{dt}{t^{\frac{3}{2}}},
\qquad
i=1,2.
\end{equation*}
The LHS of \eqref{im1} becomes
\begin{multline*}
\mathbb{E}[\|w_1\|\|w_2\|]=
\frac{1}{2\pi}\iint_{\R^{2}_{+}} \mathbb{E}\left[(1-e^{-t\|w_i\|^2/2})(1-e^{-s\|w_i\|^2/2})\right]
\frac{dtds}{(ts)^{\frac{3}{2}}}
\\=
\frac{1}{2\pi}\iint_{\R^{2}_{+}}
\left(
\mathbb{E}[1]
+
\mathbb{E}\left[-e^{-t\|w_1\|^2/2}\right]
+
\mathbb{E}\left[-e^{-s\|w_2\|^2/2}\right]
+
\mathbb{E}\left[e^{-(t\|w_1\|^2+s\|w_2\|^2)/2}\right]
\right)
\frac{dtds}{(ts)^{\frac{3}{2}}}.
\end{multline*}
Setting
\begin{equation*}
f(t,s)=f_{X,Y}(t,s):=\mathbb{E}[\exp(-(t\|w_1\|^2+s\|w_2\|^2)/2)],
\end{equation*}
it remains to show that $f(t,s)$ may be rewritten as in \eqref{im2}. By definition of expectation,
\begin{multline*}
f(t,s)=
\int_{\R^{3}\times\R^{3}}
\frac{1}{\sqrt{(2\pi)^6}}\cdot\frac{1}{\sqrt{\det{\Omega}}}
\cdot
\\
\cdot
\exp(-(t\|w_1\|^2+s\|w_2\|^2)/2)
\cdot
%\exp\left(-\frac{1}{2}
%\begin{pmatrix}
%w_1 & w_2
%\end{pmatrix}
%\begin{pmatrix}
%tI_3 &  \\  & sI_3
%\end{pmatrix}
%\begin{pmatrix}
%w_1 \\ w_2
%\end{pmatrix}
%\right)
\exp\left(-\frac{1}{2}
\begin{pmatrix}
w_1 & w_2
\end{pmatrix}
\Omega^{-1}
\begin{pmatrix}
w_1 \\ w_2
\end{pmatrix}
\right)
dw_1dw_2
\\=
\frac{1}{\sqrt{(2\pi)^6\det{\Omega}}}
\int_{\R^{3}\times\R^{3}}
\exp\left(-\frac{1}{2}
\begin{pmatrix}
w_1 & w_2
\end{pmatrix}
\left[\begin{pmatrix}
tI_3 & 0 \\ 0 & sI_3
\end{pmatrix}
+
\Omega^{-1}
\right]
\begin{pmatrix}
w_1 \\ w_2
\end{pmatrix}
\right)
dw_1dw_2
\\=
\frac{1}{\sqrt{\det{\Omega}}}
\cdot\sqrt{\det{
\left[\begin{pmatrix}
tI_3 & 0 \\ 0 & sI_3
\end{pmatrix}
+
\Omega^{-1}\right]^{-1}
}}
=
\frac{1}{\sqrt{\det{(I_6+J(t,s))}}},
\end{multline*}
with $I_6+J(t,s)$ as in \eqref{im3}.
\end{proof}
%\noindent
We will need the following expansions for a square matrix $P$, as $P\to 0$ entry-wise:
%: $a_{ij}\to 0$ as $m\to\infty$
%Then we have :
%\begin{lemma}
\begin{equation}
\label{detto-1}
(I+P)^{-1}=I-P+O(P^2)
\end{equation}
%\end{lemma}
%\begin{proof}
%Say $B\to O$. Then
%\begin{equation*}
%(I-B)\sum_{k=0}^{\infty}B^i=\sum_{k=0}^{\infty}B^i-\sum_{k=0}^{\infty}B^{i+1}=B^0-B^{\infty}=I-O=I.
%\end{equation*}
%so we get matrix equality
%\begin{equation*}
%(I-B)^{-1}=\sum_{k=0}^{\infty}B^i,
%\end{equation*}
%whence
%\begin{equation*}
%(I+A)^{-1}=\sum_{k=0}^{\infty}(-A)^i=I-A+O(A^2)
%\end{equation*}
%as $A\to O$.
%\end{proof}
and
%\begin{lemma}
\begin{equation}
\label{detto-1/2}
(\det(I+P))^{-\frac{1}{2}}=1-\frac{1}{2}trP+\frac{1}{4}tr(P^2)+\frac{1}{8}(trP)^2+O\left(\max_{i,j}(P^3)_{ij}\right).
\end{equation}
%\end{lemma}

\begin{proof}[Proof of Lemma \ref{lemma5.1}]
By Lemma \ref{I+J lemma}, we get the expression \eqref{im1}, and require the Taylor expansion of $f_{X,Y}(t,s)=\det(I_6+J)^{-\frac{1}{2}}$ around $X=Y=0$. By \eqref{im3} and the formula for the determinant of a block matrix:
\begin{equation*}
\det(I_6+J)
=
\det((1+t)I_3+tX)
\cdot
\det[(1+s)I_3+sX-\sqrt{ts}Y((1+t)I_3+tX)^{-1}\sqrt{ts}Y],
\end{equation*}
hence
\begin{multline}
\label{2factors}
f_{X,Y}(t,s)=\det(I_6+J)^{-\frac{1}{2}}
\\=\det((1+t)I_3+tX)^{-\frac{1}{2}}
\cdot
\det[(1+s)I_3+sX-tsY((1+t)I_3+tX)^{-1}Y]^{-\frac{1}{2}}.
\end{multline}
Bearing in mind that $I_3$ and $X$ are $3\times 3$ matrices, we have
\begin{equation*}
\det((1+t)I_3+tX)
%=\det[(1+t)(I+\frac{t}{1+t}X)]
=(1+t)^3\cdot\det\left(I_3+\frac{t}{1+t}X\right)
\end{equation*}
and thus rewrite the first factor on the RHS of \eqref{2factors} as
\begin{equation}
\label{fac1}
%\det((1+t)I+tX)^{-\frac{1}{2}}=
\frac{1}{(1+t)^{\frac{3}{2}}}\det\left(I_3+\frac{t}{1+t}X\right)^{-\frac{1}{2}}.
\end{equation}
Since
\begin{multline*}
\det[(1+s)I_3+sX-tsY((1+t)I_3+tX)^{-1}Y]
%\\=
%\det\{
%(1+s)
%[I+\frac{s}{1+s}X-\frac{ts}{1+s}Y((1+t)I+tX)^{-1}Y]
\}
\\=
\det\left\{
(1+s)
\left[I_3+\frac{s}{1+s}X-\frac{ts}{(1+t)(1+s)}Y\left(I_3+\frac{t}{1+t}X\right)^{-1}Y\right]
\right\},
\end{multline*}
the second factor on the RHS of \eqref{2factors} equals
\begin{equation*}
\frac{1}{(1+s)^{\frac{3}{2}}}
\det\left[I_3+\frac{s}{1+s}X-\frac{ts}{(1+t)(1+s)}Y\left(I_3+\frac{t}{1+t}X\right)^{-1}Y\right]^{-\frac{1}{2}}
;
\end{equation*}
applying \eqref{detto-1} with $P=\frac{t}{1+t}X$, we further rewrite the second factor on the RHS of \eqref{2factors} as:
\begin{multline}
\label{fac2}
\frac{1}{(1+s)^{\frac{3}{2}}}\cdot\det\left[I+\frac{s}{1+s}X-\frac{ts}{(1+t)(1+s)}Y^2
\right.\\\left.
+\frac{t^2s}{(1+t)^2(1+s)}YXY+O(YX^2Y)\right]^{-\frac{1}{2}}.
\end{multline}
Next, we apply \eqref{detto-1/2} to both \eqref{fac1} and \eqref{fac2}, with $P=\frac{t}{1+t}X$ and
\begin{equation*}
P=\frac{s}{1+s}X-\frac{ts}{(1+t)(1+s)}Y^2+\frac{t^2s}{(1+t)^2(1+s)}YXY+O(YX^2Y)
\end{equation*}
respectively. The above computations on the two factors of \eqref{2factors} yield
\begin{multline}
\label{fexpand}
\allowdisplaybreaks[4]
f_{X,Y}(t,s)=\frac{1}{(1+t)^{\frac{3}{2}}(1+s)^{\frac{3}{2}}}
\cdot\left[
1
-
\frac{1}{2}
\left(
\frac{t}{1+t}
+
\frac{s}{1+s}
\right)
tr(X)
\right.
\\
\left.
+
\frac{1}{2}
\cdot\frac{ts}{(1+t)(1+s)}
tr(Y^2)
-
\frac{1}{2}
\cdot\frac{ts}{(1+t)(1+s)}
\left(
\frac{t}{1+t}
+
\frac{s}{1+s}
\right)
tr(XY^2)
\right.
\\
\left.
+
\left(
\frac{3}{8}
\frac{t^2}{(1+t)^2}
+
\frac{3}{8}
\frac{s^2}{(1+s)^2}
+
\frac{1}{4}
\frac{ts}{(1+t)(1+s)}
\right)
tr(X^2)
\right.
\\
\left.
+
\frac{1}{4}\cdot
\frac{t^2s^2}{(1+t)^2(1+s)^2}
tr(Y^4)+
\frac{1}{8}
\frac{t^2s^2}{(1+t)^2(1+s)^2}
tr(Y^2)^2
\right.
\\
\left.
-
\frac{1}{4}\cdot\frac{ts}{(1+t)(1+s)}
\left(
\frac{t}{1+t}
+
\frac{s}{1+s}
\right)
tr(X)tr(Y^2)
\right]
+
O(tr(X^3)+tr(Y^6)),
\end{multline}
where we have used the assumption $rank(X)=1$ so that $tr(X)^2=tr(X^2)$. The integrand in \eqref{im1} is
\begin{equation*}
h_{X,Y}(t,s):=f(0,0)-f(t,0)-f(0,s)+f(t,s);
\end{equation*}
to compute the Taylor polynomial for $h$ around $X=Y=0$, first note that, except for the terms in $1,tr(X),tr(X^2)$, the various terms in the expansion of $h$ are the same as those in the expansion of $f$: this is because each term in \eqref{fexpand}, save for those in $1,tr(X),tr(X^2)$, vanishes when $t=0$ or $s=0$. Next, we directly compute the terms in $1,tr(X),tr(X^2)$ of the expansion of $h$ to be respectively
\begin{equation*}
%1
%-
%\frac{1}{(1+t)^{\frac{3}{2}}}
%-
%\frac{1}{(1+s)^{\frac{3}{2}}}
%+
%\frac{1}{(1+t)^{\frac{3}{2}}(1+s)^{\frac{3}{2}}}
%=
\left(1-\frac{1}{(1+t)^{3/2}}\right)
\cdot
\left(1-\frac{1}{(1+s)^{3/2}}\right),
\end{equation*}

\begin{equation*}
%-\frac{1}{2}
%\left[
%0
%-
%\frac{1}{(1+t)^{\frac{3}{2}}}\frac{t}{1+t}
%-
%\frac{1}{(1+s)^{\frac{3}{2}}}\frac{s}{1+s}
%+
%\frac{1}{(1+t)^{\frac{3}{2}}(1+s)^{\frac{3}{2}}}
%\left(\frac{t}{1+t}+\frac{s}{1+s}\right)
%\right]
%\\=
\frac{1}{2}
\left[
\frac{t}{(1+t)^{5/2}}
\left(1-\frac{1}{(1+s)^{3/2}}\right)
+
\frac{s}{(1+s)^{5/2}}
\left(1-\frac{1}{(1+t)^{3/2}}\right)
\right],
\end{equation*}
and
\begin{equation*}
%0
%-
%\left(
%\frac{3}{8}
%\frac{t^2}{(1+t)^{\frac{7}{2}}}
%\right)
%-
%\left(
%\frac{3}{8}
%\frac{s^2}{(1+s)^{\frac{7}{2}}}
%\right)
%\\+
%\left(
%\frac{3}{8}
%\frac{t^2}{(1+t)^{\frac{7}{2}}(1+s)^{\frac{3}{2}}}
%+
%\frac{3}{8}
%\frac{s^2}{(1+t)^{\frac{3}{2}}(1+s)^{\frac{7}{2}}}
%+
%\frac{1}{4}
%\frac{ts}{(1+t)^{\frac{5}{2}}(1+s)^{\frac{5}{2}}}
%\right)
%\\=
-
\frac{3}{8}
\frac{t^2}{(1+t)^{7/2}}
\left(1-\frac{1}{(1+s)^{3/2}}\right)
-
\frac{3}{8}
\frac{s^2}{(1+s)^{7/2}}
\left(1-\frac{1}{(1+t)^{3/2}}\right)
+
\frac{1}{4}
\frac{ts}{(1+t)^{5/2}(1+s)^{5/2}}.
\end{equation*}
To perform the integration
\begin{equation}
\mathbb{E}[\|w_1\|\|w_2\|]=
\frac{1}{2\pi}\iint_{\R^{2}_{+}}
h(t,s)
\frac{dtds}{(ts)^{\frac{3}{2}}}
\end{equation}
term-wise, we need to improve the error term $O(tr(X^3)+tr(Y^6))$ in the expansion of $h$ so that it depends on $t$ and $s$, as
\begin{equation*}
\iint_{\R^{2}_{+}}
\frac{dtds}{(ts)^{\frac{3}{2}}}
\end{equation*}
is divergent at the origin. To do this, we note that, for all $X$ and $Y$, $h$ vanishes when $t=0$ or $s=0$; hence, for $t,s\geq 0$, we may write
\begin{equation*}
h_{X,Y}(t,s)=O_{X,Y}(ts).
\end{equation*}
We may then improve the error term in the expansion of $h$ to
\begin{equation*}
O\left(\min(t,1)\cdot\min(s,1)\cdot(tr(X^3)+tr(Y^6))\right).
\end{equation*}
Therefore,
\begin{align}
\label{hexpand}
\allowdisplaybreaks[4]
h^{X,Y}(t,s)&=
\left(1-\frac{1}{(1+t)^{3/2}}\right)
\cdot
\left(1-\frac{1}{(1+s)^{3/2}}\right)
\\
\notag
&+
\frac{1}{2}
\left[
\frac{t}{(1+t)^{5/2}}
\left(1-\frac{1}{(1+s)^{3/2}}\right)
+
\frac{s}{(1+s)^{5/2}}
\left(1-\frac{1}{(1+t)^{3/2}}\right)
\right]
tr(X)
\\
\notag
&+
\frac{1}{2}
\frac{t}{(1+t)^{5/2}}
\frac{s}{(1+s)^{5/2}}
tr(Y^2)
-
\frac{1}{2}
\left(
\frac{t^2}{(1+t)^{7/2}}
\frac{s}{(1+s)^{5/2}}
\right.
\\
\notag
&
\left.
+
\frac{t}{(1+t)^{5/2}}
\frac{s^2}{(1+s)^{7/2}}
\right)
tr(XY^2)
+
\left[
-
\frac{3}{8}
\frac{t^2}{(1+t)^{7/2}}
\left(1-\frac{1}{(1+s)^{3/2}}\right)
\right.
\\
\notag
&
\left.
-
\frac{3}{8}
\frac{s^2}{(1+s)^{7/2}}
\left(1-\frac{1}{(1+t)^{3/2}}\right)
+
\frac{1}{4}
\frac{ts}{(1+t)^{5/2}(1+s)^{5/2}}
\right]
tr(X^2)
\\
\notag
&+
\frac{1}{4}
\frac{t^2s^2}{(1+t)^{7/2}(1+s)^{7/2}}
tr(Y^4)
+
\frac{1}{8}
\frac{t^2s^2}{(1+t)^{7/2}(1+s)^{7/2}}
tr(Y^2)^2
\\
\notag
&
-
\frac{1}{4}
\left(
\frac{ts^2}{(1+t)^{5/2}(1+s)^{7/2}}
+
\frac{t^2s}{(1+t)^{7/2}(1+s)^{5/2}}
\right)
tr(X)tr(Y^2)
\\
\notag
&
+
O\left(\min(t,1)\cdot\min(s,1)\cdot(tr(X^3)+tr(Y^6))\right).
\end{align}
Lastly, we insert \eqref{hexpand} into \eqref{im1}, and compute the integrals
\begin{align*}
\int_{0}^{\infty}
\left(1-\frac{1}{(1+t)^{3/2}}\right)
\frac{dt}{t^{3/2}}=4,
&&
\int_{0}^{\infty}
\frac{dt}{(1+t)^{5/2}\sqrt{t}}=\frac{4}{3},
\\
\int_{0}^{\infty}
\frac{\sqrt{t} \ dt}{(1+t)^{7/2}}=\frac{4}{15},
&&
\int_{0}^{\infty}
\min(t,1)
\frac{dt}{t^{3/2}}=4,
\end{align*}
to obtain the statement of the present lemma.
\end{proof}

\section{The incidence bound}\label{incidencesection}
In this section we briefly explain how one can modify the proof of \cite[Theorem 1.2]{Zar} to obtain Theorem \ref{genipv}. As in \cite{Zar} the result will follow from a more general statement. First we recall the notions of degree and dimension of a real algebraic variety since they are key features of the proof of Theorem \ref{genipv}.\\
{\bf The degree and dimension of a real variety.} Let $V \subset \R^d$ be a real algebraic variety. Letting $I(V)$ denote the ideal of polynomials vanishing on $V$, we define $\dim(V)$ to be the Krull dimension of the quotient ring $\R^d/ I(V)$.\\
Let $V^{*}$ denote the complexification of $V$ (i.e. the Zariski closure of $V$, viewed as a subset of $\C^d$). As discussed in \cite[Section 4.1]{Zar}, the notion of degree is well-defined for complex varieties, so we may take $\deg(V)$ to be the degree of the complex variety $V^{*}$. One has the following relationship between the \textsl{complexity} and degree of a complex variety $W$ (see \cite[Lemmas 4.2 and 4.3]{ST}): 
\begin{itemize}
\item Any irreducible variety $W \subset \C^d$ of degree at most $D$ can be expressed as the zero set $\left\{ \underline{z} \in \C^d: g_i(\underline{z})=0 \ \forall i \leq r  \right\}$ where each polynomial $g_i \in \C[z_1,...,z_d]$ has degree at most $D$ and  $r=O_{d,D}(1)$. 
\item Suppose $W=\left\{ \underline{z} \in \C^d: g_i(\underline{z})=0 \ \forall i \leq r  \right\}$ is cut out by polynomials of degree at most $D$. Then $W$ can be decomposed into $O_{r,d,D}(1)$ irreducible varieties each having degree $O_{r,d,D}(1)$.
\end{itemize}
Given a polynomial $f$ we will denote by $Z(f)$ its zero set.

\begin{thm}[{\cite[Theorem 6.4]{Zar}}, quantitative in $s,t$]
\label{genipv2}
Let $\mathcal{P} \subset \R^d$ be a set of $k$ points and $\mathcal{V} $ a collection of $n$ algebraic varieties of bounded degree \footnote{Here we are assuming there is a constant $C$ such that each $V \in \mathcal{V}$ can be written as an algebraic set $\left\{ \underline{x} \in \R^d: p_i(\underline{x})=0 \ \forall i \leq r  \right\}$ where $r \leq C$ and each $p_i \in \R[x_1,...,x_d]$ has degree at most $C$.    }  in $\R^d$. Suppose the incidence graph of $\mathcal{P}\times \mathcal{V} $ is $K_{s,t}$-free and that $\mathcal{P}$ is contained in some irreducible variety $X$ of dimension $e$ and degree $D$. Lastly, suppose that no variety $V \in \mathcal{V}$ contains $X$. Then for any $\varepsilon >0$ there are positive constants $c_1(e)=c_1(\varepsilon,d,D,e)$ and $c_2(e)=c_2(\varepsilon,d,D,e)$  so that
\begin{equation}\label{gentinc2}
I(\mathcal{P},\mathcal{V} )\leq s t \left(  c_1(e) k^{ \frac{s(e-1)}{es-1}+\varepsilon  } \cdot n^{\frac{e(s-1) }{es-1}  } + c_2(e)(k+n) \right).
\end{equation}
\end{thm}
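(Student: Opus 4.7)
The plan is to retrace the polynomial partitioning induction of Fox--Pach--Sheffer--Suk--Zahl \cite{Zar}, replacing each intermediate estimate with one whose constant depends polynomially (indeed, linearly) on $s$ and $t$. We induct on $e=\dim X$. For the base case $e=0$ one has $|\mathcal{P}|\leq D$, so the bound is trivial upon choosing $c_2(0)\geq D$. For the inductive step, fix a partitioning degree $D'\geq 1$ and apply the variety version of polynomial partitioning (\cite[Theorem 4.4]{Zar}) to produce a polynomial $P$ of degree $O_d(D')$, not vanishing on $X$, whose complement in $X$ splits into $M=O_{d,D}((D')^e)$ connected open cells $\Omega_i$ with $|\mathcal{P}\cap\Omega_i|\ll k/M$. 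Writing $\mathcal{V}_i:=\{V\in\mathcal{V}: V\cap\Omega_i\neq\emptyset\}$, the hypothesis that no $V$ contains $X$ forces $V\cap X$ to have dimension $\leq e-1$, and a Milnor--Thom estimate gives $\sum_i |\mathcal{V}_i|\ll n(D')^{e-1}$.

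For the cellular contribution, I would use a loose K\H{o}v\'ari--S\'os--Tur\'an estimate: any $K_{s,t}$-free bipartite graph on parts of sizes $a,b$ has at most $O(st(a^{1-1/s}b+a+b))$ edges (weaker than the sharp $O(t^{1/s}ab^{1-1/s}+sa)$ bound, but with the desired linear dependence on $st$). Summing over cells yields $\sum_i I(\mathcal{P}\cap\Omega_i,\mathcal{V}_i)\ll st\cdot k^{1-1/s}n(D')^{e/s-1}$, plus trivial terms absorbed into $st(k+n)$. For the boundary contribution $I(\mathcal{P}\cap Z(P),\mathcal{V})$, decompose $X\cap Z(P)$ into $O_{d,D,D'}(1)$ irreducible components $X_j'$ of dimension $\leq e-1$ and bounded degree. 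Varieties $V$ that contain some component $X_j'$ with $|\mathcal{P}\cap X_j'|\geq s$ are limited by $K_{s,t}$-freeness to at most $t-1$ per component, contributing $\ll st(k+n)$ in total; varieties that do not contain $X_j'$ are handled by applying the inductive hypothesis to the restricted incidence problem on $X_j'$, where the $K_{s,t}$-free property is inherited.

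Choosing $D'$ as a suitable small power of $k$ and $n$ balances the cellular and boundary contributions and produces the claimed exponents $s(e-1)/(es-1)$ and $e(s-1)/(es-1)$. The $\varepsilon$-slack in the exponent of $k$ arises from absorbing the $(D')^{o(1)}$ factors that accumulate through the recursion on $e$. The main obstacle will be the bookkeeping: one must verify that the constants arising from Milnor--Thom, polynomial partitioning, and the decomposition of $X\cap Z(P)$ combine so as to preserve a linear dependence on $st$. The key observation is that $t$ enters only through the single application of K\H{o}v\'ari--S\'os--Tur\'an inside cells, while $s$ appears in the exponents and is handled via the $\varepsilon$ slack, so the recursion does not compound these parameter dependencies.
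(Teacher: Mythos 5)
Your ``loose'' K\H{o}v\'ari--S\'os--Tur\'an estimate is not a valid theorem, and this is a genuine gap rather than a harmless relaxation. For a bipartite graph with parts of sizes $a$ and $b$ that avoids $K_{s,t}$ with the $s$-tuples on the $a$-side and the $t$-tuples on the $b$-side, double-counting pairs of the form (an $s$-subset of the $a$-part, a common neighbour in the $b$-part) yields the bound $|E|\leq (t-1)^{1/s}\,a\,b^{1-1/s}+(s-1)\,b$, placing the sub-linear exponent on $b$, not on $a$. The form you propose, $O\bigl(st\bigl(a^{1-1/s}b+a+b\bigr)\bigr)$, puts the exponent on the wrong part; when $a\gg b$ it is not a weakening of the genuine estimate but strictly \emph{stronger}, and in fact false. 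Already for $s=2$, $t=3$, a probabilistic deletion construction with $a=N$ points and $b=N^{1/2}$ ``varieties'' gives a $K_{2,4}$-free bipartite graph with $\Omega(N^{8/7})$ edges, exceeding $O\bigl(st\,(a^{1/2}b+a+b)\bigr)=O(N)$ by an unbounded factor. Consequently your cell-by-cell estimate $\sum_i I(\mathcal{P}\cap\Omega_i,\mathcal{V}_i)\ll st\cdot k^{1-1/s}n(D')^{e/s-1}$ does not follow; using the correct weakening $|E|\leq st\bigl(ab^{1-1/s}+b\bigr)$ instead changes that exponent to $st\cdot kn^{1-1/s}(D')^{-(1-1/s)}$, and the balance of $D'$ against the boundary must be redone from scratch.

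Separately, your overall architecture differs from the paper's in a way that masks where the $st$-dependence is actually controlled. The paper's proof (following Fox--Pach--Sheffer--Suk--Zahl) is a two-step induction on both $e$ and $k+n$: the cells are handled by the $k+n$ induction, not by an in-cell K\H{o}v\'ari--S\'os--Tur\'an application, and the $K_{s,t}$-free hypothesis is invoked \emph{directly} in bounding the class $\mathcal{I}_2$, where each irreducible component of $X_f=X\cap Z(f)$ either contains at most $s-1$ points of $\mathcal{P}$ or is contained in at most $t-1$ members of $\mathcal{V}$, giving $|\mathcal{I}_2|\leq lsn+tk_f$. K\H{o}v\'ari--S\'os--Tur\'an is used exactly once, at the start, to reduce to the regime $n\leq k^s$ (where $t^{1/s}kn^{1-1/s}+sn\leq(t+s)n$), and the $\varepsilon$-slack arises from an $r^{-\varepsilon}$ gain in the $\mathcal{I}_3$ estimate driven by the $k+n$ induction, not from accumulated $(D')^{o(1)}$ factors across the dimension recursion. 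Your claim that ``$t$ enters only through the single application of K\H{o}v\'ari--S\'os--Tur\'an inside cells'' therefore misidentifies the mechanism; you would need to re-examine how $t$ propagates through both the boundary decomposition and the (absent in your sketch) treatment of cells with few points.
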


Observe that Theorem \ref{genipv} follows immediately, taking $X=\R^3$ and $d=3$. The proof of Theorem \ref{genipv2} is carried out exactly as in \cite[Theorems 4.3 and 6.4]{Zar} except that one requires a quantitative version of the classical K\H{o}v\'ari-S\'os-Tur\'an Theorem. 

\begin{thm}[\cite{H}]\label{KST}
Let $G=(V_1,V_2,E)$ be a bipartite graph with $|V_1|=k$, $|V_2|=n$ and suppose that $G$ does not contain a copy of $K_{s,t}$. Then $|E|\leq (t-1)^{1/s} k n^{1-1/s} + (s-1)n$.
\end{thm}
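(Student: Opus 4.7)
The plan is to prove Theorem \ref{KST} by the classical double-counting argument of K\H{o}v\'ari, S\'os and Tur\'an, being careful to track constants so as to obtain the quantitative form $(t-1)^{1/s} k n^{1-1/s} + (s-1) n$.

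The key quantity will be
\[
N := \#\bigl\{(S, v) : S \subseteq V_1,\ |S| = s,\ v \in V_2,\ v \text{ adjacent to every vertex of } S\bigr\}.
\]
First I would bound $N$ from above using the $K_{s,t}$-free hypothesis: for each fixed $s$-subset $S \subseteq V_1$, the number of $v \in V_2$ adjacent to all vertices of $S$ is at most $t-1$, for otherwise one would find an induced $K_{s,t}$. Hence $N \leq (t-1) \binom{k}{s}$.

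Next I would bound $N$ from below by classifying pairs according to the vertex $v \in V_2$. Writing $d(v)$ for the degree of $v$, the number of $s$-subsets of its neighborhood is $\binom{d(v)}{s}$, so $N = \sum_{v \in V_2} \binom{d(v)}{s}$. Since $\binom{x}{s}$ (extended to real $x \geq s-1$ via the falling factorial $x(x-1)\cdots(x-s+1)/s!$ and set to $0$ for $x < s-1$) is convex, Jensen's inequality gives
\[
\sum_{v \in V_2} \binom{d(v)}{s} \geq n \binom{|E|/n}{s}.
\]
Combining the two bounds yields $n \binom{|E|/n}{s} \leq (t-1) \binom{k}{s}$.

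Finally I would extract the explicit inequality. Using $\binom{k}{s} \leq k^s/s!$ on the right side and $\binom{x}{s} \geq (x - s + 1)^s / s!$ on the left (valid when $x \geq s-1$; the case $|E|/n \leq s-1$ gives the result trivially since then $|E| \leq (s-1)n$), the $s!$ cancels and we obtain
\[
(|E|/n - s + 1)^s \leq (t-1)\, k^s / n.
\]
Taking $s$-th roots and rearranging gives $|E| \leq (t-1)^{1/s} k n^{1-1/s} + (s-1)n$, as claimed. The only step requiring any care is the convexity estimate and the separation into cases $|E|/n \geq s-1$ versus $|E|/n < s-1$; everything else is routine algebra.
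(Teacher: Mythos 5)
Your proof is correct; it is the classical K\H{o}v\'ari--S\'os--Tur\'an double-counting argument, with constants tracked so as to yield exactly the stated bound. The paper does not prove this statement but merely cites it from \cite{H}, so there is no in-paper argument to compare against; your derivation (upper-bounding the common-neighbor count $N$ by $(t-1)\binom{k}{s}$, lower-bounding it by $n\binom{|E|/n}{s}$ via convexity of the extended binomial, and then using $\binom{k}{s}\leq k^s/s!$ and $\binom{x}{s}\geq (x-s+1)^s/s!$ for $x\geq s-1$, with the trivial case $|E|/n< s-1$ handled separately) is the standard one and reproduces the exact constants.
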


The final ingredient is a result from \cite{ST}.
% It is an immediate consequence of \cite[Theorem 1.1]{Barba}.
\begin{thm}\cite[Theorem A.2]{ST} \label{conncomps}
Let $V \subset \R^d$ be an irreducible variety of dimension $h$ and degree $D$ and let $f \in \R[x_1,...,x_d]$ be a polynomial of degree $M \geq 1$. Then $V \setminus Z(f)$ has at most $O_D( M^h)$ connected components.
\end{thm}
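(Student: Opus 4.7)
The plan is to obtain the bound via a dimension-sensitive Milnor--Thom estimate. First I would write $V \setminus Z(f) = (V \cap \{f > 0\}) \sqcup (V \cap \{f < 0\})$, reducing the problem to bounding the number of connected components of the semi-algebraic set $S_+ := V \cap \{f > 0\}$ (the argument for $S_-$ being identical). Using the dictionary between degree and complexity recalled just before the theorem, the irreducible variety $V$ can be realised as the common zero set of polynomials $g_1, \ldots, g_r$ with $r = O_D(1)$ and $\deg(g_i) = O_D(1)$. Thus $S_+$ is a semi-algebraic set defined by $O_D(1)$ polynomials of degree at most $\max(M, O_D(1))$.

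Next I would apply a refined Milnor--Thom bound that exploits the fact that $S_+$ lies inside the $h$-dimensional variety $V$, rather than in all of $\R^d$. The classical Oleinik--Petrovsky--Milnor--Thom bound only yields $O((DM)^d)$ components, which is far too weak. A clean route is the Basu--Pollack--Roy framework, which bounds the sum of Betti numbers of a semi-algebraic set confined to an $h$-dimensional variety by $O((sN)^h)$, for $s$ defining polynomials of degree at most $N$; inserting $s = O_D(1)$ and $N \leq M + O_D(1)$ yields the desired $O_D(M^h)$.

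An alternative, more elementary route uses Morse theory combined with Bezout's theorem. After reducing to the smooth case (the singular locus $V_{\mathrm{sing}}$ has dimension at most $h-1$ and contributes $O_D(M^{h-1})$ components by induction on $h$), each bounded connected component of $S_+$ must contain at least one critical point of $f|_V$, where $f$ attains its local extremum on that component. Non-compactness is handled by intersecting with a large sphere and absorbing the resulting boundary components into a lower-dimensional bound, again inductively. The critical points of $f|_V$ form a $0$-dimensional subvariety cut out by the Lagrange multiplier equations $\nabla f = \sum_i \lambda_i \nabla g_i$ together with $g_i = 0$, and a Bezout-type count taking into account the $h$-dimensionality of $V$ yields at most $O_D(M^h)$ such critical points.

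The hard part is genuinely extracting the $M^h$ (rather than $M^d$) dependence. This requires either the refined Betti-number machinery of Basu--Pollack--Roy, whose proof itself proceeds by a careful deformation and projection argument, or a delicate Bezout count on the Lagrange multiplier system in which only $h$ of the ambient dimensions contribute polynomially in $M$. In both approaches one must carefully handle non-compact and singular components and ensure that the required generic choices (of projection or of linear perturbation) are available.
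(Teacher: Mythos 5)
This statement is not proved in the paper at all: it is a black box, cited verbatim as Theorem A.2 of Solymosi–Tao \cite{ST}, and nothing in the appendix or elsewhere attempts a proof. So there is no ``paper's own proof'' for your proposal to be compared against; you would need to compare against the argument in \cite[Appendix A]{ST} itself.

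Against that benchmark, your sketch is in the right circle of ideas but stops short of being a proof, and you say so yourself (``the hard part is genuinely extracting the $M^h$ ... dependence''). That is precisely the content of the theorem, so a proposal which treats it as an acknowledged difficulty rather than a resolved step is incomplete. A few concrete remarks. Your second route (Morse/critical-point counting plus a Bezout estimate, inducting on the dimension of $V$ by passing to the singular locus) is the one closest in spirit to what Solymosi–Tao actually do: they reduce to the smooth, irreducible, equidimensional case and then control components via a projection/critical-point argument whose Bezout count has the right exponent $h$ because it lives on an $h$-dimensional variety whose own defining polynomials have degree $O_D(1)$. Your first route (invoking a Basu–Pollack–Roy style Betti-number bound on a variety) can also work, but you must cite a version that separates the degree of the sign-condition polynomial $f$ from the degrees of the polynomials cutting out $V$; the crude form of those bounds carries a factor $M^d$ from an algebraic deformation step, and you would need the refined statement in which the exponent tracking $\deg f$ is the dimension of $V$, not the ambient dimension. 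Finally, your decomposition $V\setminus Z(f)=\{f>0\}\sqcup\{f<0\}$ and the degree-to-complexity dictionary are both fine and match the framework set up just before the theorem. In short: correct type of argument, but not a proof, and since the paper does not prove the theorem either, the appropriate thing to do is cite \cite{ST} as the paper does.
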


{\bf Sketch of the proof of Theorem \ref{genipv2}.} The inequality (\ref{gentinc2}) is established by means of a two-step induction argument on the quantities $e$ and $k+n$.

{\bf Base cases.} When $e=0$, the irreducible variety $X$ consists of a single point and hence \eqref{gentinc2} is easily satisfied. On the other hand, when $k+n$ is small (regardless of the dimension $e$) we can choose $c_1(e), c_2(e)$ to be sufficiently large, thereby satisfying \eqref{gentinc2}. 

Let $r$ be a large number to be determined later. By the polynomial partitioning method (in the modified form \cite[Theorem 4.2]{Zar}) there exists a polynomial $f \in \R[x_1,...,x_d] \setminus I(X)$ of degree at most $O_{d,D}(r^{1/e})$ so that each connected component of $\R^d \setminus Z(f)$ contains at most  $k/r$ points of $\mathcal{P}$.\\
Defining $X_f:=X \cap Z(f)$, we may now split the set of incidences $\mathcal{I} \subset \mathcal{P} \times \mathcal{V}$ into three parts (recall that $X$ is not contained in any $V \in \mathcal{V}$):
\begin{itemize}
\item[-] $\mathcal{I}_1$ is given by those $(p,V) \in \mathcal{I}$ for which $p \in Z(f)$ and $V$ properly intersects each irreducible component of $X_f$ that contains $p$.  
\item[-] $\mathcal{I}_2$ is given by those  $(p,V) \in \mathcal{I}$ for which $p$ lives in some irreducible component of $X_f$ that contains $V$.
\item[-] $\mathcal{I}_3$ consists of all remaining incidences, i.e. those  $(p,V) \in \mathcal{I}$ with $p \notin X_f$.
\end{itemize}  

Throughout the remainder of the argument we may assume that $n \leq k^s$. Indeed, when $k^s <n$ we get that $n^{-1/s} < k^{-1}$ and hence by Theorem \ref{KST}   
$$I(\mathcal{P},\mathcal{V} )\leq  t^{1/s} k n^{1-1/s} +sn \leq (t+s) n,$$
yielding the desired estimate (\ref{gentinc2}). We record the inequality
\begin{equation}\label{nmn}
n = n^{\frac{e-1}{es-1}}   n^{\frac{e(s-1) }{es-1}} \leq k^{\frac{s(e-1)}{es-1}}  n^{\frac{e(s-1) }{es-1}}.
\end{equation}

Let $k_f=|\mathcal{P} \cap X_f|$. Since $X$ is irreducible of dimension $e$ and $f \notin I(X)$ we have that $\dim(X_f)=:e'\leq e-1$. By \cite[Theorem 2]{royvor} (note that the results in \cite{royvor} are described in terms of both complexity and degree) and the discussion preceding Theorem \ref{genipv2} we can decompose $X_f$ into $l$ irreducible varieties, each of degree at most $l$ and dimension at most $e'$. Here $l$ depends on the quantities $d,D,e,r$. 
\newline {\bf A bound for $\mathcal{I}_1$.} Applying the induction hypothesis it follows that
\begin{equation}\label{i11}
|\mathcal{I}_1| \leq st l \left(  c_1(e-1) k^{ \frac{s(e-2)}{(e-1)s-1}+\varepsilon  } n^{\frac{(e-1)(s-1) }{(e-1)s-1}  } + c_2(e-1)(k_f+n) \right).
\end{equation} 
Invoking the estimate \cite[(12)]{Zar} one has
\begin{equation}\label{algmanip}
k^{ \frac{s(e-2)}{(e-1)s-1}+\varepsilon  } n^{\frac{(e-1)(s-1) }{(e-1)s-1}  } \leq k^{ \frac{s(e-1)}{es-1}+\varepsilon  } n^{\frac{e(s-1) }{es-1}  }.
\end{equation}  
Choosing $c_1(e) \geq 3 l c_1(e-1), c_2(e) \geq 3 l c_2(e-1)$ and inserting \eqref{algmanip} into \eqref{i11} we get
\begin{equation}\label{i1f}
|\mathcal{I}_1| \leq \frac{st}{3} \left(  c_1(e) k^{ \frac{s(e-1)}{es-1}+\varepsilon  } n^{\frac{e(s-1) }{es-1}  } + c_2(e)(k_f+n) \right).
\end{equation}

{\bf A bound for $\mathcal{I}_2$.} Since the incidence graph of $\mathcal{P} \times \mathcal{V}$ is $K_{s,t}$-free, each irreducible component of $X_f$ either contains at most $s-1$ points from $\mathcal{P}$ or is contained in at most $t-1$ varieties belonging to $\mathcal{V}$. As a result $|\mathcal{I}_2| \leq l s n+ t k_f$. Since we have already assumed $c_2(e) \geq 3l$, it follows that
\begin{equation}\label{i2f}
|\mathcal{I}_2| \leq \frac{st}{3} c_2(e)( k_f + n ).
\end{equation}

{\bf A bound for $\mathcal{I}_3$.} Let $k'_f:=k-k_f$. By Theorem \ref{conncomps} the set $\R^d \setminus Z(f)$ can be partitioned into connected components (or \textsl{cells}) $\Omega_1,...,\Omega_J$ with $J =O_{D,d}( r) $. Given any $(p,V) \in \mathcal{I}_3$ we first note that the variety $V$ must properly intersect one of the cells $\Omega_j$.\\
Among the $O(1)$ polynomials defining $V$ there must be at least one, say $g$, for which $Z(g)$ does not fully contain $X$. Since $X$ is irreducible we have that $\dim(Z(g) \cap X ) \leq e-1$ and hence, by Theorem \ref{conncomps} (and the discussion preceding Theorem \ref{genipv2}), the variety $Z(g) \cap X$ intersects at most $O_{D,d}(r^{(e-1)/e  })$ cells. As a consequence $V \cap X$ intersects at most $O_{D,d}(r^{(e-1)/e  })$ cells. Introduce for each $j=1,...,J$
$$\mathcal{V}_{X}^{(j)}=\left\{V\cap X |\ V \in \mathcal{V} \text{ and } V \text{ intersects } \Omega_j  \right\}, \qquad \mathcal{P}^{(j)}=\mathcal{P} \cap \Omega_j.$$
Noting that 
$$|\mathcal{I}_3| =\sum_{j=1}^{J} I\left(\mathcal{P}^{(j)}, \mathcal{V}_{X}^{(j)} \right),$$
the  argument proceeds in precisely the same manner as \cite[Theorem 4.3]{Zar} and one gets
$$\sum_{j=1}^{J} I\left(\mathcal{P}^{(j)}, \mathcal{V}_{X}^{(j)} \right) \leq st  \left(  c_1(e) \kappa_1 r^{- \varepsilon}  k^{ \frac{s(e-1)}{es-1}+\varepsilon}  n^{\frac{e(s-1) }{es-1} }   + c_2(e)(k'_f+ \kappa_2 r^{(e-1)/e }n) \right),$$  

\noindent where $\kappa_1, \kappa_2$ depend only on the parameters $D,d,e$. Taking $\kappa_2 c_2(e)r^{(e-1)/e } \leq c_1(e)/4$ and choosing $r$ to be sufficiently large with respect to $\varepsilon$ and $\kappa_1$, it follows from \eqref{nmn} that 
\begin{equation}\label{i3f}
|\mathcal{I}_3| =\sum_{j=1}^{J} I \left(\mathcal{P}^{(j)}, \mathcal{V}_{X}^{(j)} \right) \leq  st   \left(  \frac{c_1(e) }{3}  k^{ \frac{s(e-1)}{es-1}+\varepsilon}  n^{\frac{e(s-1) }{es-1} }   + c_2(e)k'_f \right).\end{equation}
It remains to collect the three estimates \eqref{i1f}, \eqref{i2f} and \eqref{i3f}.

%\begin{thebibliography}{000}

%\bibitem{BB} E. Bombieri and J. Bourgain, \textsl{A problem on sums of two square} , Int. Math.Res. Not. {\bf 11}  (2015), 3343-–3407.
%\bibitem{B} J. Bourgain, \textsl{On toral eigenfunctions and the random wave model}, Israel J. Math.201(2014), {\bf 2}, 611–-630.
%\bibitem{Cox} D. Cox, \textsl{Primes of the Form $x^2+ny^2$: Fermat, Class Field Theory, and Complex Multiplication}, Wiley (1997).
%\bibitem{Zar} J. Fox, J. Pach, A. Sheffer, A. Suk and J. Zahl, \textsl{A semi-algebraic version of Zarankiewicz's problem} , (2015) arXiv:1407.5705v2
%\bibitem{Jar} V. Jarn\'ik,  \textsl{Uber die Gitterpunkte auf konvexen Kurven}. Math. Z., {\bf 24}(1), (1926), 500–-518.
%\bibitem{KST} P. K˝ov´ari, V. S´os, and P. Tur´an, \textsl{On a problem of Zarankiewicz}, Colloq. Math. 3, 50–-57.

%\bibitem{Pall} G. Pall, \textsl{On rational automorphs of binary quadratic forms}, Bull. Amer. Math. Soc.  {\bf 42}, Number 8 (1936), 541--544.
%\bibitem{Pall2} G. Pall, \textsl{Quaternions and sums of three squares}, Amer. J. Math. {\bf 64}, (1942). 503--513.

%\end{thebibliography}

%\addcontentsline{toc}{section}{References}

\bibliographystyle{plain}
\bibliography{bibfile}

\end{document}